\newcommand{\nvbzero}[2]{
\mathbf{0}^{#1}_{#2}
}
\newcommand{\dvtimes}[2]{
\stackrel[{\scriptscriptstyle #2}]{{\scriptscriptstyle #1}}{\cdot}
}
\newcommand{\dvplus}[2]{
\stackrel[{\scriptscriptstyle #2}]{{\scriptscriptstyle #1}}{+}
}
\newcommand{\dvminus}[2]{
\stackrel[{\scriptscriptstyle #2}]{{\scriptscriptstyle #1}}{-}
}
\newcommand{\infVB}{\infty\text{-VB}}
\newcommand{\lie}[1]{\mathfrak{#1}}
\DeclareMathOperator {\pr}    {pr}
\DeclareMathOperator {\rk}    {rk}
\DeclareMathOperator {\Hom}   {Hom}
\DeclareMathOperator {\id}    {id}
\DeclareMathOperator {\Mor}    {Mor}
\newcommand{\upp}{{\mathbf{up}}}
\newcommand{\low}{{\mathbf{low}}}
\newcommand{\R}{\mathbb R}
\newtheorem{thm}{Theorem}[section]
\newtheorem{mydef}[thm]{Definition}
\newtheorem{lemma}[thm]{Lemma}
\newtheorem{cor}[thm]{Corollary}
\newtheorem{prop}[thm]{Proposition}
\newtheorem{rmk}[thm]{Remark}
\newtheorem{example}[thm]{Example}
\newcommand{\inv}{^{-1}}
\newcommand{\nset}{{\underline{n}}}
\newcommand{\N}{\mathbb N}
\newcommand{\E}{\mathbb E}
\newcommand{\F}{\mathbb F}
\newcommand{\A}{\mathcal A}
\renewcommand{\S}{\mathcal S}
\renewcommand{\P}{\mathbb P}
\newcommand{\Man}{{\mathbf{Man}^\infty}}
\newcommand{\an}{\arrowvert_}
\newcommand{\nok}{\setminus\{k\}}
\title{Multiple vector bundles: cores, splittings and decompositions}
\author{M.~Heuer}
\address{School of Mathematics and Statistics,
  The University of Sheffield}
\email{maheuer1@sheffield.ac.uk}
\author{M.~Jotz Lean}
\address{Mathematisches Institut, Georg-August Universit\"at G\"ottingen}
\email{madeleine.jotz-lean@mathematik.uni-goettingen.de}
\begin{document}
\maketitle

\begin{abstract}
  This paper introduces $\infty$- and $n$-fold vector bundles as
  special functors from the $\infty$- and $n$-cube categories to the
  category of smooth manifolds. We study the cores and ``n-pullbacks'' of
  $n$-fold vector bundles and we prove that any $n$-fold vector bundle
  admits a non-canonical isomorphism to a decomposed $n$-fold vector
  bundle. A colimit argument then shows that $\infty$-fold vector
  bundles admit as well non-canonical decompositions. For the
  convenience of the reader, the case of triple vector bundles is
  discussed in detail.
\end{abstract}
\tableofcontents

\section{Introduction}
Double vector bundles were introduced by Pradines \cite{Pradines77} as
a structural tool in his study of nonholonomic jets. Since then,
double vector bundles have been used e.g.~in integration problems in
Poisson geometry \cite{MaXu00, BuCaOr09, JoOr14, BuCaHo16, Jotz19},
and Pradines' symmetric double vector bundles (with inverse symmetry)
have turned out to be equivalent to graded manifolds of degree $2$
\cite{Jotz18b}. Pradines' original definition was in terms of double
vector bundle charts  \cite{Pradines77}:

\emph{
 Let $M$ be a smooth manifold and $D$ a topological space
  with a map $\Pi\colon D\to M$. A \textbf{double vector
    bundle chart} is a quintuple $c=(U,\Theta,V_1,V_2,V_0)$, where $U$
  is an open set in $M$, $V_1,V_2,V_3$ are three (finite dimensional)
  vector spaces and $\Theta\colon \Pi\inv(U)\to U\times V_1\times
  V_2\times V_0$ is a homeomorphism such that $\Pi=\pr_1\circ\Theta$.}

 \emph{ Two smooth double vector bundle charts $c$ and $c'$ are
  \textbf{smoothly compatible} if $V_i=V_i'$ for $i=0,1,2$ and 
	the ``change of chart''
  $\Theta'\circ\Theta\inv$ over $U\cap U'$ has the form
\[(x,v_1,v_2,v_0)\mapsto (x,\rho_1(x)v_1,\rho_2(x)v_2,\rho_0(x)v_0+\omega(x)(v_1,v_2))
\]
with $x\in U\cap U'$, $v_i\in V_i$, $\rho_i\in C^\infty(U\cap U',
\operatorname{Gl}(V_i))$ for $i=0,1,2$ and $\omega\in
C^\infty(U\cap U',\operatorname{Hom}(V_1\otimes V_2,V_0))$.
}
\emph{A \textbf{smooth double vector bundle atlas} $\mathfrak{A}$ on $D$
is a set of double vector bundle charts of $D$ that are
pairwise smoothly compatible and such that the family of underlying open
sets in $M$ covers $M$. 
A (smooth) double vector bundle structure on $D$ is a maximal
smooth double vector bundle atlas on $D$.}

A double vector bundle consists then of a smooth manifold $D$,
together with vector bundle structures $D\rightarrow A_1$,
$D\rightarrow A_2$, $A_1\rightarrow M$, $A_2\rightarrow M$:
\[
\begin{tikzcd}
D\ar[r,"p^D_{A_1}"] \ar[d,"p^D_{A_2}"'] & A_1 \ar[d,"q_1"]\\
A_2 \ar[r,"q_2"] & M
\end{tikzcd} 
\,,
\]
such that the structure maps (bundle projection, addition, scalar
multiplication and zero section) of $D$ over $A$ are vector bundle
morphisms over the corresponding structure maps of $B\rightarrow M$
and the other way around.
Equivalently,  the condition that each addition in
$D$ is a morphism with respect to the other is exactly
\begin{equation}\label{add_add} (d_1+_{A_1}d_2)+_{A_2}(d_3+_{A_1}d_4)=(d_1+_{A_2}d_3)+_{A_1}(d_2+_{A_2}d_4)
\end{equation}
for $d_1,d_2,d_3,d_4\in D$ with $p^D_{A_1}(d_1)=p^D_{A_1}(d_2)$,
$p^D_{A_1}(d_3)=p^D_{A_1}(d_4)$ and $p^D_{A_2}(d_1)=p^D_{A_2}(d_3)$,
$p^D_{A_2}(d_2)=p^D_{A_2}(d_4)$.  This is today's usual definition of a double
vector bundle; which has been used since \cite{Mackenzie92}.  It is
easy to see that a double vector bundle following Pradines' definition
is a double vector bundle in the ``modern'' sense \cite{Pradines77},
but the converse is more difficult to see. Pradines' double vector
bundle charts are equivalent to \emph{local linear splittings} of
today's double vector bundles. Let us be more precise.

Given three vector bundles $A$, $B$ and $C$ over $M$ with
respective vector bundle projections $q_A$, $q_B$ and $q_C$, the space
\[A\times_MB\times_MC\simeq q_A^!(B\oplus C)\simeq q_B^!(A\oplus C)\]
has two vector bundle structures, one over $A$, and one over
$B$. These two vector bundle structures are compatible in the sense of
both definitions above. Such a double vector bundle is called a
\textbf{decomposed} double vector bundle, with sides $A$ and $B$ and
with ``core'' $C$.  In particular, if $C$ is the trivial vector bundle
$M$ over $M$, we get the ``vacant'' double vector bundle $A\times_MB$
\cite{Mackenzie92}.  A (local) \textbf{linear splitting} of a double
vector bundle $(D;A,B;M)$ is an injective morphism of double vector
bundles
\[\Sigma_U\colon A\an{U} \times_{U} B\an{U}\rightarrow (q_B\circ p^D_B)^{-1}(U)\,,\]
over the identity on the sides $A\an{U}$ and $B\an{U}$, where
$U\subseteq M$ is an open subset. A (local) \textbf{decomposition} of
$(D;A,B;M)$  with core $C$ is an isomorphism of double vector bundles
\[\mathcal{S}_U\colon A\an{U}\times_{U}B\an{U}\times_{U} C\an{U}
\rightarrow (q_B\circ p^D_B)^{-1}(U),\]
which is the identity on the sides and on the core. Linear splittings
are equivalent to decompositions; and a local decomposition of $D$ as
above with the open set $U$ trivialising simultaneously $A$, $B$ and
$C$ gives a smooth double vector bundle chart of $D$, defined by
$\Theta\colon (q_B\circ p^D_B)^{-1}(U)\to U\times \mathbb R^a\times\mathbb
R^b\times \mathbb R^c$;
\[ \Theta=(\pr_U,\phi_A, \phi_B,\phi_C)\circ (\mathcal S_U)^{-1},
\]
where $a$, $b$, $c$ are the ranks of $A$, $B$, $C$, respectively and
$\phi_A\colon q_A^{-1}(U)\to U\times \mathbb R^a$ is the
trivialisation of $A$ over $U$, etc.

\medskip

Starting with the definition from \cite{Mackenzie92}, it was until
recently not known how to show the existence of local double vector
bundle charts, or equivalently of local linear splittings. In fact,
Mackenzie later added the existence of a global splitting to his
definition of a double vector bundle, and also of triple vector
bundles (see e.g.~\cite[Definition 1]{Mackenzie11}, \cite{GrMa09},
\cite{FlMa18}).  It turns out that Mackenzie's additional condition in
his definition is redundant. The existence of local splittings for the
above definition of double vector bundles has been mentioned at
several places \cite{GrMe10a,GrRo09}, but the first elementary
construction was given by Fernando del Carpio-Marek in his thesis
\cite{delCarpio-Marek15}, starting from the hypothesis that the double
projection $(p^D_A,p^D_B)\colon D\to A\times_M B$ of a double vector
bundle is a surjective submersion.

Note here that in \cite{Pradines77}, Pradines pasted local
decompositions together with a partition of unity, in order to get a
global decomposition (see in our proof of Theorem \ref{thm_dec_n_vb} below). In
other words, the existence of local decompositions is equivalent to
the existence of a global linear splitting or decomposition.

\medskip We will explain below (in Section \ref{2-case}) how to deduce
very easily from the surjectivity of the double projection
$(p^D_A,p^D_B)\colon D\to A\times_M B$ the existence of a global
splitting. This surjectivity, that is sometimes also assumed as part
of the definition of a double vector bundle (this is e.g.~done
explicitly in a former version of \cite{Mackenzie11} that can be found
on arXiv.org, and implicitly in \cite{delCarpio-Marek15}), is in fact
always ensured by Lemma \ref{lise} below (see also Remark
\ref{rem_surj_double_proj}).  Although we find a more elegant proof of
the existence of global splittings of double vector bundles than the
one in \cite{delCarpio-Marek15}, it turns out that the method there is
easier to understand and more elementary in the case of a general
$n$-fold vector bundle.  Our first goal in this project was to build
on del Carpio-Marek's method in order to construct local splittings of
triple vector bundles. It was then natural to adapt our proof to the
construction of local linear splittings of $n$-fold vector bundles;
and we found that a colimit argument yields the existence of global
linear decompositions for $\infty$-fold vector bundles as well.

Let us mention here that Eckhard Meinrenken showed us recently a
beautiful construction of global linear splittings of double vector
bundles using the normal functor, and an interesting alternative proof
to the submersive surjectivity of the double projection \cite{LiSe11},
using the commuting scalar multiplications of a double vector bundle.
\medskip

In this paper, we introduce multiple vector bundles \cite{GrMa12} as
special functors from hypercube categories to smooth manifold, such
that generating arrows are sent to vector bundle projections, and
elementary squares to double vector bundles. In particular, we define
$\infty$-fold vector bundles as such functors from the infinite
hypercube category. We study in great detail the cores of multiple
vector bundles and find on them rich structure of multiple vector
bundles as well. We define the $n$-pullback of an $n$-fold vector 
bundle and the surjective submersion onto it -- in the case of a 
double vector bundle, this is the surjectivity of
$(p^D_A,p^D_B)\colon D\to A\times_M B$ -- and most importantly we
prove by induction over $n$ that each $n$-fold vector bundle admits
local splittings and therefore a non-canonical global decomposition.

\medskip

$n$-fold vector bundles were previously defined in \cite{GrMa12},
\cite{GrRo09}. It is not difficult to see that the definitions are the
same: Gracia-Saz and Mackenzie's $n$-fold vector bundles are smooth
manifolds with $n$ ``commuting'' vector bundle structures in the sense
that all squares are double vector bundles, and Grabowski and
Rotkiewicz's are smooth manifolds with $n$ commuting scalar
multiplications.  Grabowski and Rotkiewicz sketch in \cite{GrRo09} a
proof of global splittings of their $n$-fold vector bundles. Our
construction is more precise since it explains all the multiple core
and their roles in the decomposition; and most importantly it gives
the decompositions of $\infty$-fold vector bundles with a colimit
construction. Our definition of multiple vector bundles as special
functors from cube categories to manifolds allows us to work with
$n$-fold vector bundles without giving a central role to the total
space -- an $\infty$-fold vector bundle cannot be defined as a smooth
manifold with infinitely many commuting scalar multiplications!

\subsection{Outline of the paper}
In the next section \ref{2-case} we explain for the convenience of the
reader how to prove that double vector bundles admit linear
decompositions.

In Section \ref{sec_multiple_def} we define multiple vector
bundles. We construct their pullbacks (Section \ref{sec_pullback}) and
we explain the rich structure on the different cores of multiple
vector bundles (Section \ref{cores}).

In Section \ref{sec_ex_splitting} we define linear splittings and
decompositions of $n$-fold vector bundles. We explain how the two
notions are essentially equivalent (Section \ref{splitdecnvb}) and 
we prove the existence of local splittings of a given $n$-fold vector 
bundle (Section \ref{ex_splitting}). We deduce the existence of global
decompositions of $n$-fold vector bundles and we explain how $n$-fold
vector bundles can alternatively be defined as smooth manifolds with
an atlas of compatible $n$-fold vector bundle charts (Section
\ref{sec_atlas}).

In Section \ref{sec_infty} we prove that each $\infty$-fold vector
bundle admits a linear decomposition. 
Finally in Section \ref{sec_triple} we explain for the convenience of
the reader most of our constructions and results in the case of a
triple vector bundle. In that special case, we explain the relation
between linear splittings and multiple linear sections. 

\subsection{Relation with other work}

We heard after having mostly completed this work that the content of
Theorem \ref{nPullback} for $n=3$ can be found as well in the recent
paper \cite{FlMa18}; unfortunately the proof given there
has some errors.

Some of our results on cores in Section \ref{cores} seem to be known
in \cite{GrMa12}, but they are not central in that paper so not
precisely formulated and proved. The cores of triple vector bundles
can also be found in \cite{FlMa18} and \cite{Mackenzie05b} -- our
proof of Theorem \ref{corenvb} relies on the fact that the side cores
of a triple vector bundle are double vector bundles
\cite{Mackenzie05b}.

\subsection{Acknowledgements}

We warmly thank Rohan Jotz Lean for useful comments, and Sam Morgan
for telling us about the technique used in \cite{LiSe11} for proving
that the double source map of a VB-groupoid is a surjective
submersion (used in our proof of Theorem \ref{nPullback}).

\subsection{Preparation: on linear splittings of double vector
  bundles}\label{2-case}
Let $(D,A,B,M)$ be a double vector bundle with core $C$. That is, the
space $C$ is the double kernel
$C=\{d\in D\, \mid \, p^D_A(d)=0^A_m, \quad p^D_B(d)=0^B_m\quad \text{
  for some } m\in M\}$.  It has a natural vector bundle structure over
$M$ since $+_A$ and $+_B$ of two elements of $C$ coincide by the
interchange law \eqref{add_add}, see \eqref{computation_core_add} below.

The additional axiom that the double projection
$(p^D_A,p^D_B)\colon D\to A\times_M B$ is a surjective submersion is
sometimes added to the definition.  We explain in Theorem \ref{nPullback}, see
also Remark \ref{rem_surj_double_proj}, why this additional axiom is
not needed \cite{LiSe11}.  The surjectivity of $(p^D_A,p^D_B)$ yields the exactness
of the sequence
\begin{equation}\label{core-seq}
0 \longrightarrow q_B^!C \overset{\iota_B}{\longrightarrow} D\overset{(p^D_A,p^D_B)}{\longrightarrow} q_B^!A\to 0
\end{equation}
of vector bundles over $B$.  The map $\iota_B\colon q_B^!C \to D$ is
the core inclusion over $B$; sending $(b,c)$ to $0^D_b+_Ac$. Its image
are precisely the elements of $D$ that project under $p^D_A$ to zero
elements of $A$.

A section $\xi\in\Gamma_A(D)$ is \textbf{linear} over a section
$b\in\Gamma(B)$ if the map $\xi\colon A\to D$ is a vector bundle
morphism over the base map $b\colon M\to B$. The space
$\Gamma_A^\ell(D)$ of linear sections of $D\to A$ is a
$C^\infty(M)$-module since for $\xi\in\Gamma^\ell_A(D)$ linear over
$b\in\Gamma(B)$ and for $f\in C^\infty(M)$, the section
$q_A^*f\cdot \xi$ is linear over $fb$. We get a morphism
$\pi\colon \Gamma^\ell_A(D)\to\Gamma(B)$ of $C^\infty(M)$-modules,
sending a linear section to its base section. If a linear section
$\xi\in\Gamma^\ell_A(D)$ has the zero section $0^B\in\Gamma(B)$ as its
base section, then for all $a_m\in A$,
$D\ni \xi(a_m)=0^D_{a_m}+_B\varphi(a_m)$ for some
$\varphi(a_m)\in C(m)$. The linearity of $\xi$ implies that
$\varphi\in\Gamma(A^*\otimes C)$.  We denote then $\xi$ by
$\widetilde{\varphi}$, and we get the map
$\widetilde\cdot\colon
\Gamma(\operatorname{Hom}(A,C))\to\Gamma^\ell_A(D)$
that sends $\phi$ to $\widetilde \phi\in\Gamma_A^\ell(D)$ defined by
$\widetilde\phi(a)=0^D_{a}+_B\phi(a)$ for all $a \in A$.

A splitting $s\colon q_B^!A\to D$ of \eqref{core-seq} lets us define for
every $b\in\Gamma(B)$ a section $\hat{b}$ of $D\to A$, given by 
$\hat{b}(a_m)=s(a_m,b(m))$ for all $a_m\in A$. We get then immediately
$p^D_B\circ \hat{b}=b\circ q_A\colon M\to B$ and 
\[ \hat{b}(a_m^1+a_m^2)=s(a^1_m+a^2_m,b(m))=s(a^1_m,b(m))+_Bs(a^2_m,b(m))=\hat{b}(a^1_m)+_B\hat{b}(a^2_m),
\]
i.e.
$\hat{b}\colon  A\to D$ is a vector bundle morphism over $b\colon M\to B$.
In other words, $\hat{b}$ is an element of $\Gamma^\ell_A(D)$.
Therefore, the third arrow in
\begin{equation}\label{fat_sequence_modules}
  0\longrightarrow \Gamma(\operatorname{Hom}(A,C))\overset{\widetilde\cdot}{\longrightarrow} \Gamma^\ell_A(D)\longrightarrow\Gamma(B)\longrightarrow 0
\end{equation}
is surjective and the short sequence of $C^\infty(M)$-modules is
exact. Then, since $\Gamma(\operatorname{Hom}(A,C))$ and $\Gamma(B)$
are locally free and finitely generated, $\Gamma^\ell_A(D)$ is as well
and there exists
a splitting $h\colon \Gamma(B)\to \Gamma_A^\ell(D)$ of 
\eqref{fat_sequence_modules}. Then $h$ defines a linear splitting
$\Sigma_h\colon A\times_M B\to D$, $\Sigma_h(a_m,b_m)=h(b)(a_m)$ for
any $b\in \Gamma(B)$ with $b(m)=b_m$.  
Since $h$ is $C^\infty(M)$-linear, it is easy to see that $\Sigma_h$
is well-defined, i.e.~that it does not depend on the choice of the
sections of $B$.

Hence we have proved the following theorem.
\begin{thm}
Any double vector bundle $D$ with sides $A$ and $B$ admits a linear
splitting $\Sigma\colon A\times_M B\to D$.
\end{thm}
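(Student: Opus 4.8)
The plan is to produce $\Sigma$ not directly as a map of manifolds, but by first reducing the problem to a short exact sequence of vector bundles over $B$ and then upgrading the splitting of that sequence to one that is $C^\infty(M)$-linear in the base section. First I would record the core exact sequence \eqref{core-seq} of vector bundles over $B$; its surjectivity on the right is precisely the surjectivity of the double projection $(p^D_A,p^D_B)\colon D\to A\times_M B$, which is guaranteed by Lemma \ref{lise}. As a short exact sequence of vector bundles over the manifold $B$, it admits a (non-canonical) splitting, i.e.\ a vector bundle morphism $s\colon q_B^!A\to D$ over $\id_B$ with $(p^D_A,p^D_B)\circ s=\id$.

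The natural temptation is to read off a linear splitting directly from $s$, but this fails: for each $b\in\Gamma(B)$ the section $\hat b(a_m)=s(a_m,b(m))$ is linear over $b$, yet the assignment $b\mapsto\hat b$ need not be $C^\infty(M)$-linear, so the formula $(a_m,b_m)\mapsto\hat b(a_m)$ does not descend to a well-defined map on $A\times_M B$. To remedy this I would pass to the level of sections and work with the $C^\infty(M)$-module $\Gamma^\ell_A(D)$ of linear sections of $D\to A$, which sits in the sequence \eqref{fat_sequence_modules}. Here $\widetilde\cdot$ is injective with image the linear sections lying over $0^B$, identified with $\Gamma(\Hom(A,C))$ through the core and the interchange law \eqref{add_add}, and $\pi$ is the base-section map; exactness in the middle is exactly this identification of $\ker\pi$ with the image of $\widetilde\cdot$, while surjectivity of $\pi$ is what the vector bundle splitting $s$ delivers, since each $\hat b$ is a linear section over $b$.

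The decisive step is then to split \eqref{fat_sequence_modules} by a $C^\infty(M)$-linear map $h\colon\Gamma(B)\to\Gamma^\ell_A(D)$. This is where I would invoke that $\Gamma(\Hom(A,C))$ and $\Gamma(B)$ are finitely generated and projective $C^\infty(M)$-modules, so that the surjection onto the projective quotient $\Gamma(B)$ splits; the point is that $h$ is $C^\infty(M)$-linear by construction. Setting $\Sigma_h(a_m,b_m)=h(b)(a_m)$ for any $b\in\Gamma(B)$ with $b(m)=b_m$, the $C^\infty(M)$-linearity of $h$ makes $\Sigma_h$ independent of the chosen section $b$, hence well-defined, and turns it into a morphism of double vector bundles over the identity on $A$ and on $B$, which is the desired linear splitting.

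I would identify the main obstacle as exactly the gap between a vector bundle splitting over $B$ and a double vector bundle splitting over $M$: the former is cheap (e.g.\ via a partition of unity) but fails $C^\infty(M)$-linearity in $b$, whereas the latter requires the projectivity input on $\Gamma(B)$ and $\Gamma(\Hom(A,C))$. The only other point demanding care is the identification of linear sections over the zero section $0^B$ with $\Gamma(\Hom(A,C))$, which rests on the core description of $D$ and on \eqref{add_add}.
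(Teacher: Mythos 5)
Your proposal is correct and follows essentially the same route as the paper's own proof: pass from a vector bundle splitting of the core sequence \eqref{core-seq} over $B$ to the exact sequence of $C^\infty(M)$-modules \eqref{fat_sequence_modules}, split the latter using the fact that $\Gamma(B)$ (and $\Gamma(\operatorname{Hom}(A,C))$) are finitely generated projective, and read off $\Sigma_h(a_m,b_m)=h(b)(a_m)$, well-defined by $C^\infty(M)$-linearity of $h$. Your observation that the naive construction from $s$ alone fails precisely because $b\mapsto\hat b$ need not be $C^\infty(M)$-linear is a correct articulation of the point the paper's argument is designed to circumvent.
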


Del Carpio-Marek proves in his thesis \cite{delCarpio-Marek15} the
existence of local splittings. His method is the following. Take a
splitting $\sigma\colon q_B! A\to D$ of the short exact sequence
\eqref{core-seq} -- here \cite{delCarpio-Marek15} seems to assume the
surjectivity of the right-hand map as an axiom in the
definition of a double vector bundle. That is, $\sigma$ is a vector
bundle morphism over the identity on $B$. Now choose $U\subseteq M$ an
open set that trivialises both $A$ and $B$ and take the induced local
frames $(a_1,\ldots,a_k)$ and $(b_1,\ldots,b_l)$ of $A$ and $B$ over
$U$.  Then each $b_m\in B\an{U}$ equals
$b_m=\sum_{i=1}^l\beta_i b_i(m)$ with
$\beta_1,\ldots,\beta_l\in\mathbb R$.  Set
$\Sigma_U\colon A\an{U}\times_UB\an{U}\to (q_B\circ p^D_B)^{-1}(U)$,
\[\Sigma_U(a_m,b_m)=\sum_{i=1}^l \beta_i\cdot_ A \sigma(a_m,b_i(m)),
\]
where the sum is taken in the fiber of $D$ over $a_m\in A$. Then
$\Sigma_U$ is a local linear splitting of $D$.

\section{Multiple vector bundles: definition and properties}\label{sec_multiple_def}
In this section we introduce multiple vector bundles and discuss some
of their properties.  The novelty of our definition is that instead of
considering an $n$-fold vector bundle as a smooth manifold with
$n$-commuting vector bundle structures, we see a multiple vector
bundle as a special functor from a cube category to smooth
manifolds. In particular, the ``total space'' of an $n$-fold vector
bundle does not play that central a role anymore, and we can even define
$\infty$-fold vector bundles, with no total space at all.

In the following, we write $\N$ for the set of positive integers:
$\N=\{1,2,\ldots\}$.  For $n\in \N$, we write $\nset$ for the set
$\{1,\ldots,n\}$.

\subsection{Multiple vector bundles}
We consider the category with objects the \emph{finite} subsets
$I\subseteq \N$ and with arrows 
\[ I\to J\quad \Leftrightarrow \quad J\subseteq I\,.
\]
We call this category the \textbf{standard $\infty$-cube category
  $\square^\N$}. It is generated as a category by the arrows
\[ I \to I\setminus\{i\} \quad \text{ for } \quad I\subseteq \N \text{ finite and } i\in I\,.
  \]
  That is, each subset $I\subseteq \N$ of cardinality $k$ is the
  source of $k$ generating arrows.

  In a similar manner, we call the \textbf{standard $n$-cube category
    $\square^n$} the category with subsets $I$ of $\nset$ as
  objects and with arrows
$I\to J\, \Leftrightarrow \, J\subseteq I$.

More generally, an \textbf{$n$-cube category} is
a category that is isomorphic to the standard $n$-cube category
  $\square^n$, while an \textbf{$\infty$-cube category} is
a category that is isomorphic to the standard $\infty$-cube category
  $\square^\N$.  \medskip

\begin{mydef}\label{def_n_fold}
  An \textbf{$\infty$-fold vector bundle}, and respectively an
  \textbf{$n$-fold vector bundle}, is a covariant functor
  $\E\colon \square^{\N}\to \Man$ -- respectively 
a covariant functor
  $\E\colon \square^{n}\to \Man$ --
 to the category of smooth manifolds,
  such that, writing $E_I$ for $\E(I)$ and $p^I_{J}:=\E(I\to J)$,
 \begin{enumerate}
    \item[(a)] for all $I\subseteq \N$ (respectively $I \subseteq \nset$) and all $i\in I$,
  $p^I_{I\setminus\{i\}}\colon E_I\to E_{I\setminus \{i\}}$ has a smooth vector bundle structure, and 
  \item[(b)] for all
  $I\subseteq \N$ (respectively $I\subseteq \nset$) and $i\neq j\in I$,
\[
\begin{tikzcd}
E_{I}\ar[rr,"p^{I}_{I\setminus\{i\}}"] \ar[d,"p^{I}_{I\setminus\{j\}}"] && E_{I\setminus \{i\}}\ar[d,"p^{I\setminus \{i\}}_{I\setminus\{i,j\}} "]\\
E_{I\setminus \{j\}} \ar[rr,"p^{I\setminus \{j\}}_{I\setminus\{i,j\}}  "] & &E_{I\setminus \{i,j\}} 
\end{tikzcd} 
\]
is a double vector bundle.
\end{enumerate}
\end{mydef}
For better readability we will often write for the vector bundle
projections $p^I_i:=p^I_{I\setminus\{i\}}$ and in the case of an
$n$-fold vector bundle also $p_i:=p^\nset_{\nset\setminus\{i\}}$. The
smooth manifold $E_\emptyset=:M$ will be called the \emph{absolute 
base} of $\E$. If $\E$ is an $n$-fold vector bundle, the smooth
manifold $\E(\nset)=:E$ is called its \emph{total space}. Given a
finite subset $I\subseteq \N$ and $i\in I$, we write
$+_{I\setminus\{i\}}$ for the addition and $\cdot_{I\setminus\{i\}}$
for the scalar multiplication of the vector bundle 
$E_{I}\to E_{I\setminus\{i\}}$. This notation is omissive since it 
only specifies the base space of the vector bundle in the fibers of 
which the addition or scalar multiplication is taken. However, it is 
always clear from the summands or factors which fiber space is considered.

We will generally say \emph{multiple vector bundle} for an $n$-fold or
$\infty$-fold vector bundle, when the dimension of the underlying cube
diagram does not need to be specified. Our definition of $n$-fold vector 
bundles is different but equivalent notation to the definition in \cite{GrMa12}. 

\begin{rmk}
  There is a canonical functor $\pi^n_k\colon\square^n\to\square^k$
  for $k\leq n$ defined by $\pi^n_k(I)=I\cap \underline{k}$ and
  $\pi^n_k(I\to J)=(I\cap \underline{k})\to (J\cap \underline{k})$.
  The canonical functor $\pi^\N_n\colon \square^\N\to\square^n$ is
  defined in the same manner by $\pi^\N_n(I)=I\cap \nset$.
  Furthermore there are inclusion functors of full subcategories
  $\iota^n_k\colon \square^k\to\square^n$ and
  $\iota^\N_n\colon \square^n\to\square^\N$.

Given  a $k$-fold vector bundle $\E:\square^k\to \Man$, the composition
$\E\circ\pi^n_k$ is an $n$-fold vector bundle whereas the composition
$\E\circ\pi^\N_k$ is an $\infty$-fold vector bundle. 

In this light, a standard $n$-fold vector bundle $\E$ can be viewed as
a special case of a standard $\infty$-fold vector bundle
$\E\colon \square^\N\to\Man$ such that additionally
$\E=\E\circ\iota^\N_n\circ\pi^\N_n$:
	\[
	\begin{tikzcd}
	\square^\N \ar[d,"\pi^\N_n"'] \ar[r,"\E"] & \Man \\
	\square^n \ar[r,"\iota^\N_n"'] & \square^\N\ar[u,"\E"]
	\end{tikzcd}
	\,.
	\]
	In other words $\E(I)=\E(I\cap\nset)$ for all
	$I\subseteq\N$ and $\E$ is completely determined by its values on
	all the subsets of $\nset$ already. 
\end{rmk}

We will also more generally call an \textbf{$n$-fold vector bundle} a
functor $\E\colon\lozenge^n\to\Man$, where $\lozenge^n$ is an $n$-cube
category with isomorphism $\mathbf{i}\colon \square^n\to \lozenge^n$,
such that $\E\circ \mathbf i$ is a standard $n$-fold vector bundle.
Similarly, an \textbf{$\infty$-fold vector bundle} is a functor
$\E\colon\lozenge^\N\to \Man$, where $\lozenge^\N$ is an $\infty$-cube
category with isomorphism $\mathbf i\colon\square^\N\to\lozenge^\N$,
such that $\E\circ \mathbf i$ is a standard $\infty$-fold vector
bundle. We need this generality of the definition for the study of the
cores of a multiple vector bundle.

The following proposition is straightforward and its proof is left to the reader. 
  \begin{prop}\label{sub-k-fold}
    Let $\E\colon \square^{\mathbb N}\to \Man$ be a multiple vector
    bundle.
    \begin{enumerate}
    \item[(a)]
			For each pair of subsets $J\subseteq I\subseteq\N$ with $J$ finite, 
			the finite sets $K\subset \N$ such that $J\subseteq K\subseteq I$ 
			form a full subcategory $\lozenge^{I,J}$ of $\square^\N$, 
			which is itself a $(\# I-\# J)$-cube category and the restriction 
			of $\E$ to $\lozenge^{I,J}$ is a 
      $(\# I-\# J)$-fold vector bundle with total space $E_I$ (if $I$ is finite)
      and absolute base $E_J$, denoted by $\E^{I,J}$. We call this the
			$(I,J)$-face of $\E$. 
    \item[(b)] In particular, if $I=\emptyset$ we obtain a $(\# I)$-fold vector bundle 
			$\E^{I,\emptyset}$ with total space $E_I$ and absolute base $M$. 
			We call $\E^{I,\emptyset}$ the $I$-face of $\E$. 
\end{enumerate}
\end{prop}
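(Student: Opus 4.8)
The plan is to prove part~(a) directly and then to read off part~(b) as the special case $J=\emptyset$, where $\#I-\#J=\#I$ and the absolute base becomes $E_\emptyset=M$ (note that the hypothesis stated in~(b) should read $J=\emptyset$ rather than $I=\emptyset$, since the face in question is $\E^{I,\emptyset}$). Throughout I would write $m:=\#I-\#J=\#(I\setminus J)$, which is legitimate because $J\subseteq I$.

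First I would observe that $\lozenge^{I,J}$ is a full subcategory of $\square^\N$ essentially by definition: its objects are the finite sets $K$ with $J\subseteq K\subseteq I$, and one keeps all arrows of $\square^\N$ between them, so fullness and closure under composition are automatic. To see that it is an $m$-cube category I would produce an explicit isomorphism with $\square^m$. Choosing any bijection $\phi\colon I\setminus J\to\underline m$, I would set $\mathbf i\colon\square^m\to\lozenge^{I,J}$, $\mathbf i(L)=J\cup\phi^{-1}(L)$. Since every object $K$ of $\lozenge^{I,J}$ satisfies $J\subseteq K\subseteq I$, the assignment $L\mapsto J\cup\phi^{-1}(L)$ is a bijection on objects, with inverse $K\mapsto\phi(K\setminus J)$. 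It is functorial because $J\subseteq K'$ and $J\subseteq K$ give $K'\subseteq K\Leftrightarrow K'\setminus J\subseteq K\setminus J$, so $\mathbf i$ respects the subset-inclusion arrows; hence $\mathbf i$ is an isomorphism of categories and $\lozenge^{I,J}$ is an $m$-cube category.

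Next I would check that $\E\circ\mathbf i$ satisfies the two axioms of Definition~\ref{def_n_fold}, which is exactly what the generalised notion of an $m$-fold vector bundle on $\lozenge^{I,J}$ demands. Under $\mathbf i$ the generating arrows of $\square^m$ correspond precisely to the arrows $K\to K\setminus\{i\}$ with $i\in K\setminus J$; these are generating arrows of $\square^\N$, so $\E$ sends them to vector bundle projections, establishing axiom~(a). For axiom~(b), each elementary square of $\square^m$ is carried by $\mathbf i$ to the square on $E_K,E_{K\setminus\{i\}},E_{K\setminus\{j\}},E_{K\setminus\{i,j\}}$ determined by two distinct indices $i\neq j\in K\setminus J$; but this is verbatim one of the double vector bundle squares required of $\E$ on $\square^\N$, with $K$ in the role of $I$ in Definition~\ref{def_n_fold}(b), hence already a double vector bundle. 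Therefore the restriction $\E^{I,J}$ of $\E$ to $\lozenge^{I,J}$ is an $m$-fold vector bundle. Its total space, when $I$ is finite, is $\E$ evaluated at the maximal object $I$, namely $E_I$, and its absolute base is $\E$ evaluated at the minimal object $J$, namely $E_J$; specialising to $J=\emptyset$ yields part~(b).

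Since the statement is flagged as straightforward, I do not expect a genuine obstacle: the whole content is the bookkeeping of the isomorphism $\mathbf i$. The one point that needs care — and the step I would write out most explicitly — is the verification that passing to a face imposes no new compatibility conditions, i.e.\ that under $\mathbf i$ the elementary squares of the abstract cube $\square^m$ are matched with squares that already occur among the double vector bundle squares of $\E$ on $\square^\N$, so that axioms~(a) and~(b) are \emph{inherited} rather than freshly required.
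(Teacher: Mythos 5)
Your proposal is correct. The paper offers no proof to compare against---it declares the proposition straightforward and leaves it to the reader---and your argument is exactly the intended one: exhibit an explicit isomorphism $\mathbf i\colon\square^m\to\lozenge^{I,J}$, $L\mapsto J\cup\phi^{-1}(L)$, and observe that under $\mathbf i$ the generating arrows and elementary squares of $\square^m$ land on arrows and squares for which Definition~\ref{def_n_fold} already guarantees the vector bundle and double vector bundle structure, so nothing new needs to be verified. Your correction of the hypothesis in~(b) (it should read $J=\emptyset$, not $I=\emptyset$) is also right. The only point you leave implicit is the case of infinite $I$, which the statement permits (only $J$ is assumed finite, and the parenthetical ``if $I$ is finite'' signals that $E_I$ need not exist): there your argument goes through verbatim with a bijection $\phi\colon I\setminus J\to\N$ and $\square^\N$ in place of $\square^m$, producing an $\infty$-cube category and an $\infty$-fold vector bundle.
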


\medskip 

Given an $\infty$-fold vector bundle
$\E\colon \square^{\N}\to \Man$ and an open subset $U\subseteq M$, we
define the \emph{restriction of $\mathbb E$ to $U$} to be the
$\infty$-fold vector bundle
$\mathbb E\an{U}\colon \square^{\N}\to \Man$,
$\mathbb E\an{U}(I)=\left(p_\emptyset^{I}\right)\inv(U)$ and
$\mathbb E\an{U}(I\to J)=\mathbb E(I\to
J)\an{(p_\emptyset^{I})\inv(U)}\colon
\left(p_\emptyset^{I}\right)\inv(U)\to
\left(p_\emptyset^{J}\right)\inv(U)$.  The absolute base of
$\mathbb E\an{U}$ is $U$.  In the same manner, if
$\E\colon \lozenge^{n}\to \Man$ is an $n$-fold vector bundle, and $U$
an open subset of $M$, then its restriction $\E\an{U}$ to $U$ is an
$n$-fold vector bundle with total space $(p^{\nset}_\emptyset)\inv(U)$
and with absolute base $U$.

\medskip

Now recall that a double vector bundle morphism $(\Psi; \psi_A,
  \psi_B; \psi)$ from $(D_1,A_1,B_1,M_1)$ to $(D_2,A_2,B_2,M_2)$ is a commutative cube
{\small\begin{equation*}
 \begin{tikzcd}
   D_1\ar[rrr, "\Psi"]\ar[rd, 
   ]\ar[dd
   ]& && D_2\ar[rd
   ]\ar[dd]
      &\\
& B_1\ar[dd]\ar[rrr, "\psi_B"]&&&B_2\ar[dd
]\\
A_1\ar[rd
]\ar[rrr, "\psi_A"]&&&A_2\ar[rd
]&\\
&M_1\ar[rrr, "\psi"]&&&M_2
\end{tikzcd}
\end{equation*}}
all thee faces of which are vector bundle morphisms. 
Similarly we define morphisms of multiple vector bundles.

\begin{mydef}
Let $\E\colon \lozenge^{\N}_1\to \Man$ and
$\F\colon \lozenge^{\N}_2\to \Man$ be two multiple
vector bundles. A \textbf{morphism of multiple vector bundles} 
from $\E$ to $\F$ is a natural transformation 
$\tau\colon \E\circ \mathbf{i}_1\to \F\circ\mathbf{i}_2$ 
such that for all objects $I$ of $\square^{\N}$ and for all $i\in I$, 
the commutative diagram
\[
\begin{tikzcd}
E_{\mathbf{i_1}(I)}\ar[r,"\tau(I)"] \ar[d,"p^{\mathbf{i_1}(I)}_{\mathbf{i_1}  (I\setminus\{i\})}"] & F_{\mathbf{i_2} (I)}\ar[d,"p^{\mathbf{i_2} (I)}_{\mathbf{i_2} (I\setminus\{i\})} "]\\
E_{\mathbf{i_1}(I\setminus \{i\})} \ar[r,"\tau(I\setminus\{i\})  "] & F_{\mathbf{i_2}(I\setminus \{i\})} 
\end{tikzcd} 
\]
is a homomorphism of vector bundles. 

Given two $n$-fold vector bundles 
$\E\colon \lozenge_1^n\to \Man$ and
$\F\colon \lozenge_2^n\to \Man$, a 
\textbf{morphism of $n$-fold vector bundles} from $\E$ to $\F$ is a
natural transformation $\tau\colon \E\circ\mathbf{i_1}\to \F\circ\mathbf{i_2}$ 
such that the diagram above is a vector bundle homomorphism for all
$I\subseteq \nset$ and $i\in I$. The morphism $\tau$ is surjective 
(resp. injective) if each of its components $\tau(I)$, $I\subseteq \nset$ 
is surjective (resp. injective).
\end{mydef}

\subsection{Prototypes}\label{dec_n_fold}
In this section, we describe a few standard examples of multiple
vector bundles, that will be relevant in the formulation of our main
theorem.

\subsubsection{\textbf{Decomposed multiple and $n$-fold vector bundles}}
Consider a smooth manifold $M$ and a collection of vector bundles
  $\mathcal A=(q_J\colon A_J\to M)_{J\subseteq \N,\, \# J<\infty}$, with $A_\emptyset =M$.  We define a
  functor
  $\E^{\mathcal A}\colon \square^{\N}\to \Man$
  as follows.  Each finite subset $I\subseteq \N$ is sent to
  $E_I:=\prod^M_{J\subseteq I} A_J$, the fibered product of vector
  bundles over $M$. 
	
  For $I\subseteq \N$ with $1\leq\# I<\infty$ and for $k\in I$, the
  arrow $I\to I\setminus\{k\}$ is sent to the canonical vector bundle
  projection
  \[p^I_k\colon \prod^M_{J\subseteq I} A_J\to \prod^M_{J\subseteq
      I\setminus\{k\}} A_J.\] In particular, the arrow
  $\{i\}\to\emptyset$ for $i\in\N$ is sent to the vector bundle
  projection
  $p^{\{i\}}_\emptyset=q_{\{i\}}\colon E_{\{i\}}=A_{\{i\}}\to
  E_\emptyset=M$.  A multiple vector bundle
  $\E^{\mathcal A}\colon \square^{\N}\to \Man$ constructed in this
  manner is called a \textbf{decomposed multiple vector
    bundle}. 
  A decomposed $n$-fold vector bundle
  $\E^{\mathcal A}\colon \square^{n}\to \Man$ is
  defined accordingly. In that case we will write $E^\A:=\E^\A(\nset)$ 
  for the total space.
	Decomposed $n$-fold vector bundles are also defined in \cite{GrMa12}.

  \begin{example}\label{decTVB}
  A $3$-fold vector bundle is also called a triple vector bundle.
  A \emph{trivial or decomposed triple vector bundle} is given by 
  \[
	E_{\{1,2,3\}}=A_{\{1\}}\times_MA_{\{2\}}\times_MA_{\{3\}}\times_MA_{\{1,2\}}
	\times_MA_{\{1,3\}}\times_MA_{\{2,3\}}\times_MA_{\{1,2,3\}},
  \]
  with decomposed sides
\begin{equation*}
		\begin{split}
		E_{\{1,2\}}&=A_{\{1\}}\times_MA_{\{2\}}\times_MA_{\{1,2\}}\,,\quad 
		E_{\{1,3\}}=A_{\{1\}}\times_MA_{\{3\}}\times_MA_{\{1,3\}}\,,\\ 
		E_{\{2,3\}}&=A_{\{2\}}\times_MA_{\{3\}}\times_MA_{\{2,3\}}\,,
		\end{split}
\end{equation*}
where $A_I$, $I\subseteq \nset$ are all vector bundles over
$M$, the projections are the appropriate projections to the factors
and the additions are defined in an obvious manner in the fibers.
\end{example}
  
\subsubsection{\textbf{Vacant multiple and $n$-fold vector bundles}}
As a special case of this, if 
$\overline{\mathcal A}=(q_i\colon A_i\to M)_{i\in\N}$ is a collection
of vector bundles over $M$, we construct the multiple vector bundle
$\E^{\overline{\A}}\colon \square^{\N}\to
\Man$ as follows:
  \[ I\mapsto \prod^M_{i\in I} A_{i}, \qquad (I\to I\setminus\{k\})
  \mapsto \left(p^I_k\colon \prod^M_{i\in I} A_{i}\to
    \prod^M_{i\in I\setminus\{k\}} A_{i}\right).
\]Such a multiple vector bundle is called a \textbf{vacant decomposed}
multiple vector bundle. We will see later that all \emph{cores} of
these multiple vector bundles are trivial.
    
  Given a collection of vector bundles
  $\mathcal A=(q_J\colon A_J\to M)_{J\subseteq \N,\,\#J<\infty}$, with
  $A_\emptyset =M$, we can define
  $\overline{\mathcal A}=(q_i\colon A_i\to M)_{i\in\N}$ by
  $A_i=A_{\{i\}}$. We get then a monomorphism of multiple vector
  bundles
    \begin{equation}\label{emb_spl_dec}
\iota\colon \E^{\overline{\A}}\to \E^\A
    \end{equation}
    defined by
    $\iota(I)\colon \prod^M_{i\in I} A_{\{i\}}\to \prod^M_{J\subseteq
      I} A_J$,
    $\iota(I)((v_i)_{i\in I})=(w_J)_{J\subseteq I}$, $w_{\{i\}}=v_i$
    for $i\in I$, $w_\emptyset=v_\emptyset:=m\in M$ and
    $w_J=0^{A_J}_m$ for $\# J\geq 2$. In particular,
    $\iota(\{i\})=\id_{A_{\{i\}}}$ for all $i\in \N$.
    
    In the case of an $n$-fold vector bundle we write 
    $\overline{E}:=\overline{\E}(\nset)$ for the total space. 

    \subsubsection{\textbf{``Diagonal'' decomposed and vacant $k$-fold vector bundles}}
		\label{diagdec}
    More generally, consider a collection
    $\mathcal A=(q_I\colon A_I\to M)_{I\subseteq \nset}$ of
    vector bundles, with $A_\emptyset =M$, and a partition $\rho=\{I_1,\ldots,I_k\}$ of
    $\nset$ with $I_j\neq \emptyset$, for $j=1,\ldots,k$. Then we can define a
    $k$-cube category $\lozenge^{\rho}$ with objects the subsets
    $\nu\subseteq \rho$ and with morphisms
    $\nu_1\to \nu_2 \Leftrightarrow \nu_2\subseteq \nu_1$. 
		We will write $[\nu]:=\cup_{K\in\nu}K$ for $\nu\subseteq\rho$.
    Now we define a vacant $k$-fold vector bundle
    $\overline{\E^{\mathcal A}_\rho}\colon \lozenge^{\rho}\to\Man$ by
    \[ \nu\mapsto \prod^M_{K\in \nu} A_{K}, \qquad
      (\nu\to\nu\setminus\{I\})\mapsto \left(p^\nu_{\nu\setminus\{I\}}\colon
        \prod^M_{K\in \nu} A_{K}\to \prod^M_{K\in \nu\setminus\{I\}}
        A_{K}\right).
\]
In a similar manner, we define a decomposed 
$k$-fold vector bundle
    $\E^{\mathcal A}_\rho\colon \lozenge^{\rho}\to\Man$ by
    \[ \nu\mapsto \prod^M_{
    \nu'\subseteq\nu} A_{[\nu']}, \qquad
(\nu\to\nu\setminus\{I\})\mapsto \left(\prod^M_{\nu'\subseteq\nu} A_{[\nu']}\to 
\prod^M_{\nu'\subseteq \nu\setminus\{I\}} A_{[\nu']}
\right),
\]
where the map on the right-hand side is the canonical projection.  We
get as before an obvious monomorphism of $k$-fold vector bundles
$\iota^\rho\colon \overline{\E^{\mathcal A}_\rho}\to
\E^{\mathcal A}_\rho$.
For each $\nu\subseteq \rho$ we have furthermore the obvious canonical injections
\[ \eta^\rho(\nu)\colon \E^{\mathcal A}_\rho(\nu)=\prod^M_{\nu'\subseteq \nu}A_{[\nu']}\hookrightarrow
  \E^{\mathcal A}([\nu])=\prod^M_{J\subseteq [\nu]}A_J\,.\]

\subsubsection{\textbf{The tangent prolongation of an $n$-fold vector bundle}}
Given an $n$-fold vector bundle $\E\colon \square^{n}\to \Man$ 
we define an $(n+1)$-fold vector bundle 
$T\E\colon \square^{n+1}\to \Man$, the \textbf{tangent prolongation of} 
$\E$, as follows. Given $I\subseteq\nset$, we set $T\E(I):=E_I$ 
and $T\E(I\cup\{n+1\}):=TE_I$. Furthermore, for $i\in I\subseteq\nset$ 
we set 
\begin{align*}
&T\E(I\to I\setminus\{i\}):=p^I_i
\colon E_I\to E_{I\setminus\{i\}}\,,\\
&T\E(I\cup\{n+1\}\to (I\cup\{n+1\})\setminus\{i\}):=T(p^I_i)
\colon TE_I\to TE_{I\setminus\{i\}}\,,\\ 
&T\E(I\cup\{n+1\}\to I):=p_{E_I}
\colon TE_I\to E_I\,,
\end{align*}
where the last map is the canonical projection. 

\subsubsection{\textbf{Multiple homomorphism vector bundles}}
Given two $n$-fold vector bundles $\E$ and $\F$ with the same absolute 
base $\E(\emptyset)=\F(\emptyset)=M$ we construct an
$n$-fold vector bundle $\Hom_n(\E,\F)$, which is the $n$-fold analogon
of the bundle $\Hom(E,F)$ for ordinary vector bundles $E$ and $F$ over 
$M$.  

For $m\in M$ the restrictions $\E|_m$ and $\F|_m$ define $n$-fold
vector bundles over a single point as absolute base. With this we can
define $\Hom_n(\E,\F)$ to be
\[
\Hom_n(\E,\F):=\bigl\{\Phi_m\colon \E|_m\to \F|_m\mid m\in M, \, 
\Phi_m \text{ morphism of }n\text{-fold vector bundles} \bigr\}\,.
\]
This space is equipped with an obvious projection to $M$.
Since $n$-fold vector bundle morphisms have underlying $(n-1)$-fold
vector bundle morphisms between the faces there are additionally
projections
$\Hom_n(\E,\F)\to
\Hom_{n-1}(\E^{\nset\nok,\emptyset},\F^{\nset\nok,\emptyset})$ for all
$k\in\nset$. Each of these projections carries a vector bundle
structure, with the sum of two morphisms $\Phi_m$ and $\Psi_m$
projecting to the same base
$\phi\colon \E^{\nset\nok}|_m\to \F^{\nset\nok}|_m$ defined as
$(\Phi_m +_{\nset\nok}\Psi_m)(e):=\Phi_m(e) +_{\nset\nok} \Psi_m(e)$.  These
vector bundle structures define an $n$-fold vector bundle
$\Hom(\E,\F)$ with total space $\Hom_n(\E,\F)$ and absolute base $M$,
by setting
$\Hom(\E,\F)(I):=\Hom_{\# I}(\E^{I,\emptyset},\F^{I,\emptyset})$.

Every morphism of $n$-fold vector bundles $\E\rightarrow\F$ over the
identity on $M$ corresponds to a smooth map $M\to \Hom_n(\E,\F)$ which
is a section of the projection to $M$.  

In particular, let $F\to M$ be an ordinary vector bundle and 
consider the $n$-fold vector bundle $\F$ defined by 
$\F(\nset)=F$ and $\F(I)=M$ for all $I\subsetneq\nset$. 
Then we write $\Mor_n(\E, F)$ for the space of $n$-fold vector
bundle morphisms from $\E$ to $\F$ over $\id_M$.
\begin{lemma}\label{module_morphisms}
  Let $\E$ be an $n$-fold vector bundle over $M$ and $F$ be a vector 
	bundle over $M$. Then the space
  $\Mor_n(\E, F)$ is a
  $C^\infty(M)$-module.
  \end{lemma}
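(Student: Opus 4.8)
The plan is to first identify $\Mor_n(\E,F)$ concretely, and then to transport the fiberwise vector space structure of $F$ onto it. Since $\F(I)=M$ for every $I\subsetneq\nset$ and a morphism is required to cover $\id_M$, naturality forces every component $\tau(I)$ with $I\subsetneq\nset$ to be the projection $p^I_\emptyset\colon E_I\to M$; thus $\tau$ is completely determined by its top component $\Phi:=\tau(\nset)\colon E\to F$. Unravelling the requirement that each elementary square be a vector bundle homomorphism, $\Phi$ is exactly a smooth map $E\to F$ covering $\id_M$ (i.e. the image $\Phi(e)$ lies over $p^\nset_\emptyset(e)$) which is linear for each of the $n$ vector bundle structures $E\to E_{\nset\setminus\{i\}}$, the target structure always being the single bundle $F\to M$. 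Conversely any such $\Phi$ gives an element of $\Mor_n(\E,F)$, so I would work throughout with these maps.

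Second, I would define the two operations pointwise using the vector bundle $F$. For $\Phi,\Psi\in\Mor_n(\E,F)$, $f\in C^\infty(M)$ and $e\in E$ with $m:=p^\nset_\emptyset(e)$, set $(\Phi+\Psi)(e):=\Phi(e)+\Psi(e)$ and $(f\cdot\Phi)(e):=f(m)\cdot\Phi(e)$, where $+$ and $\cdot$ are the addition and scalar multiplication of the fiber $F_m$. These are well defined because $\Phi(e)$ and $\Psi(e)$ lie in the common fiber $F_m$, and smooth because the structure maps of $F$, the projection $p^\nset_\emptyset$ and $f$ are smooth. The zero element is $e\mapsto 0^F_m$ and the additive inverse is $(-1)\cdot\Phi$.

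Third, and this is the only real content, I would verify that $\Phi+\Psi$ and $f\cdot\Phi$ again lie in $\Mor_n(\E,F)$. Both have the same forced lower components as $\Phi$ and $\Psi$, so it remains to check $n$-fold linearity of the top map. Fixing $i$ and taking $e,e'\in E$ with $p_i(e)=p_i(e')$, the elements $e$, $e'$ and $e+_{\nset\setminus\{i\}}e'$ all project to a common $m\in M$; using that $\Phi$ and $\Psi$ are each additive for $+_{\nset\setminus\{i\}}$ and then the commutativity and associativity of the single addition of $F_m$ yields $(\Phi+\Psi)(e+_{\nset\setminus\{i\}}e')=(\Phi+\Psi)(e)+(\Phi+\Psi)(e')$, while compatibility with $\cdot_{\nset\setminus\{i\}}$ follows from distributivity in $F_m$; the analogous computation handles $f\cdot\Phi$. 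The point that makes this work is that however many additions $+_{\nset\setminus\{1\}},\ldots,+_{\nset\setminus\{n\}}$ the source carries, they are all sent into the one vector space structure on the fibers of $F$, so the vector space axioms of $F_m$ suffice.

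Finally I would check the module axioms $(f+g)\cdot\Phi=f\cdot\Phi+g\cdot\Phi$, $f\cdot(\Phi+\Psi)=f\cdot\Phi+f\cdot\Psi$, $(fg)\cdot\Phi=f\cdot(g\cdot\Phi)$ and $1\cdot\Phi=\Phi$, each of which reduces, after evaluating at an arbitrary $e\in E$, to the corresponding identity in the vector space $F_m$. The main obstacle is really Step three: ensuring the pointwise operations preserve linearity with respect to all $n$ structures simultaneously, which I expect to become routine once everything is reduced to the fiberwise axioms of $F$.
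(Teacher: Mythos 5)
Your proof is correct and follows essentially the same route as the paper's: both identify the lower components of a morphism as the forced projections $p^I_\emptyset$, define the module operations pointwise in the fibers of $F$ on the top component, and check that $n$-fold linearity is preserved because all the additions $+_{\nset\setminus\{i\}}$ land in the single vector space structure of the fibers of $F$. Your write-up is in fact somewhat more detailed than the paper's, which asserts the preservation of linearity ``by construction'' rather than spelling out the fiberwise computation.
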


  \begin{proof}
    An element $\tau$ of $\Mor_n(\E, F)$ necessarily satisfies
    $\tau(I)\colon E(I)\to M$, $\tau(I)(e)=p^I_\emptyset(e)$ for all
    $e\in \E(I)$, $I\subsetneq \nset$.  Take $f_1, f_2\in C^\infty(M)$
    and $\tau_1,\tau_2\in \Mor_n(\E, F)$. Then
    $(f_1\cdot \tau_1+f_2\cdot \tau_2)\colon \E\to F$ is
    defined by
    $(f_1\cdot\tau_1+f_2\cdot \tau_2)(I)(e)=p^I_\emptyset(e)$ for all
    $e\in \E(I)$, $I\subsetneq \nset$ and
    $(f_1\cdot\tau_1+f_2\cdot
    \tau_2)(\nset)(e)=f_1(p^I_\emptyset(e))\cdot\tau_1(e)+f_2(p^I_\emptyset(e))\cdot\tau_2(e)$
    for $e\in \E(\nset)$.

    By construction, $(f_1\cdot\tau_1+f_2\cdot \tau_2)(\nset)$ is smooth and
    \[
\begin{tikzcd}
\E(\nset)\ar[rr,"(f_1\tau_1+f_2\tau_2)(\nset)"] \ar[d,"p^{\nset}_{\nset\setminus\{i\}}"] && F\ar[d,"q_{F} "]\\
\E(\nset\setminus \{i\}) \ar[rr,"\tau(\nset\setminus\{i\})"] && M 
\end{tikzcd} 
\]
is a morphism of vector bundles for all $i\in \nset$.  For
$I\subsetneq \nset$ and $i\in I$, the map
$(f_1\cdot\tau_1+f_2\cdot \tau_2)(I)\colon \E(I)\to M$ is obviously a
vector bundle morphism over
$\tau(I\setminus\{i\})\colon \E(I\setminus\{i\})\to M$.
    \end{proof}

\subsection{The $n$-pullback of an $n$-fold vector bundle}\label{sec_pullback}

Let $\E$ be an $n$-fold vector bundle.
We define the $n$\textbf{-pullback of }$\E$ to be the set
\[P=\left\{(e_1,\ldots,e_n) \left| e_i\in E_{\nset\setminus\{i\}} 
		\text{ and } p_j^{\underline{n}\setminus\{i\}}(e_i)= p_i^{\nset\setminus\{j\}}(e_j)
			\text{ for } i,j\in \nset\right.\right\}\,.
  \]

We prove the following theorem, which is central in our proof of the existence of a linear splitting.
  \begin{thm}\label{nPullback}
Let $\E\colon \square^n\to \Man$ be an $n$-fold vector bundle. Then 
\begin{enumerate}
\item[(a)] $P$ defined as
above is a smooth embedded submanifold 
of the product $E_{\nset\setminus\{1\}}\times \ldots \times E_{\nset\setminus\{n\}}$.
\item[(b)] The functor $\P$
    defined by $\P(\nset)=P$, $\P(S)=E_S$ for all $S\subsetneq \nset$
    and the vector bundle projections
    $p^S_{i}\colon E_S\to E_{S\setminus\{i\}}$ for all
    $S\subsetneq \nset$ and $i\in S$ and
    $p'_{i}\colon P\to E_{\nset\setminus\{i\}}$,
    $(e_1,\ldots,e_n)\mapsto e_i$ is an $n$-fold vector bundle.
  \item[(c)] The map $\pi(\nset)\colon E\to P$ given by $\pi(\nset)\colon e\mapsto (p_1(e),\ldots, p_n(e))$,
    defines together with $\pi(J)=\id_{E_J}$ for $J\subsetneq \nset$,
    a surjective $n$-fold vector bundle morphism $\pi\colon \E \to \P$.
\end{enumerate}
\end{thm}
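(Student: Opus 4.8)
The plan is to establish (a), (b) and (c) simultaneously by induction on $n$, the base case $n\le 2$ being the theory of double vector bundles recalled in Section \ref{2-case}. For the inductive step I would single out the last direction and apply Proposition \ref{sub-k-fold} to the two faces $\E^{\nset,\{n\}}$ (an $(n-1)$-fold vector bundle with sides $E_{\nset\setminus\{i\}}$, $i<n$, and total space $E$) and $\E^{\nset\setminus\{n\},\emptyset}=E_{\nset\setminus\{n\}}$. Writing $P'$ and $P''$ for their $(n-1)$-pullbacks, the inductive hypothesis says that $P'\subseteq\prod_{i<n}E_{\nset\setminus\{i\}}$ and $P''\subseteq\prod_{i<n}E_{\nset\setminus\{i,n\}}$ are embedded submanifolds carrying $(n-1)$-fold vector bundle structures, and that $\pi_{\E^{\nset,\{n\}}}\colon E\to P'$ and $g:=\pi_{\E^{\nset\setminus\{n\},\emptyset}}\colon E_{\nset\setminus\{n\}}\to P''$ are surjective submersions.

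For (a) I would split the conditions defining $P$ into those involving only $e_1,\dots,e_{n-1}$ — which say exactly that $(e_1,\dots,e_{n-1})\in P'$ — and those coupling each $e_i$ to $e_n$, which say that $f(e_1,\dots,e_{n-1})=g(e_n)$ for the smooth map $f\colon P'\to P''$, $f((e_i)_{i<n})=(p^{\nset\setminus\{i\}}_n(e_i))_{i<n}$ (this lands in $P''$ by the commutativity of the cube). Thus $P=P'\times_{P''}E_{\nset\setminus\{n\}}$ is the fibered product of $f$ along the surjective submersion $g$, hence an embedded submanifold, proving (a). Part (b) is then largely formal: the vector bundle structure of $\P$ in direction $i$ is defined entrywise, fixing $e_i$ and using the direction-$i$ operations of $\E$ on the remaining entries; one checks that these preserve the compatibility conditions and restrict to $P$, so they are smooth, and that every elementary square of $\P$ is a double vector bundle because the corresponding square of $\E$ is. For the morphism in (c), functoriality and the commuting squares show $\pi(\nset)(e)=(p_1(e),\dots,p_n(e))\in P$ and that $\pi$ is natural; smoothness is clear since $P$ is embedded and the $p_i$ are smooth, and linearity in each direction is immediate.

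The substantial point, which I expect to be the main obstacle, is the surjectivity of $\pi(\nset)\colon E\to P$. I would factor $\pi=(\pi_{\E^{\nset,\{n\}}},p_n)$ and observe that $\pi$ is a morphism of vector bundles over $E_{\nset\setminus\{n\}}$, from $p_n\colon E\to E_{\nset\setminus\{n\}}$ to the direction-$n$ projection $p'_n\colon P\to E_{\nset\setminus\{n\}}$, covering the identity. Because $\pi$ is additive and homogeneous in direction $n$, its image meets each fibre $(p'_n)^{-1}(e_n)$ in a linear subspace, so it suffices to prove that these subspaces exhaust the fibres. For this I would argue in the spirit of \cite{LiSe11}: the fibrewise rank is lower semicontinuous in $e_n$ (being the rank of the smooth family of linear maps $p_n^{-1}(e_n)\to(p'_n)^{-1}(e_n)$); it is full over the absolute base $M\hookrightarrow E_{\nset\setminus\{n\}}$, where fullness follows from the inductive hypothesis together with the base-case surjectivity of double projections; and it is invariant under the joint scaling $t\mapsto t\cdot_1\cdots t\cdot_{n-1}\,e_n$, since $\pi$ intertwines this with the corresponding scaling of $E$. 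As small $t\neq 0$ sends any $e_n$ into the neighbourhood of $M$ on which the rank is already full, fullness propagates to all of $E_{\nset\setminus\{n\}}$, giving surjectivity; since the fibrewise rank is then full and locally constant, $\pi(\nset)$ is moreover a submersion, which is the form in which the inductive step for (a) consumes it.

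The genuinely delicate ingredient sits in the base case: the surjectivity of the double projection $(p^D_A,p^D_B)\colon D\to A\times_M B$ of a double vector bundle, which feeds both the rank computation over $M$ and the start of the induction. I would prove it by the commuting-scalar-multiplication argument of \cite{LiSe11}: the image $R$ is a vector subbundle of $q_A^{!}B\to A$ (it contains the zero section and is closed under the fibrewise addition and scaling inherited from $+_A$ and $\cdot_A$), it is full over the zero section of $A$, and the fibre $R_{a_m}$ is unchanged when $a_m$ is rescaled in $A\to M$, as one sees by applying the scalar multiplication $\cdot_B$ of $D\to B$ to a preimage. Lower semicontinuity of the rank together with this homogeneity then forces $R=A\times_M B$.
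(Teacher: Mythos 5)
Your overall architecture matches the paper's: induction on $n$, with part (a) obtained by exhibiting $P$ as the fibered product $P'\times_{P''}E_{\nset\setminus\{n\}}$ along the surjective submersion $g=\delta_n$ (the paper phrases this as the pullback $(\delta_k)^!P^\upp_k$ of the vector bundle $P^\upp_k\to P^\low_k$, via Lemmas \ref{emb_prod} and \ref{lemma_embedding}, but it is the same submanifold), and with parts (b), (c) largely formal except for surjectivity. For surjectivity, however, you take a genuinely different route. The paper uses no rank semicontinuity and no scaling: by Lemma \ref{lise}, the inductively surjective $\pi^\upp_k(\nset)\colon E\to P^\upp_k$, being a bundle morphism over the identity on $E_{\nset\setminus\{i\}}$, is a surjective \emph{submersion}; viewing the same map as a bundle morphism over the surjective submersion $\delta_k$, Lemma \ref{lise} (in the other direction) gives that it is surjective on the fibres of $p_k$, and Lemma \ref{surj_surj} transfers this fibrewise surjectivity to the pullback $\pi(\nset)=\delta_k^!\pi^\upp_k(\nset)\colon E\to P$ over the identity, whence surjectivity. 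Your replacement — lower semicontinuity of the fibrewise rank, invariance under the joint scaling $t\cdot_1\cdots t\cdot_{n-1}$, and propagation from a neighbourhood of the zero section — is the commuting-scalar-multiplications argument, and steps (i), (iii), (iv) of it, as well as your base-case treatment of the double projection, are sound.

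The gap is your step (ii), fullness of the rank over $M\hookrightarrow E_{\nset\setminus\{n\}}$, which you assert ``follows from the inductive hypothesis together with the base-case surjectivity of double projections.'' It does not, as your induction is set up. Given a point $\bigl(e_1,\ldots,e_{n-1},\nvbzero{\nset\setminus\{n\}}{m}\bigr)$ in the fibre of $P$ over the zero point, the inductive hypothesis (surjectivity of $E\to P'$) produces $e'\in E$ with $p_i(e')=e_i$ for $i<n$, but $p_n(e')$ is then only constrained to satisfy $\delta_n\bigl(p_n(e')\bigr)=\delta_n\bigl(\nvbzero{\nset\setminus\{n\}}{m}\bigr)$; that is, $p_n(e')$ is an arbitrary element of the ultracore fibre $\bigl(E^{\nset\setminus\{n\}}_{\nset\setminus\{n\}}\bigr)_m$, not the zero. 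Correcting $e'$ so that its $n$-th projection becomes $\nvbzero{\nset\setminus\{n\}}{m}$ while the other projections are preserved is itself a core-lifting problem of precisely the kind being proven (it is, in essence, the exactness statement of Proposition \ref{nCoreSES}, which the paper can only prove \emph{after} Theorem \ref{nPullback}); so, as written, the step is circular. The claim can be repaired, but it needs a further idea: the fibre of $P$ over the zero section is the $(n-1)$-pullback of the auxiliary ``kernel'' $(n-1)$-fold vector bundle $J\mapsto \bigl(p^{J\cup\{n\}}_n\bigr)^{-1}\bigl(\nvbzero{J}{M}\bigr)$, $J\subseteq\underline{n-1}$ (the restriction of each bundle $E_{J\cup\{n\}}\to E_J$ to its zero section, whose multiple vector bundle structure must first be verified, e.g.\ with Lemma \ref{emb_prod}), and the inductive hypothesis applied to \emph{this} $(n-1)$-fold vector bundle yields exactly the fullness over $M$ that you need. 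Either supply this construction, or replace the whole scaling argument by the paper's two-lemma route, which avoids singling out the zero section altogether.
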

Note that for each $i\in\nset$, the top map $\pi(\nset)\colon E\to P$
of $\pi$ is necessarily  a vector
bundle morphism over the identity on $E_{\nset\setminus\{i\}}$.
For the proof of this theorem, we need the following lemmas.
\begin{lemma}\label{lemma_embedding}
  Let $f\colon M\to N$ be a smooth surjective submersion, and let
  $q_E\colon E\to N$ be a smooth vector bundle. Then the inclusion
  $f^!E\hookrightarrow E\times M$
  is a
  smooth embedding.
  \end{lemma}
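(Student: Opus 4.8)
The plan is to realise $f^!E$ as a transverse preimage. Writing the pullback bundle concretely as
\[
  f^!E=\{(e,m)\in E\times M\mid q_E(e)=f(m)\},
\]
I would consider the smooth map $h\colon E\times M\to N\times N$, $h(e,m)=(q_E(e),f(m))$. Then $f^!E=h^{-1}(\Delta)$, where $\Delta=\{(y,y)\mid y\in N\}$ is the closed, embedded diagonal of $N\times N$. The idea is to show that $h$ is transverse to $\Delta$ and then invoke the transversal preimage theorem: a transverse preimage of an embedded submanifold is an embedded submanifold, whose inclusion is by definition a smooth embedding.

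First I would check transversality at an arbitrary point $(e,m)\in f^!E$, say with $y:=q_E(e)=f(m)$. Here $T_{(y,y)}\Delta=\{(u,u)\mid u\in T_yN\}$, and it suffices to show that the image of $dh_{(e,m)}$ together with $T_{(y,y)}\Delta$ spans $T_yN\oplus T_yN$. Since $f$ is a submersion, $df_m\colon T_mM\to T_yN$ is surjective, so the image of $dh_{(e,m)}$ already contains every vector of the form $(0,df_m(w))=(0,b)$ with $b\in T_yN$. Given any $(a,b)\in T_yN\oplus T_yN$ I can then write $(a,b)=(a,a)+(0,b-a)$, with the first summand in $T_{(y,y)}\Delta$ and the second in the image of $dh_{(e,m)}$. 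Hence $h$ is transverse to $\Delta$.

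The transversality theorem then yields that $f^!E=h^{-1}(\Delta)$ is an embedded submanifold of $E\times M$, of codimension $\dim N$, so the inclusion $f^!E\hookrightarrow E\times M$ is a smooth embedding. Note that only the submersivity of $f$ is used; neither the surjectivity of $f$ nor the full vector bundle structure of $E$ (only smoothness of $q_E$) enters the embedding claim itself.

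The main, and really only, obstacle is the transversality verification, which as above reduces immediately to surjectivity of $df_m$; everything else is the standard preimage theorem. As an alternative I could argue locally: over an open $V\subseteq N$ trivialising $E$ as $E|_V\cong V\times\mathbb R^k$, the portion of $f^!E$ lying in $q_E^{-1}(V)\times f^{-1}(V)$ is exactly the image of the embedding $f^{-1}(V)\times\mathbb R^k\to E\times M$, $(m,\xi)\mapsto((f(m),\xi),m)$ read through the trivialisation, and these local embeddings patch together; but the transversality argument is cleaner and coordinate-free, so I would present that one.
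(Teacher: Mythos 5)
Your proof is correct. Note that the paper itself offers no argument here: it declares the lemma standard and leaves it as an exercise, so your write-up actually fills a gap rather than paralleling an existing proof. The transversality argument is sound: $f^!E=h^{-1}(\Delta)$ for $h(e,m)=(q_E(e),f(m))$, and your decomposition $(a,b)=(a,a)+(0,b-a)$ correctly verifies $\operatorname{im}dh_{(e,m)}+T_{(y,y)}\Delta=T_yN\oplus T_yN$ using only the surjectivity of $df_m$; the preimage theorem then gives the embedded submanifold structure, and the inclusion of an embedded submanifold is an embedding by definition. Your closing observation can be pushed one step further: since $q_E$ is a vector bundle projection, it is itself a submersion, so $h$ is transverse to $\Delta$ even with no hypothesis on $f$ at all --- the same decomposition works with the roles of the two factors exchanged. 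The only point worth making explicit, which your alternative local argument already addresses, is that the submanifold smooth structure produced by transversality agrees with the usual pullback-bundle smooth structure on $f^!E$ (defined via local trivialisations $f^{-1}(V)\times\mathbb{R}^k$); this follows because the local trivialisation charts exhibit $f^!E$ as an embedded submanifold and embedded submanifold structures on a fixed subset are unique, so the statement of the lemma is proved in either reading.
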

This lemma is standard and its proof is left as an exercise. The next statement is obvious.

\begin{lemma}\label{surj_surj}
  Let $A\to M$ and $B\to N$ be two smooth vector bundles, and let
  $\phi\colon A\to B$ be a homomorphism of vector bundles over a
  surjective submersion $f\colon M\to N$. Assume that $\phi$ is
  surjective in each fiber. Then the pullback
  homomorphism $f^!\phi\colon A\to f^!B$, $a_m\mapsto (\phi(a_m),m)$
  over the identity on $M$ is surjective in each fiber.
\end{lemma}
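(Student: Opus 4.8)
The plan is to observe that surjectivity in each fiber is a purely pointwise statement, so I would fix a point $m\in M$ and compute the restriction of $f^!\phi$ to the fiber $A_m$ directly. First I would recall the description of the pullback bundle: by definition $f^!B=\{(b,m)\in B\times M\mid q_B(b)=f(m)\}$, with projection to $M$ the second factor, so that its fiber over $m$ is $(f^!B)_m=\{(b,m)\mid b\in B_{f(m)}\}$, canonically identified with $B_{f(m)}$ via $(b,m)\mapsto b$. Note also that $\phi$ being a homomorphism over $f$ means $q_B(\phi(a_m))=f(m)$, so $\phi(a_m)\in B_{f(m)}$ and the formula below indeed lands in $(f^!B)_m$.

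Next I would unwind the definition of the pullback homomorphism on this fiber. The map $f^!\phi\colon A\to f^!B$ sends $a_m\mapsto (\phi(a_m),m)$, so its restriction to the fiber over $m$ is the linear map $A_m\to (f^!B)_m$, $a_m\mapsto(\phi(a_m),m)$. Under the canonical identification $(f^!B)_m\cong B_{f(m)}$ above, this is exactly the fiber map $\phi_m\colon A_m\to B_{f(m)}$ of the original homomorphism $\phi$. Since $\phi$ is surjective in each fiber by hypothesis, $\phi_m$ is surjective, and therefore so is $(f^!\phi)_m$. As $m\in M$ was arbitrary, $f^!\phi$ is surjective in each fiber, which is the claim.

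I do not expect any genuine obstacle here: the content is the tautological identification of the fiber map of $f^!\phi$ with that of $\phi$, which is exactly why the statement was labelled obvious. The only point worth flagging is that the surjective-submersion hypothesis on $f$ is not actually used for this fiberwise conclusion; it is present to match the ambient setting in which the lemma is applied, where it guarantees (via Lemma \ref{lemma_embedding}) the smoothness and embedding properties of $f^!B$ needed in the proof of Theorem \ref{nPullback}.
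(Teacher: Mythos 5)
Your proof is correct and is exactly the intended argument: the paper offers no proof at all (it declares the statement obvious), and your pointwise identification of the fiber $(f^!B)_m$ with $B_{f(m)}$, under which the fiber map of $f^!\phi$ becomes the fiber map $\phi\arrowvert_{A_m}\colon A_m\to B_{f(m)}$ of $\phi$ itself, is the tautological verification the authors had in mind. Your closing remark is also accurate: the surjective-submersion hypothesis on $f$ plays no role in the fiberwise conclusion and is only carried along to match the setting of Lemma \ref{lemma_embedding} and the proof of Theorem \ref{nPullback}, where these lemmas are applied together.
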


The following lemma is central in our proof, its technique is inspired
by a similar one in \cite{LiSe11}.
\begin{lemma}\label{lise}
Let $A\to M$ and $B\to N$ be two smooth vector bundles, and let
  $\phi\colon A\to B$ be a homomorphism of vector bundles over a
  smooth map $f\colon M\to N$. Then $\phi$ is a surjective submersion 
if and only if $\phi$ is surjective in each fiber and $f$ is a
surjective submersion.
\end{lemma}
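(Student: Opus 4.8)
The plan is to analyze the differential $d\phi_a$ at an arbitrary point $a\in A_m$ by decomposing it along the canonical vertical--horizontal splittings of the tangent spaces of $A$ and $B$, extracting both fibrewise surjectivity and the submersion property of $f$ from its behaviour. Writing $n=f(m)$, I would first record two structural facts valid for any vector bundle morphism over $f$. The vertical subspace $V_a=\ker d(q_A)_a$ is canonically identified with the fibre $A_m$, the vertical subspace $V_{\phi(a)}=\ker d(q_B)_{\phi(a)}$ with $B_n$, and $d\phi_a$ carries $V_a$ into $V_{\phi(a)}$, where it agrees with the linear fibre map $\phi_m\colon A_m\to B_n$ (since $\phi$ is fibrewise linear). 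Secondly, differentiating the commutation relation $q_B\circ\phi=f\circ q_A$ at $a$ yields $d(q_B)_{\phi(a)}\circ d\phi_a=df_m\circ d(q_A)_a$.

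For the direction $(\Leftarrow)$, assume $\phi$ is fibrewise surjective and $f$ is a surjective submersion. Surjectivity of $\phi$ is immediate: given $b\in B_n$, choose $m\in f^{-1}(n)$ (possible as $f$ is onto) and lift $b$ through the surjection $\phi_m$. For the submersion property I would show at each $a$ that $\operatorname{im}(d\phi_a)=T_{\phi(a)}B$. Fibrewise surjectivity gives $V_{\phi(a)}=\phi_m(A_m)\subseteq\operatorname{im}(d\phi_a)$, while the second relation shows that $d(q_B)_{\phi(a)}\circ d\phi_a=df_m\circ d(q_A)_a$ is onto $T_nN$ (as both $df_m$ and $d(q_A)_a$ are surjective). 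Together these force $\operatorname{im}(d\phi_a)+V_{\phi(a)}=T_{\phi(a)}B$, hence $\operatorname{im}(d\phi_a)=T_{\phi(a)}B$.

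For the direction $(\Rightarrow)$, assume $\phi$ is a surjective submersion. Surjectivity of $f$ follows by projecting a preimage of a zero element $0^B_n$ through $q_B\circ\phi=f\circ q_A$. The remaining content---fibrewise surjectivity and the submersion property of $f$---I would extract by evaluating $d\phi_a$ at the \emph{zero element} $a=0^A_m$. There the zero-section relation $\phi\circ 0^A=0^B\circ f$ forces $d\phi_{0_m}$ to be block diagonal with respect to the canonical splittings $T_{0_m}A\cong T_mM\oplus A_m$ and $T_{0_n}B\cong T_nN\oplus B_n$, with diagonal blocks exactly $df_m$ and $\phi_m$. Surjectivity of $d\phi_{0_m}$ then splits into surjectivity of $df_m$ (for every $m$, giving that $f$ is a submersion) and of $\phi_m$ (fibrewise surjectivity).

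The main obstacle is the forward implication's claim of \emph{fibrewise} surjectivity: away from the zero section $d\phi_a$ need not respect the vertical--horizontal splitting, and surjectivity of $d\phi_a$ at a general point does not by itself yield surjectivity of $\phi_m$, since a vertical target vector may only be hit by a non-vertical source vector when $df_m$ has a kernel. The resolution is precisely to test at the zero element, where the off-diagonal terms vanish and the block-diagonal structure makes the fibre map a direct summand of $d\phi_{0_m}$; this is the one place where the linear bundle structure, rather than mere smoothness, is essential.
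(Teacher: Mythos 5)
Your proof is correct, and at bottom it follows the same strategy as the paper: split the tangent spaces of $A$ and $B$ into horizontal and vertical parts and analyse the blocks of $T_{a_m}\phi$. But your execution differs in one essential point, and it is the better one. The paper asserts that at \emph{every} point $a_m$ the differential is block diagonal, $T_{a_m}\phi=T_mf\oplus \phi|_{A(m)}$, and deduces that $T_{a_m}\phi$ is surjective \emph{if and only if} both blocks are. This is false at points off the zero section: in a trivialisation $\phi(x,v)=(f(x),\Phi(x)v)$ one computes $T_{(x_0,v_0)}\phi(X,V)=\bigl(T_{x_0}f(X),\,(D_X\Phi)v_0+\Phi(x_0)V\bigr)$, so there is an off-diagonal term $(D_X\Phi)v_0$ which vanishes only when $v_0=0$, and no choice of complements can remove it in general. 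Concretely, for $f\colon\mathbb R^2\to\mathbb R$, $f(x,y)=x$, and $\phi((x,y),v)=(x,yv)$, the differential at the point $((0,0),1)$ is $(X,Y,V)\mapsto(X,Y)$, which is surjective although the fibre map $\phi_{(0,0)}=0$ is not; so the paper's pointwise ``only if'' fails, and its proof of the forward implication has a genuine (if reparable) gap. Your proposal handles exactly this: for $(\Leftarrow)$ you use only the triangular structure -- the image of $d\phi_a$ contains $V_{\phi(a)}$ and surjects onto $T_nN$ after composing with $dq_B$, hence is everything -- and for $(\Rightarrow)$ you evaluate at the zero element $0^A_m$, where the relation $\phi\circ 0^A=0^B\circ f$ really does make $d\phi_{0_m}$ block diagonal with blocks $df_m$ and $\phi_m$, so that surjectivity of $d\phi_{0_m}$ splits into surjectivity of both. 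In short, your argument is not a mere rephrasing of the paper's proof: it identifies the one step at which that proof, read literally, is wrong, and supplies the correct fix (work at the zero section), while retaining the paper's overall method.
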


\begin{proof}
  Choose $a_m\in A$. Then it is easy to see in local coordinates that
  the tangent space $T_{a_m}A$ splits as
  $T_{a_m}A\simeq T_mM\oplus A(m)$, and the tangent space
  $T_{\phi(a_m)}B$ splits as $T_{f(m)}N\oplus B(f(m))$. In those
  splittings, the map $T_{a_m}\phi\colon T_{a_m}A\to T_{\phi(a_m)}B$
  reads 
\[ T_{a_m}\phi=T_mf\oplus \phi\arrowvert_{A(m)}\colon T_mM\oplus
A(m)\to  T_{f(m)}N\oplus B(f(m)).
\]
Therefore, $T_{a_m}\phi$ is surjective if and only if $T_mf\colon
T_mM\to T_{f(m)}N$ is surjective and\linebreak $\phi\arrowvert_{A(m)}\colon
A(m)\to B(f(m))$ is surjective. Since the surjectivity of $\phi$
implies the surjectivity of $f$, the proof can easily be completed.
\end{proof}

\begin{rmk}\label{rem_surj_double_proj}
  Take $D$ a double vector bundle with sides $A$ and $B$. Then
  $q_B\colon B\to M$ is a surjective submersion since it it a vector
  bundle projection, and $p^D_A\colon D \to A$ is a surjective
  submersion for the same reason. Hence Lemma \ref{lise} implies that
  $p^D_A$ is surjective in each fiber. Now if $A\times_M B$ is
  identified with $q_B^!A$, then
  $(p^D_A,p^D_B)\colon D\to A\times_M B$ coincides with the pullback
  morphism $q_B^!p^D_A\colon D\to q_B^!A$ as morphism of vector
  bundles over $B$. By Lemma \ref{surj_surj}, it is hence surjective
  in each fiber, and so $(p^D_A,p^D_B)\colon D\to A\times_M B$ is
  surjective.  This shows Theorem \ref{nPullback} in the case $n=2$
  since then $A\times_M B$ is an embedded submanifold of $A\times B$,
  it is the total space of a double vector bundle with sides $A$ and
  $B$ and with trivial core, and the projection
  $\pi(\{1,2\})\colon D\to A\times_M B$ is equal to $(p^D_A,p^D_B)$.
  This reasoning is due to \cite{LiSe11}, and the proof of Theorem
  \ref{nPullback} is just a generalisation of it to the case of an
  arbitrary $n$, with a central role of Lemma \ref{lise} and
  of Lemma \ref{surj_surj}.
\end{rmk}

\begin{lemma}\label{emb_prod}
Let $q_A\colon A\to M$ be a smooth vector bundle, and let $B\subseteq A$ and
$N\subseteq M$ be embedded submanifolds with $q_A(B)=N$ and such that
for each $n\in N$, $B(n)\subseteq A(n)$ is a vector subspace. Then
$B\to N$ has a unique smooth vector bundle structure, such that the smooth
embeddings build a vector bundle homomorphism into $A\to M$.
\end{lemma}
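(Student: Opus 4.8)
The uniqueness is immediate and I would dispatch it first: requiring the inclusions $B\hookrightarrow A$ and $N\hookrightarrow M$ to form a vector bundle homomorphism forces the fibrewise addition and scalar multiplication of $B\to N$ to be the restrictions of those of $A\to M$, while the smooth structure on the total space $B$ is already fixed by the hypothesis that $B$ is an embedded submanifold. Two vector bundle structures on $B$ with the same projection $q_A\an B$ and the same fibrewise linear operations necessarily have compatible local trivialisations, hence coincide. So the entire content is \emph{existence}, which amounts to producing smooth, fibrewise-linear local trivialisations of $q_A\an B\colon B\to N$.

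The first and crucial step is to show that $q_A\an B\colon B\to N$ is a \emph{surjective submersion whose fibres $B(n)$ have locally constant dimension}. Surjectivity is the hypothesis $q_A(B)=N$. For the submersion property I would exploit that $B$ is invariant under the fibrewise scalar multiplication $m_t\colon A\to A$, since each $B(n)$ is a linear subspace: for $t\neq 0$ the map $m_t$ restricts to a diffeomorphism of $B$ over $\id_N$, and the zero section $0\colon N\to A$ takes values in $B$ because $0_n\in B(n)$. Along the zero section $q_A\an B\circ 0=\id_N$, so $q_A\an B$ has maximal rank $\dim N$ there. Because $q_A\an B\circ m_t=q_A\an B$, the differentials satisfy $\rk T_b(q_A\an B)=\rk T_{tb}(q_A\an B)$ for all $t\neq 0$, so the rank is constant along each ray $t\mapsto tb$; letting $t\to 0$ and using lower semicontinuity of the rank together with maximality on the zero section forces $\rk T_b(q_A\an B)=\dim N$ at every $b\in B$. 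Hence $q_A\an B$ is a submersion, its fibres $B(n)$ are submanifolds of dimension $\dim B-\dim N=:r$, and since $B(n)$ is a vector space this $r$ is the locally constant fibre rank. \emph{This step is where I expect the real difficulty to lie}, as it is what rules out jumps in fibre dimension and is the only place genuinely using embeddedness of $B$.

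For the second step I would build local trivialisations. Fix $n_0\in N$, trivialise $A$ over a neighbourhood $U\ni n_0$ in $M$ adapted so that $B(n_0)$ is the span of the first $r$ fibre coordinates, let $p\colon A\an U\to\R^r$ be the projection onto these, and set $F=(q_A\an B,\,p)\colon B\an V\to V\times\R^r$ for a small $V\subseteq N$. The dimension count of the first step gives $T_{0_{n_0}}B=T_{n_0}N\oplus B(n_0)$, on which $TF$ is block-diagonal and invertible, so $F$ is a local diffeomorphism along the zero section and each $F_n=p\an{B(n)}$ is a linear isomorphism for $n$ near $n_0$. The scalar-homogeneity relation $F\circ m_t=(\id\times(t\cdot))\circ F$ then propagates the local-diffeomorphism property from the zero section to every point of each fibre, since any $b\in B(n)$ is $m_{1/s}(sb)$ with $sb$ near $0_n$ for small $s$. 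Thus $F$ is a fibrewise-linear diffeomorphism $B\an V\cong V\times\R^r$, i.e.\ a vector bundle chart; transition maps between two such charts are smooth and $\mathrm{GL}_r$-valued, which furnishes the vector bundle structure on $B\to N$. By construction the inclusion $B\hookrightarrow A$ is smooth and fibrewise linear, hence a vector bundle homomorphism over $N\hookrightarrow M$, completing the existence part and therefore the lemma.
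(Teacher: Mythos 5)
Your proposal addresses a statement for which the paper offers no argument at all: immediately after stating Lemma \ref{emb_prod}, the authors write that it is standard and leave its proof to the reader, so there is no proof of record to compare routes with. Judged on its own merits, your argument is correct and complete. The uniqueness part is handled correctly: the smooth structure on $B$ is fixed by embeddedness, the projection and the fibrewise operations are forced by the requirement that the inclusions form a vector bundle homomorphism, and local triviality is a property rather than extra data, so at most one structure can exist. You also correctly isolate the real content, namely that $q_A\an{B}\colon B\to N$ is a submersion with locally constant fibre rank; your argument via invariance of $B$ under the scalar multiplications $m_t$ for $t\neq 0$, maximality of the rank along the zero section, lower semicontinuity of the rank, and constancy of the rank along rays $t\mapsto tb$ is sound, and the subsequent construction of the fibrewise-linear charts $F=(q_A\an{B},p)$ -- invertible differential at the zero section by the splitting $T_{0_{n_0}}B=T_{n_0}N\oplus B(n_0)$, then propagated to every point of every fibre by the homogeneity relation $F\circ m_t=(\id\times(t\,\cdot))\circ F$ -- finishes the existence part correctly. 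It is worth remarking that your use of the scalar multiplications fits squarely into the paper's own toolkit: it parallels the tangent-space splitting argument of Lemma \ref{lise} and the viewpoint, attributed in the introduction to \cite{GrRo09} and \cite{LiSe11}, that vector bundle geometry is governed by scalar multiplication. One small caveat: embeddedness of $B$ is in fact used throughout, not only to rule out fibre-dimension jumps -- it is what makes $m_t\an{B}$, the zero section $N\to B$, and $p\an{B}$ smooth as maps into or out of $B$ -- so your aside that the submersion step is ``the only place genuinely using embeddedness'' overstates slightly, but this does not affect the correctness of the proof.
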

This last lemma is standard as well. We leave its proof to the reader.

  \begin{proof}[Proof of Theorem \ref{nPullback}]
		We prove this by induction over $n$.
The case of $n=1$ is trivially 
		satisfied since in that case $\E$ is an ordinary vector bundle
                $E=E_{\{1\}}\to E_\emptyset =M$ and so $P=M$. 
		Let us now take $n\in\N$ with $n\geq 2$ and assume that all three claims are true for any 
		$(n-1)$-fold vector bundle $\E$.
		
Recall from Proposition \ref{sub-k-fold} that $\E^{\nset,\{k\}}$ is an $(n-1)$-fold vector bundle. The corresponding
$(n-1)$-pullback is
\[P^\upp_k:=\left\{(e_1,\ldots,\widehat{k},\ldots,e_n)\mid e_i\in
    E_{\nset\setminus \{i\}} \colon p_j^{\nset\setminus \{i\}}(e_i)=
    p_i^{\nset\setminus\{j\}}(e_j)\text{ for } i,j\in\nset\setminus
    \{k\}\right\}\,.\] By the induction hypothesis (b), this is the total space
of an $(n-1)$-fold vector bundle $\P^\upp_k$ with underlying nodes $E_J$ for
$k\in J\subsetneq \nset$. The absolute base of this $(n-1)$-fold
vector bundle is $E_{\{k\}}$, and by (c) we have a smooth morphism
$\pi^\upp_k\colon \E^{\nset,\{k\}}\to \P^\upp_k$ of $(n-1)$-fold
vector bundles
that is surjective.
In a similar manner, $\E^{\nset\setminus\{k\},\emptyset}$ is an
$(n-1)$-fold vector bundle.  The corresponding $(n-1)$-pullback is
\[P^{\low}_k:=\left\{(b_1,\ldots,\widehat{k},\ldots,b_n)\mid b_i\in
    E_{\nset\setminus \{k,i\}} \colon p_j^{\nset\setminus
      \{k,i\}}(b_i)= p_i^{\nset\setminus\{k,j\}}(b_j)\text{ for }
    i,j\in\nset\setminus\{k\}\right\}\,.\] Again by the induction
  hypothesis (b)
this is the total space of an $(n-1)$-fold vector bundle $\P^{\low}_k$
with
underlying nodes $E_J$ for $J\subsetneq \nset\setminus\{k\}$.
By (c) we have a smooth surjective morphism
$\pi^\low_k\colon \E^{\nset\setminus\{k\},\emptyset}\to \P^\low_k$ of $(n-1)$-fold
vector bundles.

By the induction hypothesis (a), $P^\upp_k$ and $P^\low_k$ are
embedded submanifolds of
$\prod_{\substack{i=1\\i\neq k}}^n E_{\nset\setminus\{i\}}$ and
$\prod_{\substack{i=1\\i\neq k}}^n E_{\nset\setminus\{i,k\}}$,
respectively. Since for each $i\neq k$ in $\nset$, we have the smooth
vector bundle
$ p^{\nset\setminus\{i\}}_k\colon E_{\nset\setminus\{i\}}\to
E_{\nset\setminus\{i,k\}}$,
the product
$\prod_{\substack{i=1\\i\neq k}}^n E_{\nset\setminus\{i\}}$ has a
smooth vector bundle structure over
$\prod_{\substack{i=1\\i\neq k}}^n E_{\nset\setminus\{i,k\}}$, the
projection of which we denote by $q_k$. Using the surjectivity of
$\pi^\low_k(\nset\setminus\{k\})\colon E_{\nset\setminus\{k\}}\to
P^\low_k$,
the surjectivity of $p_k\colon E\to E_{\nset\setminus\{k\}}$, as well
as the identities
$p^{\nset\setminus\{k\}}_i\circ p_k=p^{\nset\setminus\{i\}}_k\circ
p_i$
for $i\neq k$, we find easily that $q_k(P^\upp_k)=P^\low_k$. Further,
$P^\upp_k$ is clearly closed under the addition of
$\prod_{\substack{i=1\\i\neq k}}^n
E_{\nset\setminus\{i\}}\to\prod_{\substack{i=1\\i\neq k}}^n
E_{\nset\setminus\{i,k\}}$.
Lemma \ref{emb_prod} yields then that $q_k\colon P^\upp_k\to P^\low_k$ is a
smooth vector bundle.

Next let us set for simplicity
$\delta_k:=\pi^\low_k(\nset\setminus\{k\})\colon
E_{\nset\setminus\{k\}}\to P^{\low}_k$. Recall that it is defined by
\[\delta_k\colon
e_k\mapsto\left(p^{\nset\setminus\{k\}}_1(e_k),\ldots, \hat k,\ldots,
  p^{\nset\setminus\{k\}}_n(e_k)\right)\,.\]
Since $n\geq 2$ we can choose $i\in\nset\setminus\{k\}$. Then $\delta_k\colon
E_{\nset\setminus\{k\}}\to P^{\low}_k$
is a surjective smooth vector bundle homomorphism over the identity on
$E_{\nset\setminus \{i,k\}}$. By Lemma \ref{lise}, it is a
surjective submersion.
We consider the pullback vector bundles $(\delta_k)^!P^\upp_k$ over
$E_{\nset\setminus\{k\}}$, for each $k\in\nset$.  As a set, each
$(\delta_k)^!P^\upp_k$can easily be identified with $P$.  

Denote by $\varphi_k$ the inclusion of $P^\upp_k$ in
$E_{\nset\setminus\{1\}}\times\ldots\hat k\ldots\times
E_{\nset\setminus\{n\}}$. Then $P$ is embedded into
$E_{\nset\setminus\{1\}}\times\ldots\times E_{\nset\setminus\{n\}}$
via the composition
\[\begin{tikzcd}
  P\ar[r,hook] & 
  P^\upp_k\times E_{\nset\setminus\{k\}}\ar[rr,hook, "\varphi_k\times
    \id_{E_{\nset\setminus\{k\}}}"] & &
  (E_{\nset\setminus\{1\}}\times\ldots\hat k\ldots\times
  E_{\nset\setminus\{n\}})\times E_{\nset\setminus\{k\}}
\end{tikzcd}\,,\] where the map on the left is the embedding as in
Lemma \ref{lemma_embedding}.  It is easy to see that up to the obvious
reordering of the factors on the right, the embeddings obtained for
$k=1,\ldots,n$ are the same map. Therefore, all the obtained smooth
structures on $P$ are compatible and so $P$ is a smooth manifold and
all its projections are smooth. In particular, we have proved (a).

The compatibility of the vector bundle structures of $P$ over
$E_{\nset\setminus\{i\}}$ and $E_{\nset\setminus\{j\}}$ for $i\neq j$
follows from the compatibility of the structures in
$\E^{\nset\setminus\{k\},\emptyset}$. More precisely for
$i,j\in\nset$, the interchange law in the double vector bundle
$(P,E_{\nset\setminus\{i\}},E_{\nset\setminus\{j\}},E_{\nset\setminus\{i,j\}})$
follows from the interchange laws in the double vector bundles
$(E_{\nset\setminus\{k\}},E_{\nset\setminus\{k,i\}},E_{\nset\setminus\{k,j\}},E_{\nset\setminus\{k,i,j\}})$
for all $k\in\nset\setminus\{i,j\}$. We let the reader check this as
an exercise.  Hence we can define $\P\colon \square^n\to \Man$ and we
obtain an
$n$-fold vector bundle.

\medskip
 
For each $k=1,\ldots,n$, $\pi_k^\upp(\nset)\colon E\to P^\upp_k$ is a
vector bundle morphism over
$\delta_k\colon E_{\nset\setminus\{k\}}\to P^\low_k$.  The pullback of
$\pi_k^\upp(\nset)$ via the map $\delta_k$ is hence a vector bundle
morphism $E\to (\delta_k)^!P^\upp_k$ over the identity on
$E_{\nset\setminus\{k\}}$, and it is easy to see that it coincides --
via the identification of $P$ with $(\delta_k)^!P^\upp_k$ -- with the
$n$-fold projection $\pi(\nset)$ from $E$ to $P$.  Hence
$\pi\colon\E\to\P$ is an $n$-fold vector bundle morphism.

As before choose $i\in\nset\setminus\{k\}$. Since
$\pi_k^\upp(\nset)\colon E\to P^\upp_k$ is a surjective vector bundle morphism
over the identity on $E_{\nset\setminus\{i\}} $, it is a
surjective submersion by Lemma \ref{lise}. But since
$\delta_k\colon E_{\nset\setminus\{k\}}\to P^\low_k$ is a surjective
submersion and $\pi_k^\upp(\nset)$ is a vector bundle morphism over
$\delta_k$, by Lemma \ref{lise} it must be surjective in each fiber of
$p_k\colon E\to E_{\nset\setminus\{k\}}$. By Lemma \ref{surj_surj},
the pullback $\pi(\nset)=\delta_k^!\pi_k^\upp(\nset)\colon E\to P$ is then
surjective
in each fiber of $p_k\colon E\to E_{\nset\setminus\{k\}}$. Since the
base map is the identity on $E_{\nset\setminus\{k\}}$, $\pi(\nset)$ is surjective.
\end{proof}

Note that we have proved as well the following result.
\begin{cor}\label{nprojsurjsubm}
In the situation of Theorem \ref{nPullback}, the projection $\pi(\nset)\colon E\to
P$ is a surjective submersion.
\end{cor}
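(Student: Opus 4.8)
The plan is to read this off directly from the last paragraph of the proof of Theorem \ref{nPullback}, which already contains all the substantive work. Fix any $k\in\nset$. There we showed that, viewing $E=E_{\nset}$ as a vector bundle over $E_{\nset\setminus\{k\}}$ via $p_k$ and $P$ as a vector bundle over $E_{\nset\setminus\{k\}}$ via $p'_k$, the map $\pi(\nset)=\delta_k^!\pi_k^\upp(\nset)\colon E\to P$ is a vector bundle morphism over the identity on $E_{\nset\setminus\{k\}}$ which is moreover surjective in each fiber (this fiberwise surjectivity being precisely the conclusion of Lemma \ref{surj_surj} applied at the end of that proof).

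First I would observe that the base map of this vector bundle morphism is $\id_{E_{\nset\setminus\{k\}}}$, which is trivially a surjective submersion. Then I would invoke Lemma \ref{lise} with $A=E$, $B=P$, $f=\id_{E_{\nset\setminus\{k\}}}$ and $\phi=\pi(\nset)$: since $\phi$ is a homomorphism of vector bundles over $f$ that is surjective in each fiber, and since $f$ is a surjective submersion, Lemma \ref{lise} yields at once that $\pi(\nset)\colon E\to P$ is a surjective submersion.

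There is no real obstacle here, since the only two hypotheses of Lemma \ref{lise} -- fiberwise surjectivity of $\pi(\nset)$ and submersivity of the base map -- are respectively the content of the final line of the proof of Theorem \ref{nPullback} and the triviality of the identity map. The corollary simply records that, once one has established that $\pi(\nset)$ is a vector bundle morphism over the identity that is surjective on fibers, feeding this back into Lemma \ref{lise} upgrades the bare surjectivity proved in the theorem to the statement that $\pi(\nset)$ is in fact a surjective submersion.
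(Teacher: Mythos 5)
Your proposal is correct and is exactly the argument the paper intends: the corollary is stated with "Note that we have proved as well the following result," and the implicit proof is precisely to combine the fiberwise surjectivity of $\pi(\nset)$ over $\id_{E_{\nset\setminus\{k\}}}$ (the last step of the proof of Theorem \ref{nPullback}) with the "if" direction of Lemma \ref{lise}. Your write-up makes this implicit step explicit, with the same choice of bundles and base map.
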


\subsection{Cores of a multiple vector bundle}\label{cores}
Given a double vector bundle $(D,A,B,M)$, the intersection
$(p^D_B)^{-1}(0^B_M)\cap (p^D_A)^{-1}(0^A_M)$ is called the
\textbf{core} of the double vector bundle $(D,A,B,M)$. It has a
natural vector bundle structure over $M$, which is often denoted
$q_C\colon C\to M$. In this section, we explain the cores of multiple
vector bundles.  These cores have also been defined using a different
notation by Alfonso Gracia-Saz and Kirill Mackenzie in
\cite{GrMa12}. 

Let $\E$ be a multiple vector bundle with absolute base $M:=E_\emptyset$. 
For each $S\subseteq \N$ and each $k\in S$, we have the zero section
$0^{\E, S}_{S\setminus \{k\}}\colon E_{S\setminus\{k\}}\to E_{S}$,
$e\mapsto 0^{E_{S}}_e$. For each $R\subseteq S\subseteq \N$,
all compositions of $\# S-\# R$ composable zero sections, starting with some
$0^{R\cup \{i\}}_R\colon E_R\to E_{R\cup\{i\}}$, for some $i\in S\setminus R$,
and ending into $E_S$, are equal and the obtained map is written
$0^{\E, S}_R\colon E_R\to E_S$. In particular, we set $0^{\E, S}_S=\id_{E_S}$. 
If it is clear from the context, which multiple vector bundle 
we are considering, we write $0^S_R:=0^{\E, S}_R$. 
The image of $e\in E_R$ under $0^S_R$ is denoted by
$\nvbzero{S}{e}$, and the image of $E_R$ under $0^S_R$ is written $\nvbzero{S}{R}$.
For better readability we sometimes 
write $\nvbzero{S}{M}:=\nvbzero{S}{\emptyset}$ 
and $\nvbzero{E}{R}:=\nvbzero{\nset}{R}$. 

Choose a subset $S\subseteq \N$ and $j,k\in S$ with $j\neq k$. Then
\[
\begin{tikzcd}
  E_S\ar[r,"p^S_{k}"] \ar[d,"p^S_{j}"] & E_{S\setminus \{k\}}\ar[d,"p^{S\setminus \{k\}}_{j} "]\\
  E_{S\setminus \{j\}} \ar[r,"p^{S\setminus \{j\}}_{k} "] &
  E_{S\setminus \{j,k\}}
\end{tikzcd} 
\]
is a double vector bundle, which has therefore a core
\[E^{S}_{\{j,k\}}:=(p^S_{S\setminus\{j\}})\inv\left(\nvbzero{S\setminus
      \{j\}}{S\setminus \{j,k\}}\right)\cap
  (p^S_{S\setminus\{k\}})\inv\left(\nvbzero{S\setminus
      \{k\}}{S\setminus \{j,k\}}\right)\,.
\]
This core has then an induced
vector bundle structure over $E_{S\setminus \{j,k\}}$ with projection 
$(p^{S\setminus \{j\}}_{S\setminus \{k\}}\circ p^S_{S\setminus \{j\}})\arrowvert_{E^{S}_{\{j,k\}}}$, which we denote
by $c_{\{j,k\}}^S\colon E^{S}_{\{j,k\}}\to E_{S\setminus \{j,k\}}$. 
This is a special case of the side cores, as the following proposition shows. 

\begin{prop}\label{ncoredef}
Let $\E$ be a multiple vector bundle, $S\subseteq \N$ a finite subset
and $J\subseteq S$ non-empty.
The $(S,J)$-core
\[
	E^{S}_{J}:=\bigcap_{j\in J}(p^S_{j})\inv\left(
	\nvbzero{S\setminus \{j\}}{S\setminus J}\right)\,,
\]
is a smooth embedded submanifold of $E_S$ and inherits a vector bundle 
structure over $E_{S\setminus J}$ with projection 
$c_{J}^S:=(\E(S\to S\setminus J))\arrowvert_{E^{S}_{J}}
\colon E^{S}_{J}\to E_{S\setminus J}$. 
In particular, for $J=\{s\}$ of cardinality 1, we get $E^S_J=E_S$ 
and $c^S_J=p^S_s$. 
\end{prop}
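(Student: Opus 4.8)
The plan is to dispose of the degenerate case first and then reduce the general statement to the \emph{top core} of a face, where the $n$-pullback machinery of Theorem~\ref{nPullback} can be applied. For $\#J=1$, say $J=\{s\}$, the only zero-section target is $\nvbzero{S\setminus\{s\}}{S\setminus\{s\}}$, the image of $0^{S\setminus\{s\}}_{S\setminus\{s\}}=\id_{E_{S\setminus\{s\}}}$, hence all of $E_{S\setminus\{s\}}$; thus $E^S_{\{s\}}=(p^S_s)\inv(E_{S\setminus\{s\}})=E_S$ and $c^S_{\{s\}}=p^S_s$, which is the final assertion. For general non-empty $J$ I would invoke Proposition~\ref{sub-k-fold}: the face $\E^{S,S\setminus J}$ is a $\#J$-fold vector bundle with total space $E_S$ and absolute base $E_{S\setminus J}$, whose codimension-one projections are exactly the $p^S_j$ with $j\in J$ and whose iterated zero sections are the $0^{S\setminus\{j\}}_{S\setminus J}$. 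Under this identification $E^S_J$ is precisely the intersection over $j\in J$ of the preimages of these total zero sections, i.e.\ the top core of $\E^{S,S\setminus J}$, and $c^S_J$ is its projection to the absolute base; so it suffices to treat this top core.

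For embeddedness I would use the $n$-pullback. Applying Theorem~\ref{nPullback} to the face $\E^{S,S\setminus J}$ produces its pullback $P$, embedded in $\prod_{j\in J}E_{S\setminus\{j\}}$, together with the map $\pi\colon E_S\to P$, $e\mapsto (p^S_j(e))_{j\in J}$, which is a surjective submersion by Corollary~\ref{nprojsurjsubm}. By construction $E^S_J=\pi\inv(P_0)$, where $P_0\subseteq P$ consists of the tuples $(x_j)_{j\in J}$ with each $x_j\in\nvbzero{S\setminus\{j\}}{S\setminus J}$. Since all components of a point of $P$ share the same image in the absolute base $E_{S\setminus J}$, and zero sections are injective, a short check identifies $P_0$ with the image of the iterated total zero section of the $\#J$-fold vector bundle $\P$; being a composition of zero-section embeddings, this total zero section is a closed embedding, so $P_0$ is an embedded submanifold of $P$. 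As the preimage of an embedded submanifold under the surjective submersion $\pi$, the set $E^S_J$ is then an embedded submanifold of $E_S$.

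For the vector bundle structure I would fix any $j_0\in J$ and view $E_S$ as the total space of the single vector bundle $p^S_{j_0}\colon E_S\to E_{S\setminus\{j_0\}}$. The $j_0$-defining condition forces $p^S_{j_0}(E^S_J)\subseteq\nvbzero{S\setminus\{j_0\}}{S\setminus J}$, and since every total zero $0^S_{S\setminus J}(\bar m)$ already lies in $E^S_J$ this image equals $N:=\nvbzero{S\setminus\{j_0\}}{S\setminus J}$, an embedded submanifold identified with $E_{S\setminus J}$ via the zero section. To apply Lemma~\ref{emb_prod} I must check that for each $n_0\in N$ the slice $E^S_J\cap(p^S_{j_0})\inv(n_0)$ is a linear subspace of the $p^S_{j_0}$-fibre. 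Here the double vector bundle axioms enter: for $j\in J\setminus\{j_0\}$ the projection $p^S_j$ is a morphism of vector bundles from $(E_S\to E_{S\setminus\{j_0\}})$ to $(E_{S\setminus\{j\}}\to E_{S\setminus\{j_0,j\}})$, so it intertwines $+_{S\setminus\{j_0\}}$ with $+_{S\setminus\{j_0,j\}}$; combined with the identity $0^{S\setminus\{j\}}_{S\setminus J}=0^{S\setminus\{j\}}_{S\setminus\{j_0,j\}}\circ 0^{S\setminus\{j_0,j\}}_{S\setminus J}$ and the injectivity of zero sections, this shows that sums and scalar multiples of elements satisfying the $j$-condition again satisfy it, and that the fibre zero does too. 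Hence the slice is a subspace, Lemma~\ref{emb_prod} endows $E^S_J\to N\cong E_{S\setminus J}$ with a vector bundle structure, and its projection is $p^{S\setminus\{j_0\}}_{S\setminus J}\circ p^S_{j_0}=\E(S\to S\setminus J)$ restricted to $E^S_J$, that is $c^S_J$. Finally, the interchange laws in the underlying double vector bundles show that the additions $+_{S\setminus\{j\}}$ for the various $j\in J$ all restrict to one and the same operation, so the resulting structure does not depend on $j_0$.

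The main obstacle is the fibrewise-linearity verification in the last paragraph: embeddedness comes essentially for free from Theorem~\ref{nPullback}, but showing that the several defining conditions are simultaneously preserved under a single fixed fibre addition requires carefully tracking how each $p^S_j$ transports zero elements, and it is precisely here that the compatibility (interchange) of the vector bundle structures in the underlying double vector bundles is indispensable.
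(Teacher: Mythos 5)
Your proposal is correct and follows essentially the same route as the paper's proof: both realise $E^S_J$ as the preimage, under the surjective submersion $\pi^S_J\colon E_S\to P^S_J$ of Theorem \ref{nPullback} and Corollary \ref{nprojsurjsubm} applied to the face $\E^{S,S\setminus J}$, of the embedded zero-section submanifold of the pullback, and both then obtain the vector bundle structure from Lemma \ref{emb_prod} together with the interchange laws to show the additions over the various $j\in J$ agree. Your fibrewise-linearity verification simply spells out what the paper dismisses with ``all the additions clearly preserve $E^S_J$'', so there is no substantive difference.
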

\begin{proof}
	That $E^S_J$ is a submanifold of $E_S$ follows from Theorem \ref{nPullback}:
	Consider the $(S,S\setminus J)$-face of $\E$, the $\#J$-fold vector bundle 
	$\E^{S,S\setminus J}$. We denote the corresponding $\#J$-pullback by $P^S_J$. 
	This is the total space of an $\#J$-fold vector bundle $\P^{S}_J$ with 
	absolute base $E_{S\setminus J}$. The image of $E_{S\setminus J}$ under 
	any $\# J$ composable zero sections of $P^S_J$, 
	$Z:=\nvbzero{P^S_J}{E_{S\setminus J}}$ is an embedded submanifold of $P^S_J$. 
	By Corollary \ref{nprojsurjsubm} the $\# J$-fold projection 
	$\pi^S_J\colon E_S\to P^S_J$ is a surjective submersion. $E^S_J$ is 
	the preimage of $Z$ under $\pi^S_J$ and is thus a smooth embedded 
	submanifold of $E_S$. 

	The vector bundle structure is similar to the case $n=2$. Any two elements 
	$e,e'\in E^S_J$ with $c^S_J(e)=c^S_J(e')=:b$ can be added over any 
	$p^S_j$, for $j\in J$, since $p^S_j(e)=0^{S\setminus\{j\}}_b=p^S_j(e')$. 
	All the additions clearly preserve $E^S_J$. For any $j\in J$, 
	$\nvbzero{S\setminus\{j\}}{S\setminus J}$ is an embedded submanifold of
	$E_{S\setminus\{j\}}$ and we get a unique vector bundle structure 
	$E^S_J\to\nvbzero{S\setminus\{j\}}{S\setminus J}$ according to Lemma 
	\ref{emb_prod}. The interchange laws in all the double vector bundles 
	$(E_S,E_{S\setminus\{j_1\}},E_{S\setminus\{j_2\}},E_{S\setminus\{j_1,j_2\}})$ 
	imply that after identification of $\nvbzero{S\setminus\{j\}}{S\setminus J}$ 
	with $E_{S\setminus J}$ all the additions coincide: Since we have
	$\nvbzero{S}{\nvbzero{S\setminus\{j_1\}}{b}}=\nvbzero{S}{b}=
	\nvbzero{S}{\nvbzero{S\setminus\{j_2\}}{b}}$, we find easily 
	\begin{equation}\label{computation_core_add}
		\begin{split}
		e\dvplus{}{S\setminus\{j_1\}}e'&=\left(e\dvplus{}{S\setminus\{j_2\}}
		\nvbzero{S}{\nvbzero{S\setminus\{j_2\}}{b}}\right)
		\dvplus{}{S\setminus\{j_1\}}\left(\nvbzero{S}{\nvbzero{S\setminus\{j_2\}}{b}}
		\dvplus{}{S\setminus\{j_2\}}e'\right)\\
		&=\left(e\dvplus{}{S\setminus\{j_1\}}\nvbzero{S}{\nvbzero{S\setminus\{j_1\}}{b}}\right)
		\dvplus{}{S\setminus\{j_2\}}\left(\nvbzero{S}{\nvbzero{S\setminus\{j_1\}}{b}}
		\dvplus{}{S\setminus\{j_1\}}e'\right)
		=e\dvplus{}{S\setminus\{j_2\}}e'.
		\end{split}
	\end{equation}
	Therefore, $E^S_J$ has a well-defined vector bundle structure over 
	$E_{S\setminus J}$.
\end{proof}

We begin by proving that a side core can be constructed `by stages'.
\begin{lemma}\label{core_of_core}
  Let $\E$ be a multiple vector bundle and $S\subseteq \N$. Choose
  $K\subseteq J\subseteq S$. 
  Then
\begin{equation}\label{core_induction}
  E^S_{J}=\left\{e\in E^S_{K}\mid  p^S_{j}(e)\in \nvbzero{S\setminus \{j\}}{S\setminus J}, j\in J\setminus K, 
	\text{ and }c^S_{K}(e)\in \nvbzero{S\setminus K}{S\setminus J}\right\}\,. 
\end{equation}
\end{lemma}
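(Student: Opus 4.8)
The identity \eqref{core_induction} is a set-theoretic equality of subsets of $E_S$; the smooth and vector bundle structures on $E^S_J$ are already provided by Proposition \ref{ncoredef}, so the plan is to verify \eqref{core_induction} by a double inclusion, checking membership element by element. The entire content is the bookkeeping of nested projections and zero sections, and I would isolate at the outset the two structural facts used throughout: the \emph{coherence of zero sections}, $0^T_R=0^T_{R'}\circ 0^{R'}_R$ for $R\subseteq R'\subseteq T$ together with $\E(T\to R')\circ 0^T_{R'}=\id$, and the \emph{functoriality of projections}, $\E(S\to S\setminus K)=\E(S\setminus\{k\}\to S\setminus K)\circ p^S_k$ for $k\in K$. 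From coherence I first record the two consequences needed: for $R\subseteq R'\subseteq T$ one has $\nvbzero{T}{R}\subseteq\nvbzero{T}{R'}$, and an element $x\in E_T$ lies in $\nvbzero{T}{R}$ if and only if $x=0^T_R(\E(T\to R)(x))$. Since $E^S_K$ is a core I take $K\neq\emptyset$ (as is implicit in the statement), which lets me pick some $k\in K$.

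For the inclusion $\subseteq$ in \eqref{core_induction}, take $e\in E^S_J$, so $p^S_j(e)\in\nvbzero{S\setminus\{j\}}{S\setminus J}$ for every $j\in J$. Membership $e\in E^S_K$ follows because $S\setminus J\subseteq S\setminus K$, whence $p^S_k(e)\in\nvbzero{S\setminus\{k\}}{S\setminus J}\subseteq\nvbzero{S\setminus\{k\}}{S\setminus K}$ for each $k\in K$. The condition for $j\in J\setminus K$ is immediate from $e\in E^S_J$. For the last condition, pick any $k\in K$ and use functoriality to write $c^S_K(e)=\E(S\to S\setminus K)(e)=\E(S\setminus\{k\}\to S\setminus K)\bigl(p^S_k(e)\bigr)$; since $p^S_k(e)\in\nvbzero{S\setminus\{k\}}{S\setminus J}$ and coherence shows that $\E(S\setminus\{k\}\to S\setminus K)$ maps this zero-section image onto $\nvbzero{S\setminus K}{S\setminus J}$, we conclude $c^S_K(e)\in\nvbzero{S\setminus K}{S\setminus J}$.

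For the reverse inclusion, let $e$ lie in the right-hand side. For $j\in J\setminus K$ the required condition $p^S_j(e)\in\nvbzero{S\setminus\{j\}}{S\setminus J}$ is given outright. The real point is to upgrade, for each $k\in K$, the weaker membership $p^S_k(e)\in\nvbzero{S\setminus\{k\}}{S\setminus K}$ coming from $e\in E^S_K$ to the stronger $p^S_k(e)\in\nvbzero{S\setminus\{k\}}{S\setminus J}$. Here I apply the characterization of zero-section images: $p^S_k(e)=0^{S\setminus\{k\}}_{S\setminus K}(y)$ with $y=\E(S\setminus\{k\}\to S\setminus K)(p^S_k(e))=c^S_K(e)$ by functoriality; the hypothesis $c^S_K(e)\in\nvbzero{S\setminus K}{S\setminus J}$ gives $c^S_K(e)=0^{S\setminus K}_{S\setminus J}(z)$ for $z=\E(S\setminus K\to S\setminus J)(c^S_K(e))$, and coherence then yields $p^S_k(e)=0^{S\setminus\{k\}}_{S\setminus K}\bigl(0^{S\setminus K}_{S\setminus J}(z)\bigr)=0^{S\setminus\{k\}}_{S\setminus J}(z)\in\nvbzero{S\setminus\{k\}}{S\setminus J}$. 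Together with the $j\in J\setminus K$ conditions this gives $e\in E^S_J$.

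The argument is conceptually light, and I expect the main obstacle to be purely notational: correctly tracking the chain $S\setminus J\subseteq S\setminus K\subseteq S\setminus\{k\}$ and checking that each composite of zero sections and projections is the one coherence predicts. The genuinely load-bearing hypothesis is the last condition $c^S_K(e)\in\nvbzero{S\setminus K}{S\setminus J}$, which is exactly what powers the passage from ``zero over $E_{S\setminus K}$'' to ``zero over $E_{S\setminus J}$'' in the reverse inclusion; I would flag that it cannot be dropped and that the statement requires $K$ non-empty.
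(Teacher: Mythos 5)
Your proof is correct and follows essentially the same route as the paper's: both directions are verified by elementwise double inclusion, with the forward inclusion using $\nvbzero{S\setminus\{k\}}{S\setminus J}\subseteq\nvbzero{S\setminus\{k\}}{S\setminus K}$ and the computation $c^S_K(e)=p^{S\setminus\{k\}}_{S\setminus K}(p^S_k(e))$, and the reverse inclusion identifying the base point of $p^S_k(e)$ with $c^S_K(e)$ and composing zero sections to upgrade membership from $\nvbzero{S\setminus\{k\}}{S\setminus K}$ to $\nvbzero{S\setminus\{k\}}{S\setminus J}$. Your explicit isolation of the coherence and functoriality identities, and the observation that $K\neq\emptyset$ is implicitly required, merely make transparent what the paper uses tacitly.
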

\begin{proof}
  For simplicity, we denote here by $X$ the set on the
  right-hand side of the equation. First, take $e\in E^S_J$. Then
  since $p^S_j(e)\in \nvbzero{S\setminus \{j\}}{S\setminus J}$ for all
  $j\in J$, and since $K\subseteq J$, we have for $k\in K$:
  $p^S_k(e)=\nvbzero{S\setminus \{k\}}{e_k}$ for some
  $e_k\in E_{S\setminus J}$. Since
  $\nvbzero{S\setminus \{k\}}{e_k}=\nvbzero{S\setminus
    \{k\}}{\nvbzero{S\setminus K}{e_k}}$, we find
  $p^S_k(e)\in \nvbzero{S\setminus \{k\}}{S\setminus K}$ for all
  $k\in K$. Therefore $e\in E^S_K$ with
  $p^S_{j}(e)\in \nvbzero{S\setminus \{j\}}{S\setminus J}$ for
  $j\in J\setminus K$ and we only need to check that
  $c^S_{K}(e)\in \nvbzero{S\setminus K}{S\setminus J}$ in order to
  find that $e\in X$.  But for any choice of $k\in K$, we find
  $c^S_K(e)=p^S_{S\setminus K}(e)=p^{S\setminus\{k\}}_{S\setminus K}(p^S_k(e))=p^{S\setminus \{k\}}_{S\setminus K}(\nvbzero{S\setminus
    \{k\}}{e_k})=0^{S\setminus K}_{e_k}$ with
  $e_k\in E_{S\setminus J}$.  \medskip

  Conversely, take $e\in X$. Then since $e\in E^S_K$ we find for each
  $k\in K$ an element $e_k\in E_{S\setminus K}$ such that
  $p^S_k(e)=0^{S\setminus\{k\}}_{e_k}$. But then
  $e_k=p^{S\setminus\{k\}}_{S\setminus\{K\}}(0^{S\setminus\{k\}}_{e_k})=p^{S\setminus\{k\}}_{S\setminus\{K\}}(p^S_k(e))=p^S_{S\setminus\{K\}}(e)=c^S_K(e)\in
  \nvbzero{S\setminus K}{S\setminus J}$ shows that
  $e\in (p^S_k)\inv\left(\nvbzero{S\setminus \{k\}}{S\setminus
      J}\right)$. Since $k\in K$ was arbitrary and also
  $e\in (p^S_j)\inv\left(\nvbzero{S\setminus \{k\}}{S\setminus
      J}\right)$ for all $j\in J\setminus K$, we find that $e\in E^S_J$.
	\end{proof}

Using this, we prove the following theorem.
\begin{thm}\label{corenvb}
  Let $\E$ be a multiple vector bundle. For each
  $S\subset\N$ and $J\subseteq S$ non-empty, the 
  space $E^{S}_J$ is the total space of an $(\# S-\# J+1)$-fold
  vector bundle in the following way. 
	
  The partition $\rho^S_J=\{J, \{s_1\},\ldots,\{s_{(\# S-\# J+1)}\}\}$
  of $S$ into the set $J$ and sets with one element gives rise to a
  $(\# S-\# J+1)$-cube category $\lozenge^S_J:=\lozenge^{\rho^S_J}$ as
  in section \ref{diagdec}.  We will again write
  $[\nu]:=\cup_{K\in\nu}K$ for any subset $\nu\subseteq\rho^S_J$.  Now
  define $\E^S_J\colon \lozenge^S_J\to\Man$ by setting
  $\E^S_J(\nu)=E^{[\nu]}_J$ if $J\in \nu$ and $\E^S_J(\nu)=E_{[\nu]}$
  if $J\not\in \nu$ and define the morphisms by
	\begin{align*}
		\E^S_J(\nu_1\to \nu_2)&=\E([\nu_1]\to[\nu_2])|_{E^{[\nu_1]}_J}
				\colon E^{[\nu_1]}_J\to E^{[\nu_2]}_J \,, &
				\text{ if }& J\in \nu_2\subseteq \nu_1,\\ 
		\E^S_J(\nu_1\to \nu_2)&=\E([\nu_1]\to [\nu_2])
				\colon E_{[\nu_1]}\to E_{[\nu_2]} \,, &
				\text{ if }& \nu_2\subseteq \nu_1\not\ni J\\		
		\E^S_J(\nu_1\to \nu_2)&=\E([\nu_1]\setminus J\to [\nu_2] )\circ c^{[\nu_1]}_J		
				\colon E^{[\nu_1]}_J\to E_{[\nu_2]} \,, &
				\text{ if }& \nu_2\subseteq \nu_1,\ J\in \nu_1\setminus\nu_2\,.
	\end{align*}
	Then $\E^S_J$ is a $(\# S-\# J+1)$-fold vector bundle.
\end{thm}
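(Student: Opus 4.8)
The plan is to verify the three conditions of Definition \ref{def_n_fold} directly for the functor $\E^S_J\colon\lozenge^S_J\to\Man$: that it is a well-defined functor, that each generating arrow carries a vector bundle structure, and that each elementary square is a double vector bundle. That all the nodes are smooth manifolds is already guaranteed: for $J\in\nu$ the node $\E^S_J(\nu)=E^{[\nu]}_J$ is an embedded submanifold of $E_{[\nu]}$ by Proposition \ref{ncoredef}, while for $J\notin\nu$ the node is simply $E_{[\nu]}$.

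For functoriality I would first check that the restricted maps actually land in the claimed cores. The key point is that for $J\in\nu$ and a singleton block $\{s\}\in\nu$ with $s\notin J$, the projection $p^{[\nu]}_s$ maps $E^{[\nu]}_J$ into $E^{[\nu]\setminus\{s\}}_J$: writing $e\in E^{[\nu]}_J$ so that $p^{[\nu]}_j(e)=\nvbzero{[\nu]\setminus\{j\}}{b_j}$ with $b_j\in E_{[\nu]\setminus J}$ for each $j\in J$, the commuting of the projections of $\E$ gives $p^{[\nu]\setminus\{s\}}_j(p^{[\nu]}_s(e))=p^{[\nu]\setminus\{j\}}_s(\nvbzero{[\nu]\setminus\{j\}}{b_j})$, which is again a zero element precisely because $s\notin J$. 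Functoriality across the three cases of the morphism definition then reduces to the functoriality of $\E$ together with Lemma \ref{core_of_core}, which exhibits the relevant cores as cores-of-cores and so makes the two factorisations of a composite through an intermediate $\nu$ agree; the identities are immediate.

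Next I would equip each generating arrow $\nu\to\nu\setminus\{K\}$ with its vector bundle structure, distinguishing the two types of blocks. If $K=\{s\}$ is a singleton, then for $J\notin\nu$ the arrow is the projection $p^{[\nu]}_s$ of $\E$ itself, already a vector bundle, while for $J\in\nu$ it is the restriction of $p^{[\nu]}_s$ to $E^{[\nu]}_J$; to see the latter is a vector bundle I would apply Lemma \ref{emb_prod}, the domain and codomain being embedded submanifolds, the restriction fibrewise linear, and the map surjective onto $E^{[\nu]\setminus\{s\}}_J$ because the $s$-direction zero section $\nvbzero{[\nu]}{e'}$ of any $e'\in E^{[\nu]\setminus\{s\}}_J$ again lies in $E^{[\nu]}_J$, by the same zero-on-zero computation as above. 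If $K=J$ (which forces $J\in\nu$), the arrow is exactly the core projection $c^{[\nu]}_J\colon E^{[\nu]}_J\to E_{[\nu]\setminus J}$, a vector bundle by Proposition \ref{ncoredef}.

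Finally, and this is where I expect the real work, I would check that every elementary square of $\lozenge^S_J$ is a double vector bundle. Squares built from two singleton removals are unproblematic: for $J\notin\nu$ they are elementary squares of $\E$, hence double vector bundles by hypothesis, and for $J\in\nu$ they are restrictions of such squares to the cores, inheriting the structure as embedded sub-double-vector-bundles, the restricted additions being closed by a short interchange-law argument. The main obstacle is the mixed square pairing a singleton removal $\{s\}$ with the removal of the block $J$,
\[
\begin{tikzcd}
E^{[\nu]}_J\ar[r,"p^{[\nu]}_{s}"]\ar[d,"c^{[\nu]}_J"'] & E^{[\nu]\setminus\{s\}}_J\ar[d,"c^{[\nu]\setminus\{s\}}_J"]\\
E_{[\nu]\setminus J}\ar[r,"p^{[\nu]\setminus J}_{s}"'] & E_{([\nu]\setminus\{s\})\setminus J}
\end{tikzcd}
\]
in which $E^{[\nu]}_J$ carries both the side addition $+_{[\nu]\setminus\{s\}}$ (the restriction of the $s$-addition of $E_{[\nu]}$) and the core addition over $E_{[\nu]\setminus J}$ produced in Proposition \ref{ncoredef} by identifying $\nvbzero{[\nu]\setminus\{j\}}{[\nu]\setminus J}$ with $E_{[\nu]\setminus J}$ and restricting $+_{[\nu]\setminus\{j\}}$ for some fixed $j\in J$. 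The crux is the interchange law between these two additions, and I would prove it by fixing such a $j$ and observing that both additions are then restrictions of the two commuting additions $+_{[\nu]\setminus\{s\}}$ and $+_{[\nu]\setminus\{j\}}$ of the double vector bundle face $(E_{[\nu]},E_{[\nu]\setminus\{s\}},E_{[\nu]\setminus\{j\}},E_{[\nu]\setminus\{s,j\}})$ of $\E$, so that their interchange law \eqref{add_add} restricts to the one required, exactly as in the computation \eqref{computation_core_add}. The compatibility of scalar multiplications and zero sections follows by the same restriction principle, and assembling these checks shows that $\E^S_J$ satisfies Definition \ref{def_n_fold} and is therefore an $(\# S-\# J+1)$-fold vector bundle.
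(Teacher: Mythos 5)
Your proof is correct, but it takes a genuinely different route from the paper's. The paper proves Theorem \ref{corenvb} by induction over $\# J$: the base case $\# J=1$ is Proposition \ref{sub-k-fold}, and in the inductive step the core $E^{S'}_J$ is exhibited, via Lemma \ref{core_of_core}, as the upper side core of a triple vector bundle which is a $3$-dimensional face of the multiple vector bundle $\E^S_{J'}$ already obtained for $J'=J\setminus\{j_l\}$; the crucial mixed square -- your square pairing $p^{[\nu]}_s$ with $c^{[\nu]}_J$ -- is then a double vector bundle because side cores of triple vector bundles are double vector bundles, a fact the paper imports from \cite{Mackenzie05b} and explicitly acknowledges as a dependency in its discussion of related work. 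You instead verify the axioms directly and uniformly in $\# J$, with no induction: your key observation is that, for a fixed $j\in J$, both the side addition and the core addition on $E^{[\nu]}_J$ are restrictions of the two additions of the face double vector bundle $(E_{[\nu]};E_{[\nu]\setminus\{s\}},E_{[\nu]\setminus\{j\}};E_{[\nu]\setminus\{s,j\}})$ of $\E$ -- legitimate because all the additions $+_{[\nu]\setminus\{j\}}$, $j\in J$, agree on the core by Proposition \ref{ncoredef} and \eqref{computation_core_add} -- so the interchange law \eqref{add_add} of that face restricts to the one required. In effect you reprove, from the interchange laws of $\E$, exactly the special case of Mackenzie's side-core result that the paper cites. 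What each approach buys: yours is self-contained and makes visible precisely which structure of $\E$ is used, at the cost of more case-by-case checking (closure of the cores under the restricted additions, surjectivity onto $E^{[\nu]\setminus\{s\}}_J$ via zero sections, the applications of Lemma \ref{emb_prod}); the paper's induction is shorter and reuses its established structure theory, but buries the essential verification inside the cited external result. Both arguments leave the routine compatibilities (scalar multiplications, zero sections, the purely singleton squares) at a comparable level of sketchiness, so your level of rigour matches the paper's.
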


\begin{proof}
The nodes of $\E^S_J$ are given by $E^{S'}_J$ for $J\subseteq S'\subseteq S$ 
and $E_I$ for $I\subseteq S\setminus J$. The generating arrows are given
by $p^I_i\colon E_I\to E_{I\setminus\{i\}}$ for $i\in I\subseteq S\setminus J$
and $c^{S'}_J\colon E^{S'}_J \to E_{S'\setminus J}$ and 
$p^{S'}_i|_{E^{S'}_J}\colon E^{S'}_J\to E^{S'\setminus\{i\}}_{J}$ for 
$i\in S'\setminus J$. In the following we just write $p^{S'}_i$
for the restriction $p^{S'}_i|_{E^{S'}_J}$.

For $\# J<\# S$ we prove  by 
induction over $\# J=:l$ that this defines a multiple vector bundle. 
For $J=\{s\}$ of cardinality 1 it is easy to see that $\E^S_J=\E^{S,\emptyset}$,
which is an $\# S$-fold vector bundle by Proposition \ref{sub-k-fold}.

Now assume that $E^S_{\{j_1,\ldots,j_{l-1}\}}$ is the total space of a
$(\# S-l+2)$-fold vector bundle. Choose
$j_l\in S\setminus\{j_1,\ldots,j_{l-1}\}$, $S'\subseteq S$ with
$\{j_1,\ldots,j_l\}=:J\subseteq S'$, and choose $i\in S'\setminus J$. 
Then by the induction hypothesis and
Proposition \ref{sub-k-fold}, {\small\[
\begin{tikzcd}
  E^{S'}_{\{j_1,\ldots,j_{l-1}\}} \ar[rr,"p^{S'}_{j_l}"]\ar[rd,"c^{S'}_{\{j_1,\ldots,j_{l-1}\}}"]\ar[dd,"p^{S'}_i"'] & & E^{{S'}\setminus\{j_l\}}_{\{j_1,\ldots,j_{l-1}\}} \ar[dd,, near start]\ar[rd] & \\
  & E_{{S'}\setminus\{j_1,\ldots,j_{l-1}\}} \ar[rr, crossing over] & & E_{{S'}\setminus\{j_1,\ldots,j_l\}} \ar[dd] \\
  E^{{S'}\setminus\{i\}}_{\{j_1,\ldots,j_{l-1}\}}\ar[rr, near end]\ar[rd,] & & E^{{S'}\setminus\{i,j_l\}}_{\{j_1,\ldots,j_{l-1}\}}\ar[rd] & \\
  & E_{{S'}\setminus\{j_1,\ldots,j_{l-1},i\}} \ar[uu, crossing over, leftarrow]\ar[rr] & & E_{{S'}\setminus\{i,j_1,\ldots,j_l\}}
\end{tikzcd} 
\,,
\]}
is a triple vector bundle, and by \eqref{core_induction}, its upper side core is
\[
\begin{tikzcd}
  E^{S'}_{J}\ar[r,"c^{S'}_{J}"] \ar[d,"p^{S'}_{i}"] & E_{{S'}\setminus J}\ar[d,"p^{{S'}\setminus J}_{i} "]\\
  E^{{S'}\setminus\{i\}}_J \ar[r,"c^{{S'}\setminus \{i\}}_J"] &
  E_{{S'}\setminus (J\cup\{i\})}.
\end{tikzcd} 
\]
Hence this diagram is a double vector bundle (see for example \cite{Mackenzie05b}) 
and, as before, all commutative squares in our
$(\# S-l+1)$-cube diagram are double vector bundles.
\end{proof}

If $l=\# S$, then $J=S$ and $E^S_S$ has a vector bundle structure over
$M$ with projection $c^S_S=\E(S\to \emptyset)$.  The nodes at the
source of only one arrow of $\E^S_J$ are the nodes $E_{\{i\}}$ of $\E$
for $i\in S\setminus J$, and the $(J,J)$-core $c^J_J\colon E^J_J\to M$
of the $\# J$-fold vector bundle bundle $\E^{J,\emptyset}$.

We have then for each $\nu\subseteq \rho^S_J$ an inclusion
$\eta^J(\nu)\colon \E^S_J(\nu)\hookrightarrow E_{[\nu]}$, since
$\E^S_J(\nu)$ is an embedded submanifold of $E_{[\nu]}$ for all 
$\nu\subseteq \rho^S_J$.

\begin{example}\label{decJcore}
Given the $n$-fold vector bundle $\E^\A$ defined in section \ref{dec_n_fold}, 
its $(S,J)$-core $(\E^\A)^S_J$ has nodes
$(\E^\A)^S_J(\nu)=\prod^M_{\nu'\subseteq \nu}A_{[\nu']}$ 
for $\nu\subseteq \rho^S_J:=\rho^S_J$ and can thus be 
identified with $\E^{\mathcal A}_{\rho^S_J}$ defined as in section \ref{diagdec}.
In particular, $(E^\A)^S_S=A_S$.

For instance, for $n=3$ (see Example \ref{decTVB}) we have decomposed cores
\begin{equation*}
	\begin{split}
	E^{\{1,2,3\}}_{\{1,2\}}&=A_{\{3\}}\times_M A_{\{1,2\}}\times_M A_{\{1,2,3\}}\,,\quad 
  E^{\{1,2,3\}}_{\{2,3\}}=A_{\{1\}}\times_M A_{\{2,3\}}\times_M A_{\{1,2,3\}}\,,\\ 
  E^{\{1,2,3\}}_{\{1,3\}}&=A_{\{2\}}\times_M A_{\{1,3\}}\times_M A_{\{1,2,3\}}\,.
	\end{split}
\end{equation*}
\end{example}

\begin{rmk}\label{CoresofFaces}
  \begin{enumerate}
    \item[(a)] Given an $n$-fold vector bundle $\E$ it follows directly from the definitions 
that the cores of the faces of $\E$ are given by the faces of the cores of $\E$. That is,
$(\E^{S,\emptyset})^{S}_J=(\E^S_J)^{\rho^S_J,\emptyset}$ for $J\subseteq S$.
\item[(b)] Note also that \eqref{core_induction} can now be written 
$E^S_J=(E^S_K)^{\rho^S_K}_{\rho^J_K}$.
\item[(c)] For $I,J\subseteq S$ with $I\cap J=\emptyset$ the intersection of the cores 
$E^S_I\cup E^S_J$ is the iterated core 
$(E^S_J)^{\rho^S_J}_{\{\{i\}_{i\in I}\}}=(E^S_I)^{\rho^S_I}_{\{\{j\}_{j\in J}\}}$
\item[(d)] In the case of $I\cup J\neq \emptyset$ the intersection of the core
$E^S_I\cup E^S_J$ is given by $E^S_{I\cap J}$ instead. 
\end{enumerate}
\end{rmk}

\begin{prop}\label{ncoremorph}
Given a morphism $\tau\colon \E\to\F$ of multiple
vector bundles, we have for any $J\subseteq S\subseteq \N$ an 
induced core morphism of the $(\#S-\#J+1)$-fold vector bundles 
$\tau^S_J\colon \E^S_J\to \F^S_J$ defined by 
	\begin{align*}\tau^S_J(\nu)&=\tau([\nu])|_{E^{[\nu]}_J}\colon E^{[\nu]}_J\to F^{[\nu]}_J
          &\text{ for }&\nu\subseteq \rho^S_J \text{ with } J\in\nu\\
		  \tau^S_J(\nu)&=\tau([\nu])\colon E_{[\nu]}\to F_{[\nu]}								
          &\text{ for }&\nu\subseteq \rho^S_J \text{ with } J\not\in \nu\,,
	\end{align*}
where we consider $E^{[\nu]}_J$ and $F^{[\nu]}_J$ as subsets of 
$E_{[\nu]}$ and $F_{[\nu]}$, respectively.
Furthermore, $(\cdot)^S_J$ is a covariant functor from multiple vector bundles to multiple vector bundles.
\end{prop}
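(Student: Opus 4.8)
The plan is to deduce every property of $\tau^S_J$ from the corresponding property of $\tau$ by restriction to cores, so the only genuinely new input is that $\tau$ preserves the core submanifolds. First I would record the elementary fact that a morphism of multiple vector bundles sends composable zero sections to composable zero sections: since each square attached to a generating arrow is a vector bundle homomorphism, $\tau(S)(\nvbzero{S}{e})=\nvbzero{S}{\tau(R)(e)}$ for every composable zero section $0^S_R$, and hence $\tau(S)$ carries $\nvbzero{S}{R}$ into $\nvbzero{S}{R}$ (now in $\F$).

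With this in hand I would prove that the restriction is well defined. Fix $\nu\subseteq\rho^S_J$ with $J\in\nu$, so $J\subseteq[\nu]$, and take $e\in E^{[\nu]}_J$. For each $j\in J$ we have $p^{[\nu]}_j(e)=\nvbzero{[\nu]\setminus\{j\}}{b}$ with $b\in E_{[\nu]\setminus J}$. Applying naturality of $\tau$ to the arrow $[\nu]\to[\nu]\setminus\{j\}$ and then the zero-section remark,
\[
p^{[\nu]}_j(\tau([\nu])(e))=\tau([\nu]\setminus\{j\})(p^{[\nu]}_j(e))=\nvbzero{[\nu]\setminus\{j\}}{\tau([\nu]\setminus J)(b)}\in\nvbzero{[\nu]\setminus\{j\}}{[\nu]\setminus J}.
\]
As this holds for all $j\in J$, we get $\tau([\nu])(e)\in F^{[\nu]}_J$, so $\tau^S_J(\nu):=\tau([\nu])|_{E^{[\nu]}_J}$ indeed lands in $F^{[\nu]}_J$ and the formula in the statement makes sense.

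Next I would check that $\tau^S_J$ is a morphism of $(\#S-\#J+1)$-fold vector bundles. Naturality with respect to each of the three families of arrows of $\lozenge^S_J$ (those avoiding $J$, the core projections $c^{[\nu_1]}_J$, and those keeping $J$) reduces directly to naturality of $\tau$: in every case $\E^S_J(\nu_1\to\nu_2)$ and $\F^S_J(\nu_1\to\nu_2)$ are restrictions of the structure maps $\E([\nu_1]\to[\nu_2])$ and $\F([\nu_1]\to[\nu_2])$, so the squares commute by restricting the commuting squares of $\tau$ — for the core-projection arrows one inserts the intermediate identity $\tau([\nu_1]\setminus J)\circ c^{[\nu_1]}_J=c^{[\nu_1]}_J\circ\tau([\nu_1])|_{E^{[\nu_1]}_J}$, itself a restriction of naturality for $[\nu_1]\to[\nu_1]\setminus J$. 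For the vector bundle homomorphism condition I would treat the generating arrows of $\lozenge^S_J$ listed in Theorem \ref{corenvb}: for $p^I_i$ with $i\in I\subseteq S\setminus J$ the relevant square is literally a square of $\tau$, hence a homomorphism by hypothesis; for $p^{S'}_i|_{E^{S'}_J}$ with $i\in S'\setminus J$ the core $E^{S'}_J$ is closed under $+_{S'\setminus\{i\}}$ by the interchange law (as $i\notin J$), so it is a sub-vector-bundle of $E_{S'}\to E_{S'\setminus\{i\}}$ and the restriction of the fiberwise-linear $\tau(S')$ stays fiberwise linear; and for $c^{S'}_J$, the core addition is by Proposition \ref{ncoredef} the restriction of $+_{S'\setminus\{j\}}$ for any $j\in J$, over which $\tau(S')$ is additive, so $\tau(S')|_{E^{S'}_J}$ is linear along $c^{S'}_J$. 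In all three cases the square is a vector bundle homomorphism.

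Functoriality is then immediate: $(\id_\E)^S_J$ restricts the identity and so equals $\id_{\E^S_J}$, and for a composite $\E\xrightarrow{\tau}\F\xrightarrow{\sigma}\mathbb G$ we have, on the nodes with $J\in\nu$, $(\sigma\circ\tau)([\nu])|_{E^{[\nu]}_J}=\sigma([\nu])|_{F^{[\nu]}_J}\circ\tau([\nu])|_{E^{[\nu]}_J}$ precisely because $\tau([\nu])$ maps $E^{[\nu]}_J$ into $F^{[\nu]}_J$ by the well-definedness step, while on the nodes with $J\notin\nu$ it is just $\sigma([\nu])\circ\tau([\nu])$; hence $(\sigma\circ\tau)^S_J=\sigma^S_J\circ\tau^S_J$, and $\E^S_J$ is a multiple vector bundle by Theorem \ref{corenvb}. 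I expect the only real obstacle to be the well-definedness step, i.e.\ verifying that $\tau$ genuinely preserves the core submanifolds; everything after that is a mechanical restriction argument, with the mild care needed to identify, for the two sub-bundle squares, which addition of $\E$ induces the relevant vector bundle structure on the core.
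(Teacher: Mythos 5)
Your proof is correct and follows essentially the same route as the paper's: reduce everything to the well-definedness claim $\tau([\nu])(E^{[\nu]}_J)\subseteq F^{[\nu]}_J$, establish it via naturality of $\tau$ together with preservation of (composite) zero sections, and then obtain linearity and functoriality by restriction. You simply spell out in more detail what the paper compresses into ``linearity follows directly from linearity of $\tau$'', namely the case-by-case check over the three types of generating arrows of $\lozenge^S_J$, which is a faithful elaboration rather than a different argument.
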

\begin{proof}
For $J\not\in\nu$ there is nothing to show as $\E^S_J(\nu)=\E([\nu])$
and $\F^S_J(\nu)=\F([\nu])$ and thus all the maps are well defined
vector bundle morphisms. 

For $J\in\nu$ it remains to be shown that $\tau^S_J$ is well defined,
that is $\tau([\nu])(E^{[\nu]}_J)\subseteq F^{[\nu]}_J$. 
Linearity follows then directly from linearity of $\tau$. 
The manifold $E^{[\nu]}_J$ is defined as the set of all elements of $E_{[\nu]}$ 
that project to $\nvbzero{\E,[\nu]\setminus\{j\}}{[\nu]\setminus J}$ 
for all $j\in J$. Since for all $I\subseteq \nset$, 
$\tau(I)\colon E_{I}\to F_{I}$ is a vector bundle homomorphism 
over $\tau(I\setminus \{i\})$ for all $i\in I$, the image of 
$e\in E^{[\nu]}_J$ under $\tau{[\nu]}$ thus projects to 
$\nvbzero{\F,[\nu]\setminus\{j\}}{[\nu]\setminus J}$ in 
$F_{[\nu]\setminus \{j\}}$ and is an element of $F^{[\nu]}_J$.

Functoriality follows directly from the definition: in the case of $J\not\in\nu$
	\[(\sigma\circ\tau)^S_J(\nu)=(\sigma\circ\tau)([\nu])=
	\sigma([\nu])\circ\tau([\nu])=\sigma^S_J(\nu)\circ\tau^S_J(\nu)\,,\] 
	whereas for $J\in\nu$
	\[(\sigma\circ\tau)^S_J(\nu)=(\sigma\circ\tau)([\nu])|_{E^{[\nu]}_J}=
	\sigma([\nu])|_{F^{[\nu]}_J}\circ\tau([\nu])|_{E^{[\nu]}_J}=\sigma^S_J(\nu)\circ\tau^S_J(\nu)\,.\qedhere\]
\end{proof}

From Theorem \ref{nPullback} we obtain easily the following proposition;
the $n$-fold analogon of the core sequences for double vector bundles,
which were defined by Kirill Mackenzie in \cite{Mackenzie05b}. They are
important in the proof of the existence of decompositions of $n$-fold vector 
bundles. We call them the ultracore sequences of $\E$. 

\begin{prop}\label{nCoreSES}
  Let $\E$ be an $n$-fold vector bundle.
  For each $k\in\nset$, we have a short exact sequence
  \[\begin{tikzcd}
      0\ar[r]& (p^{\nset\setminus\{k\}}_\emptyset)^!E^{\nset}_{\nset}\ar[r,"\iota"] \ar[d] & E\ar[d]\ar[r,"\pi(\nset)"]& P\ar[d]\ar[r]& 0\\
      &E_{\nset\setminus\{k\}}&E_{\nset\setminus\{k\}}&E_{\nset\setminus\{k\}}&
    \end{tikzcd}
  \]
  of vector bundles over $E_{\nset\setminus\{k\}}$, where $P$ is the $n$-pullback defined in Theorem \ref{nPullback}.
\end{prop}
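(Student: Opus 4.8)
The plan is to separate the claim into two parts: the exactness of the sequence, which follows almost directly from Theorem \ref{nPullback}, and the identification of $\ker\pi(\nset)$ with $(p^{\nset\setminus\{k\}}_\emptyset)^!E^\nset_\nset$, which carries all the content. Fix $k$ and view $\pi(\nset)\colon E\to P$ as a vector bundle morphism over $\id_{E_{\nset\setminus\{k\}}}$, as observed just after Theorem \ref{nPullback}. The last paragraph of the proof of that theorem shows that $\pi(\nset)$ is surjective in each fibre of $p_k\colon E\to E_{\nset\setminus\{k\}}$, and Corollary \ref{nprojsurjsubm} shows it is a surjective submersion. Setting $K:=\pi(\nset)\inv(Z)$, where $Z\subseteq P$ is the image of the zero section of $p'_k\colon P\to E_{\nset\setminus\{k\}}$, the set $K$ is the preimage of an embedded submanifold under a submersion, hence embedded in $E$, and it is a linear subspace in each $p_k$-fibre; Lemma \ref{emb_prod} then equips it with a vector bundle structure over $E_{\nset\setminus\{k\}}$ as a subbundle of $E$. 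Fibrewise surjectivity of $\pi(\nset)$ makes $0\to K\to E\xrightarrow{\pi(\nset)}P\to 0$ exact over $E_{\nset\setminus\{k\}}$, so everything reduces to identifying $K$.

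First I would describe $K$ explicitly. Since $\pi(\nset)$ is a vector bundle morphism sending zeros to zeros, the zero of $P$ over $e$ is $(p_1(\nvbzero{\nset}{e}),\ldots,p_n(\nvbzero{\nset}{e}))$, so an element $d$ with $p_k(d)=e$ lies in $K$ if and only if $p_j(d)=\nvbzero{\nset\setminus\{j\}}{p^{\nset\setminus\{k\}}_j(e)}$ for all $j\neq k$. The map $\iota$ is then the $n$-fold analogue of the double core inclusion $\iota_B(b,c)=0^D_b+_Ac$ from \eqref{core-seq}: it sends a pair $(e,c)$, with $e\in E_{\nset\setminus\{k\}}$ and $c\in E^\nset_\nset$ over the common point $m=p^{\nset\setminus\{k\}}_\emptyset(e)=c^\nset_\nset(c)$, to the element of $K$ obtained from $\nvbzero{\nset}{e}$ by superimposing $c$ in the ultracore direction.

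The main obstacle is the construction and well-definedness of $\iota$. For $n=2$ one simply sets $\iota(e,c)=\nvbzero{\nset}{e}+_jc$ for the unique $j\neq k$, the summands being addable because $p_j(\nvbzero{\nset}{e})=\nvbzero{\nset\setminus\{j\}}{m}=p_j(c)$. For $n\geq 3$ this fails: one has $p_j(c)=\nvbzero{\nset\setminus\{j\}}{m}$ but $p_j(\nvbzero{\nset}{e})=\nvbzero{\nset\setminus\{j\}}{p^{\nset\setminus\{k\}}_j(e)}$, and these agree only when $\nset\setminus\{j,k\}=\emptyset$. I would therefore build $\iota$ by induction on $n$, in parallel with the proof of Theorem \ref{nPullback}: via the identification $P\cong(\delta_k)^!P^\upp_k$ used there, $K$ is the pullback along $\delta_k$ of the kernel of the $(n-1)$-fold projection $\pi^\upp_k(\nset)\colon E\to P^\upp_k$ of the face $\E^{\nset,\{k\}}$, and the inductive hypothesis identifies that kernel with the ultracore $E^\nset_{\nset\setminus\{k\}}$ of $\E^{\nset,\{k\}}$; the iterated-core description of Lemma \ref{core_of_core} together with Theorem \ref{corenvb} then collapses this to $(p^{\nset\setminus\{k\}}_\emptyset)^!E^\nset_\nset$. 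Concretely this amounts to lifting $c$ through the zero sections in the directions $j\neq k$ until it becomes addable to $\nvbzero{\nset}{e}$; that the result is independent of the order of these lifts, and $C^\infty(E_{\nset\setminus\{k\}})$-linear, follows from the interchange laws of the constituent double vector bundles, i.e.\ the coincidence of additions recorded in \eqref{computation_core_add}. Finally $\iota$ is a vector bundle isomorphism onto $K$: it is injective because $\iota(e,c)=\nvbzero{\nset}{e}$ forces $c=\nvbzero{\nset}{m}$, and surjective by the fibre count $\rk E-\rk P=\rk E^\nset_\nset$, which completes the proof.
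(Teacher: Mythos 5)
Your reduction of the proposition to identifying $K=\ker\pi(\nset)$, your explicit description of $K$, and the $n=2$ case are all correct, and the idea of ``lifting $c$ through the zero sections until it becomes addable to $\nvbzero{\nset}{e}$'' is in fact exactly what the paper's map \eqref{InjCore} does. The genuine gap is in the inductive step that you yourself flag as carrying all the content. You claim that, via $P\cong(\delta_k)^!P^\upp_k$, the kernel $K$ is identified by the inductive hypothesis with (a pullback of) the ultracore $E^{\nset}_{\nset\setminus\{k\}}$ of the face $\E^{\nset,\{k\}}$, and that Lemma \ref{core_of_core} and Theorem \ref{corenvb} then ``collapse'' this to $(p^{\nset\setminus\{k\}}_\emptyset)^!E^\nset_\nset$. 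Both halves fail. First, the inductive hypothesis applied to the $(n-1)$-fold vector bundle $\E^{\nset,\{k\}}$ produces short exact sequences only over its nodes $E_{\nset\setminus\{i\}}$, $i\neq k$; it says nothing about the bundle structure $q_k\colon P^\upp_k\to P^\low_k$, with respect to which $\pi^\upp_k(\nset)$ must be viewed in order to recover $\pi(\nset)=\delta_k^!\pi^\upp_k(\nset)$. That $k$-direction structure is extra data constructed in the proof of Theorem \ref{nPullback}; it is not one of the structures of the $(n-1)$-fold vector bundle $\P^\upp_k$, so the induction simply cannot be applied to it. Second, the asserted identification is false for rank reasons: the fibre of $K$ over a zero $\nvbzero{\nset\setminus\{k\}}{m}$ consists precisely of the elements of $E^\nset_\nset$ over $m$, so $\rk K=\rk E^\nset_\nset$; whereas by Theorem \ref{corenvb} the ultracore $E^\nset_{\nset\setminus\{k\}}$ is a double vector bundle with sides $E_{\{k\}}$ and $E^{\nset\setminus\{k\}}_{\nset\setminus\{k\}}$ and core $E^\nset_\nset$, so any pullback of it has rank $\rk E^{\nset\setminus\{k\}}_{\nset\setminus\{k\}}+\rk E^\nset_\nset$. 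Lemma \ref{core_of_core} exhibits $E^\nset_\nset$ as an iterated core, i.e.\ a subobject, of $E^\nset_{\nset\setminus\{k\}}$; it does not make them isomorphic, and no ``collapse'' can bridge this rank discrepancy.

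If you want an induction of this shape, you must induct over a face whose set of directions still contains $k$: for $j_1\neq k$, the inductive hypothesis applied to $\E^{\nset,\{j_1\}}$ does give a sequence over the correct base $E_{\nset\setminus\{k\}}$, with kernel $(p^{\nset\setminus\{k\}}_{\{j_1\}})^!E^\nset_{\nset\setminus\{j_1\}}$; inside that kernel, $K$ is cut out by the one remaining condition $p_{j_1}(d)\in\nvbzero{\nset\setminus\{j_1\}}{E_{\nset\setminus\{j_1,k\}}}$, which reduces $E^\nset_{\nset\setminus\{j_1\}}$ to its core $E^\nset_\nset$ in the spirit of Lemma \ref{core_of_core}. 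The paper instead avoids induction altogether: given a kernel element $e$ with projections $e_J$, it subtracts the zeros $\nvbzero{E}{e_{\nset\setminus\{k,j_1,\ldots,j_{l-1}\}}}$ one direction at a time, lands in $E^\nset_\nset$, and then recovers $e$ by the explicit iterated sum \eqref{InjCore}; this simultaneously defines $\iota$ and proves exactness, with no rank count. Note finally that your rank count $\rk E-\rk P=\rk E^\nset_\nset$, invoked for surjectivity of $\iota$ onto $K$, is itself unproved in your argument (it follows from the observation above about the fibre of $K$ over the zero section), so even that closing step needs the kind of explicit analysis you were trying to bypass.
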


\begin{proof}
  By Theorem \ref{nPullback}, the map $\pi(\nset)\colon E\to P$ is
  a surjective vector bundle morphism over $\id_{E_{\nset\setminus\{k\}}}$.
			
  Take any $e$ in the kernel of $\pi(\nset)$ considered as vector
  bundle morphism over $E_{\nset\setminus\{k\}}$. Denote its
  projection in $E_{J}$ for any $J\subseteq \nset\setminus\{k\}$ by
  $e_{J}$, with $m:=e_{\emptyset}\in M$. Write
  $\nset\setminus\{k\}=\{j_1,\ldots,j_{n-1}\}$.  Define now
  recursively
  \begin{align*}f^0&:=e\,,\qquad  f^l:=f^{l-1}\dvminus{}{\nset\setminus\{j_l\}}\nvbzero{E}{e_{\nset\setminus\{k,j_1,\ldots,j_{l-1}\}}}\,.
  \end{align*}
  Then it is easy to show by induction that
  $p^{\nset}_{I}(f^l)=
  \nvbzero{I}{e_{I\cap(\nset\setminus\{k,j_1,\ldots,j_l\})}}$. 
  The above implies
  that $f^{n-1}$ projects to $\nvbzero{I}{m}$ for all
  $I\subseteq \nset$. It is thus an element of the ultracore
  $E^{\nset}_{\nset}$, and we denote it by $z:=f^{n-1}$.
	
  Now
  \begin{equation}\label{InjCore}
	\begin{split}
		e&=\Biggl(\biggl(\Bigl(z
    \dvplus{}{\nset\setminus\{j_{n-1}\}}\nvbzero{E}{e_{\{j_{n-1}\}}}\Bigr)
    \dvplus{}{\nset\setminus\{j_{n-2}\}}\nvbzero{E}{e_{\{j_{n-1},j_{n-2}\}}}\biggr)
    \dvplus{}{\nset\setminus\{j_3\}}\ldots\Biggr)
                   \dvplus{}{\nset\setminus\{j_1\}}\nvbzero{E}{e_{\nset\setminus\{k\}}}\\
                 &=:\iota(z,e_{\nset\setminus\{k\}})\,.
  \end{split}
	\end{equation}  
  and the defined map
  $\iota\colon E^\nset_\nset\times_M E_{\nset\setminus\{k\}}\to E$ is
  clearly an injective morphism of vector bundles over
  $E_{\nset\setminus\{k\}}$, making the sequence exact. We let the
  reader check that $\iota$ does not depend on the chosen order of the
  set $\nset\setminus\{k\}$.
\end{proof}

\section{Splittings of $n$-fold vector bundles}\label{sec_ex_splitting}
In this section we achieve our main goal in this paper: we prove that
any $n$-fold vector bundle admits a (non-canonical) linear
splitting. We begin by discussing the notions of linear splitting
versus linear decomposition. Then we prove inductively our main
theorem, and finally we explain how $n$-fold vector bundles can now be
defined using \emph{$n$-fold vector bundle atlases}.

 \subsection{Splittings and decompositions of $n$-fold vector bundles}
	\label{splitdecnvb}

        Let $\E$ be an $n$-fold vector bundle. This gives rise to a
        family $\A$ of smooth vector bundles
        $\A=(q_J\colon A_J\to M)_{J\subseteq\nset,\,\# J<\infty}$ over
        $M=\E(\emptyset)$ defined by $A_{\{i\}}=E_{\{i\}}$ for
        $i=1,\ldots,n$ and $A_J=E^J_J$ for $\# J\geq 2$. By Example
        \ref{decJcore}, if $\E$ is already a decomposed $n$-fold
        vector bundle, then each element of the family of vector
        bundles defining it appears as one of the cores of $\E$.  This
        is why we call the vector bundles $A_J=E^J_J$ the \textbf{building
          bundles of $\E$}.

We can then consider the decomposed $n$-fold vector bundles $\E^\A$ 
and $\overline{\E}:=\E^{\overline{\A}}$ defined in 
Section \ref{dec_n_fold}. We call $\E^\A$ the decomposed 
$n$-fold vector bundle associated to $\E$ and $\overline{\E}$ the 
vacant, decomposed $n$-fold vector bundle associated to $\E$.

\begin{mydef}\label{def_n-dec}
A \textbf{linear splitting} of the $n$-fold vector bundle
$\E$ is a monomorphism
$\Sigma\colon\overline{ \E}\to \E$ of $n$-fold vector bundles,
such that for $i=1,\ldots,n$, $\Sigma(\{i\})\colon E_{\{i\}}\to E_{\{i\}}$ is the
identity.

A \textbf{decomposition} of the $n$-fold vector bundle $\E$ is a
natural isomorphism $\mathcal S\colon \E^{\mathcal A}\to \E$
of $n$-fold vector bundles over the identity maps
$\mathcal S(\{i\})=\id_{E_{\{i\}}}\colon A_{\{i\}}\to E_{\{i\}}$ such that
additionally the induced core morphisms $\S^I_I(\{I\})$ are the
identities $\id_{E^I_I}$ for all $I\subseteq\nset$.
\end{mydef}

Linear splittings and decompositions of double vector bundles are
equivalent to each other.  Given a splitting $\Sigma$, define the
decomposition by
$\mathcal{S}(a_m,b_m,c_m):=\Sigma(a_m,b_m)+_B(0_{b_m}^D+_A c_m)
=\Sigma(a_m,b_m)+_A(0_{a_m}^D+_B c_m)$.  Conversely, given a
decomposition $\mathcal{S}$ define the splitting by
$\Sigma(a_m,b_m):=\mathcal{S}(a_m,b_m,0_m^C)$.  These two
constructions are obviously inverse to each other. 
We prove here that a similar equivalence holds true in the general 
case of $n$-fold vector bundles. 

\medskip

A linear splitting $\Sigma$ of an $n$-fold vector bundle $\E$ and
decompositions $\S^{I}$ of the highest order cores -- the $(n-1)$-fold
vector bundles $\E^{\nset}_{I}$ for all $I\subseteq\nset$ with
$\# I=2$ -- are called \textbf{compatible} if they coincide on all
possible intersections.  That is,
$\S^I(\{\{k\}_{k\in\nset\setminus I}\})|_{\overline{\E}(\nset\setminus
  I)}= \Sigma(\nset\setminus I)$
and
$\S^I(\rho^{\nset}_{I})|_{(E^\A)^{\nset}_I\cap (E^\A)^\nset_J}=
\S^J(\rho^{\nset}_{J})|_{(E^\A)^{\nset}_I\cap (E^\A)^\nset_J}$
for all $I,J\subseteq\nset$ of cardinality 2. Note that we view here
the total spaces of $(\E^\A)^{\nset}_I$, $(\E^\A)^{\nset}_J$ and of
$\E^{\nset}_I$ and $\E^\nset_J$ as embedded in $E^\A=\E^\A(\nset)$ and
$E=\E(\nset)$, respectively.  Also recall that
$\E^\nset_I(\{k\}_{k\in\nset\setminus I})=\E(\nset\setminus I)$ by
definition.

\begin{thm}\label{thm_split_dec_n}
\begin{enumerate}
\item[(a)] Let $\mathcal S$ be a decomposition of an $n$-fold vector 
bundle $\E\colon\square^n\to \Man$. Then the composition 
$\Sigma=\mathcal S\circ\iota\colon\overline{\E}\to\E$, with 
$\iota$ defined as in \eqref{emb_spl_dec}, is a splitting of $\E$. 
Furthermore, the core morphisms $\S^\nset_J\colon\E^{\rho_{J}}\to\E^{\nset}_J$
are decompositions of $\E^{\nset}_J$ for all $J\subseteq\nset$ and 
these decompositions and the linear splitting are compatible.

\item[(a)] Conversely, given a linear splitting $\Sigma$ of $\E$ and 
	compatible decompositions of the highest order cores $\E^{\nset}_J$ 
    with top maps $\S^J\colon (E^\A)^\nset_J\to E^{\nset}_J$, 
    for $J\subseteq \nset$ with $\# J=2$, there 
	exists a unique decomposition $\mathcal S$ of $\E$ such that 
	$\Sigma=\mathcal S\circ\iota$ and such that the core morphisms 
	of $\S$ are given by $\S^\nset_J(\rho^\nset_J)=\S^J$ for all $J$. 
\end{enumerate}
\end{thm}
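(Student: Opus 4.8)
The plan is to treat the two assertions separately: the first follows almost formally from the functoriality of the core operation in Proposition~\ref{ncoremorph}, while the converse requires an explicit construction, which I would organise recursively, using the double vector bundle equivalence recalled above as the starting point.

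For the first assertion, I would observe that $\iota\colon\overline{\E}\to\E^\A$ from \eqref{emb_spl_dec} is a monomorphism and that $\iota(\{i\})$ and $\S(\{i\})$ are identities, so $\Sigma=\S\circ\iota$ is a monomorphism with $\Sigma(\{i\})=\id_{E_{\{i\}}}$, i.e.\ a linear splitting. To see that each induced map $\S^\nset_J$ is a decomposition of $\E^\nset_J$, I would apply the functor $(\cdot)^\nset_J$ of Proposition~\ref{ncoremorph} to $\S$: functoriality yields $(\S^{-1})^\nset_J=(\S^\nset_J)^{-1}$, so $\S^\nset_J$ is an isomorphism, and by Example~\ref{decJcore} its source $(\E^\A)^\nset_J$ is the decomposed bundle $\E^\A_{\rho^\nset_J}$ associated to $\E^\nset_J$. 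The identity conditions of Definition~\ref{def_n-dec} for $\S^\nset_J$ then reduce, using Remark~\ref{CoresofFaces}, to those already assumed for $\S$: on the singleton objects $\{s_i\}$ one recovers $\S(\{s_i\})=\id$, on the object $\{J\}$ one recovers $\S(J)|_{E^J_J}=\S^J_J(\{J\})=\id$, and on the common ultracore one recovers $\S^\nset_\nset(\{\nset\})=\id$. Finally, compatibility is automatic: the maps $\S^\nset_J$ and $\Sigma$ are all restrictions of the single map $\S$ to embedded submanifolds of $E^\A$, hence agree on every intersection.

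For the converse I would build $\S\colon\E^\A\to\E$ by generalising the double case $\S(a,b,c)=\Sigma(a,b)+_B(0^D_b+_Ac)$. Writing a point of $E^\A(\nset)$ as a tuple $(a_J)_{J\subseteq\nset}$ over a common $m\in M$, I would let $\S$ be $\Sigma\bigl((a_{\{i\}})_i\bigr)$ corrected, fiber by fiber and in order of increasing $\#J$, by the core contributions produced from the $a_J$ with $\#J\ge2$. The contribution of $a_J$ is inserted through the embedding $E^\nset_J\hookrightarrow E$ and the decomposition $\S^\nset_J$ of the core $\E^\nset_J$; for $\#J=2$ this is the given $\S^J$, and for $\#J\ge3$ the needed $\S^\nset_J$ is obtained from the $\#J=2$ data by applying the first assertion to the $\S^J$ (so that, via Remark~\ref{CoresofFaces}, each higher core $\E^\nset_K$ inherits a decomposition from every size-two $J\subseteq K$), the compatibility hypothesis being exactly what guarantees that these several induced decompositions of $\E^\nset_K$ coincide.

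The main obstacle is the well-definedness of this assembled correction, namely that the result is independent of the order in which the core contributions are summed and is simultaneously additive in all $n$ directions, so that $\S$ is a genuine morphism of $n$-fold vector bundles; I expect this to follow from repeated use of the interchange-law identity \eqref{computation_core_add} for additions of core elements together with the compatibility relations. Granting this, $\S$ is a fiberwise linear bijection over the identity on $M$ --- it matches the filtration of $E^\A$ by the cardinality $\#J$ with the corresponding filtration of $E$ coming from the ultracore sequences of Proposition~\ref{nCoreSES}, inducing $\Sigma$ on the bottom piece and $\S^\nset_J$ on the $J$-graded piece --- and is therefore an isomorphism. Uniqueness is the simplest point: any decomposition with $\S\circ\iota=\Sigma$ and the prescribed core morphisms agrees with the constructed $\S$ on the vacant part and on every core, and since a morphism of $n$-fold vector bundles is additive in each direction and every point of $E^\A$ is a sum of such ``pure'' pieces, the two decompositions coincide.
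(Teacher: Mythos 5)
Your part (a) is exactly the paper's argument: $\Sigma=\S\circ\iota$ is a monomorphism with identity components on the $E_{\{i\}}$, the functor $(\cdot)^\nset_J$ of Proposition~\ref{ncoremorph} makes each $\S^\nset_J$ an isomorphism, Example~\ref{decJcore} and Remark~\ref{CoresofFaces} identify its source and building bundles, and compatibility is automatic because all the maps are restrictions of the single map $\S$. For part (b) your overall strategy is also the paper's (assemble $\S$ from $\Sigma$ and the $\S^J$ by zero-lifted core contributions, with interchange laws ensuring consistency, then uniqueness by multi-additivity and surjectivity by a dimension/filtration count), but the organisation differs. You process components $a_J$ one at a time in order of increasing $\# J$, which forces you to first produce decompositions of all higher cores $\E^\nset_K$, $\# K\geq 3$; your observation that these are induced by applying part (a) to any $\S^{J'}$ with $J'\subseteq K$, $\# J'=2$, and that compatibility makes the result independent of the choice of $J'$ (since $(E^\A)^\nset_K\subseteq (E^\A)^\nset_{J_1}\cap(E^\A)^\nset_{J_2}$), is correct --- indeed the paper itself uses this mechanism in the corollary following Theorem~\ref{thm_dec_n_vb}. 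The paper's proof of (b) avoids higher cores entirely: it orders the two-element subsets $J_1,\ldots,J_{\binom{n}{2}}$, filters $E^\A$ by the decomposed subbundles $E^k$ whose components $A_I$ are allowed to be nonzero only when $\# I=1$ or $J_i\subseteq I$ for some $i\leq k$, and at step $k+1$ writes $\mathbf{x}\in E^{k+1}$ as a sum of $\mathbf{y}\in E^k$ and a \emph{single} element $\mathbf{z}\in(E^\A)^\nset_{J_{k+1}}$ carrying all the newly allowed components at once.

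The genuine shortfall in your proposal is that the step you flag as the ``main obstacle'' --- order-independence of the assembled sum and its simultaneous linearity over all $n$ directions --- is precisely the mathematical content of the paper's proof of (b); everything else is bookkeeping, so expecting it to follow from \eqref{computation_core_add} is not yet a proof. The paper's filtration is the device that makes this verification tractable: at each step well-definedness reduces to one interchange-law identity (the two lifts of $\mathbf{z}$ through $s$ and through $t$, where $J_{k+1}=\{s,t\}$, agree because $\S^{J_{k+1}}(\mathbf{z})$ is a core element of the double vector bundle $(E;E_{\nset\setminus\{s\}},E_{\nset\setminus\{t\}};E_{\nset\setminus\{s,t\}})$), and linearity over each $E_{\nset\setminus\{j\}}$ follows from linearity of $\S^k$ and $\S^{J_{k+1}}$ plus the interchange laws, because $\mathbf{x}\mapsto(\mathbf{y},\mathbf{z})$ is linear. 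With your finer, single-component decomposition there are strictly more interactions to control (the contributions of $a_J$ and of $a_K$ with $J\subseteq K$ both live in the $J$-core directions), so you would either have to prove order-independence of a much longer iterated sum or regroup the terms as the paper does. A smaller precision issue: the ``pure'' tuple with only $a_J$ nonzero cannot simply be fed into $\S^\nset_J$ and added to the running sum --- for the additions in $E$ to be defined, the element of $(E^\A)^\nset_J$ must carry the matching base components of the partial sum (this is why the paper's $\mathbf{z}$ retains the components $b_I$ for $I\subseteq\nset\setminus J_{k+1}$), and the correct zero-liftings must be specified.
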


\begin{proof}
  Let us consider a decomposition $\S\colon \E^\A\to\E$.  Then the
  composition $\Sigma=\mathcal S\circ\iota$ is clearly a monomorphism
  of $n$-fold vector bundles, with
  $\Sigma(\{i\})=\S(\{i\})\circ\iota(\{i\})=\id_{E_{\{i\}}}\circ
  \id_{E_{\{i\}}}=\id_{E_{\{i\}}}$. Furthermore, Proposition
  \ref{ncoremorph} implies that the restrictions $\S^\nset_J$ are
  isomorphisms of multiple vector bundles. Since for any 
  $\nu\subseteq\rho^\nset_J$ the $(\nu,\nu)$-core of $\E^\nset_J$ 
  equals $E^{[\nu]}_{[\nu]}$ which follows from Remark 
  \ref{CoresofFaces} for $J\in\nu$ and directly from the definition
  for $J\not\in \nu$, these are all the building bundles of $\E^\nset_J$.
  Now $\S^{[\nu]}_{[\nu]}=\id_{E^{[\nu]}_{[\nu]}}$ and thus
  $\S^\nset_J$ induces the identity on all building bundles of
  $\E^\nset_J$ and is therefore a decomposition. Since all $\S^\nset_J$ 
  and $\Sigma$ are defined as restrictions of the same map $\S$ they 
  are clearly compatible. 
\medskip

Conversely, assume that we have a splitting $\Sigma$ of $\E$ 
and compatible decompositions $\S^J$ of the cores $\E^\nset_J$ 
with $J\subseteq \nset$, $\# J=2$ as in (b). We prove that there 
is a unique decomposition $\S$ of $\E$ that restricts in the sense 
of (b) to $\Sigma$ and the $\S^J$. 

Let now $J_1, \ldots,J_{\binom{n}{2}}$ denote the subsets of $\nset$ 
with $\# J_k=2$. 
We define now an increasing chain of $\binom{n}{2}$ decomposed $n$-fold 
vector bundles as follows. For $k=0,\ldots,\binom{n}{2}$ define
a family of vector bundles over $M$, $\A^k=(B_I)_{I\subseteq\nset}$ 
with $B_I=A_I$ for all $I$ with either $\# I=1$ or if there is 
$i\leq k$ such that $J_i\subseteq I$; and $B_I=M$ otherwise. Now let
$\E^k:=\E^{\A^k}$ with total space $E^k:=\E^{\A^k}(\nset)$. There 
are obvious inclusions 
$\overline{\E}(\nset)=E^0\hookrightarrow E^1\hookrightarrow\ldots
\hookrightarrow E^{\binom{n}{2}}=\E^\A(\nset)$. We thus view the $E^k$ as  
submanifolds of $\E^\A(\nset)$. Note that additionally 
$(E^\A)^{\nset}_{J_i}\subseteq E^k$ for all $i\leq k$. 
Now we show that we can define a decomposition $\S$ of $\E$ 
inductively on the $E^k$ for $k=0,\ldots,\binom{n}{2}$ and that 
it is unique with respect to the given linear splittings.

Since $\E^0=\overline{\E}$ we set $\S^0:=\Sigma$ and this is clearly
unique in the sense of (b). By the compatibility condition it also
restricts to $\S^{J_i}$ on $E^0\cap (E^\A)^\nset_{J_i}$ 
for $i=1,\ldots,\binom{n}{2}$.  Take
now $k\geq 0$ and assume that we have a uniquely defined injective
morphism of $n$-fold vector bundles $\S^k\colon\E^k\to \E$ that
restricts to $\Sigma$ on $E^0$ and to $\S^{J_i}$ on
$E^k\cap(E^\A)^\nset_{J_i}$ for $i=1,\ldots,\binom{n}{2}$.
Take $\mathbf{x}=(a_I)_{I\subseteq\nset}\in E^{k+1}$. Then in 
particular $a_I=0_m^{A_I}$ if $\#I\geq 2$ and there is no $i\leq k+1$ 
with $J_i\subseteq I$. Set $\mathbf{y}:=(b_I)_{I\subseteq\nset}$ with 
$b_I=a_I$ if either $\# I=1$ or there is $i\leq k$ such that 
$J_i\subseteq I$ and $b_I=0^{A_I}_m$ otherwise. Set furthermore 
$\mathbf{z}:=(c_I)_{I\subseteq\nset}$ where $c_I=b_I$ whenever 
$I\subseteq\nset\setminus J_{k+1}$, $c_I=a_I$ whenever $J_{k+1}\subseteq I$ 
and there is no $i\leq k$ with $J_i\subseteq I$, and $c_I=0^{A_I}_m$ otherwise. 
Then $\mathbf{y}\in E^k$ and $\mathbf{z}\in (\E^\A)^\nset_{J_{k+1}}$. 
Furthermore, writing $J_{k+1}=\{s,t\}$, it is easy to check that 
\[
\mathbf{x}=\mathbf{y}\dvplus{}{\nset\setminus\{s\}}\left(
\nvbzero{\nset}{p_s(\mathbf{y})}\dvplus{}{\nset\setminus\{t\}}
\mathbf{z}\right)=\mathbf{y}\dvplus{}{\nset\setminus\{t\}}\left(
\nvbzero{\nset}{p_t(\mathbf{y})}\dvplus{}{\nset\setminus\{s\}}
\mathbf{z}\right)\,.
\] 
The last equality follows directly from the interchange law in the double vector bundle 
$(E;E_{\nset\setminus\{s\}},E_{\nset\setminus\{t\}};E_{\nset\setminus\{s,t\}})$
since $\S^{J_{k+1}}(\mathbf{z})$ is in the core of this double vector bundle. 
Thus we can define 
\[
\S^{k+1}(\mathbf{x}):=\S^k(\mathbf{y})\dvplus{}{\nset\setminus\{s\}}\left(
\nvbzero{\nset}{p_s\bigl(\S^k(\mathbf{y})\bigr)}\dvplus{}{\nset\setminus\{t\}}
\S^{J_{k+1}}(\mathbf{z})\right)=\S^k(\mathbf{y})\dvplus{}{\nset\setminus\{t\}}\left(
\nvbzero{\nset}{p_t\bigl(\S^k(\mathbf{y})\bigr)}\dvplus{}{\nset\setminus\{s\}}
\S^{J_{k+1}}(\mathbf{z})\right)\,.
\]
It is easy to check that this defines an injective morphism of 
$n$-fold vector bundles $\S^{k+1}\colon \E^{k+1}\to \E$. Linearity
over $E_{\nset\setminus\{j\}}$ follows directly from linearity of $\S^k$ 
and $\S^{J_{k+1}}$ and the interchange laws in the double vector bundles
$(E;E_{\nset\setminus\{j\}},E_{\nset\setminus\{s\}};E_{\nset\setminus\{j,s\}})$
and 
$(E;E_{\nset\setminus\{j\}},E_{\nset\setminus\{t\}};E_{\nset\setminus\{j,t\}})$
since the construction of $\mathbf{y}$ and $\mathbf{z}$ from $\mathbf{x}$ 
is linear.
If now $\mathbf{x}$ was already in $E^k$, then $\mathbf{y}=\mathbf{x}$ 
and thus $\S^{k+1}$ restricts to $\S^k$ on $E^k$ and therefore also to 
$\Sigma$. If $\mathbf{x}$ was in $(E^\A)^\nset_{J}$ for any $J\subseteq\nset$
with $\#J=2$, then $\mathbf{y}\in E^k\cap(E^\A)^\nset_J$
and by induction hypothesis $\S^k(\mathbf{y})=\S^{J}(\mathbf{y})$. 
Furthermore, $\mathbf{z}\in (E^\A)^\nset_{J_{k+1}}\cap(E^\A)^\nset_J$ 
and by the compatibility of $\S^{J_{k+1}}$ with $\S^J$ we get that
$\S^{J_{k+1}}(\mathbf{z})=\S^J(\mathbf{z})$. Thus clearly $\S^{k+1}$ 
restricts to all $\S^J$ on the intersection $E^{k+1}\cap (E^\A)^\nset_J$. 
Also it clearly is the only morphism from $\E^{k+1}$ to $\E$ 
restricting to $\S^{k}$ on $E^{k}$ and to all $\S^{J}$ and thus by 
the induction hypothesis the only morphism restricting to $\Sigma$ and 
all $\S^{J}$.
Thus we find eventually a unique injective morphism 
$\S:=\S^{\binom{n}{2}}\colon \E^\A\to \E$ that restricts to $\Sigma$ 
and all $\S^J$ for $\# J=2$. That $\S$ is surjective now follows from 
linearity and a dimension count. 
\end{proof}

\subsection{Existence of splittings}\label{ex_splitting}
In this section, we finally state and prove our main theorem.
We prove by induction that every $n$-fold vector
bundle is non-canonically isomorphic to a decomposed one. 
\begin{thm}\label{thm_dec_n_vb}
  Let $\E$ be an $n$-fold vector bundle. Then there is a linear
  splitting
  \[\Sigma\colon \overline{\E} \to \E\,,
    \]
    that is a monomorphism of $n$-fold vector bundles from the vacant,
    decomposed $n$-fold vector bundle $\overline{\E}$ associated to $\E$, 
		which was defined in Section \ref{splitdecnvb}, into $\E$.

  \end{thm}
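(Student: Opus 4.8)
The plan is to argue by induction on $n$. The base case $n=1$ is immediate, since then $\E$ is an ordinary vector bundle and $\overline{\E}=\E$, so $\Sigma=\id$; the case $n=2$ is precisely the preparatory Section \ref{2-case}. For the inductive step I assume that every $(n-1)$-fold vector bundle admits a linear splitting, and hence, by Theorem \ref{thm_split_dec_n}, a decomposition. The construction of $\Sigma$ then proceeds in two stages, exactly as Pradines did in the double case: first I produce local splittings over trivialising neighbourhoods, and then I paste them into a global one with a partition of unity.

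For the local step, fix an open $U\subseteq M$ over which all building bundles $A_J=E^J_J$ trivialise, and choose local frames. By the inductive hypothesis each $(n-1)$-fold face $\E^{\nset\setminus\{k\},\emptyset}$ carries a linear splitting; since everything over $U$ is a product, these can be taken mutually compatible, i.e. to agree after projection to every $(n-2)$-face $E_{\nset\setminus\{j,k\}}$. A compatible family of face splittings assembles precisely into a morphism $\sigma\colon\overline{\E}\an{U}\to\P\an{U}$ into the $n$-pullback of Theorem \ref{nPullback}, because $P$ is by definition the space of tuples $(e_1,\dots,e_n)$ whose components agree under the face projections, and $\overline{\E}(\nset)=\prod^M_{i\in\nset}E_{\{i\}}$ has trivial ultracore. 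It then remains to lift $\sigma$ through the surjective submersion $\pi(\nset)\colon E\to P$ of Corollary \ref{nprojsurjsubm}. By the ultracore sequence of Proposition \ref{nCoreSES}, over each base $E_{\nset\setminus\{k\}}$ the map $\pi(\nset)$ is a fibrewise surjection of vector bundles with kernel $(p^{\nset\setminus\{k\}}_\emptyset)^!E^\nset_\nset$; choosing a splitting of this short exact sequence and expressing elements of $P$ through the chosen frames, in the manner in which del Carpio--Marek treats the double case, produces the desired lift $\Sigma_U(\nset)$ and hence a local splitting $\Sigma_U\colon\overline{\E}\an{U}\to\E\an{U}$ with $\Sigma_U(\{i\})=\id_{E_{\{i\}}}$.

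To globalise, I use that, over a fixed base, the set of linear splittings of $\E$ is an affine space modelled on a $C^\infty(M)$-module of homomorphism-valued morphisms, in the spirit of Lemma \ref{module_morphisms}: the difference of two splittings is a morphism that vanishes on the singleton faces and lands, over each addition, in the appropriate core. Consequently any fibrewise affine combination of splittings, with coefficients summing to one, is again a splitting. Choosing a locally finite cover $\{U_\alpha\}$ by trivialising opens with a subordinate partition of unity $\{\chi_\alpha\}$ pulled back from $M$, I set $\Sigma:=\sum_\alpha \chi_\alpha\,\Sigma_{U_\alpha}$, the sum being the fibrewise affine combination just described. The affine structure guarantees that $\Sigma$ is again a morphism of $n$-fold vector bundles; it satisfies $\Sigma(\{i\})=\id_{E_{\{i\}}}$ because every $\Sigma_{U_\alpha}$ does and the $\chi_\alpha$ sum to one, and it is a monomorphism because it is injective in each fibre, by the same dimension count used in the proof of Theorem \ref{thm_split_dec_n}.

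The main obstacle is that the lift of $\sigma$ through $\pi(\nset)$ must respect all $n$ commuting vector bundle structures simultaneously, and not merely the single one over $E_{\nset\setminus\{k\}}$ that is used to split the ultracore sequence; checking that the chosen lift intertwines every addition $+_{\nset\setminus\{k\}}$ is the delicate point, and is what forces the careful use of the frames in the local construction. The second point requiring care is that the partition-of-unity average genuinely remains a morphism of $n$-fold vector bundles, which is exactly what the affine/torsor structure of the space of splittings secures. Once $\Sigma$ is in hand, the highest-order cores $\E^\nset_J$ with $\# J=2$ are $(n-1)$-fold and so, by the inductive hypothesis together with Theorem \ref{thm_split_dec_n}, one may upgrade $\Sigma$ to a full decomposition $\S\colon\E^\A\to\E$ with $\Sigma=\S\circ\iota$, where $\iota$ is the monomorphism of \eqref{emb_spl_dec}.
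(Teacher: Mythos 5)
Your strategy is in outline the same as the paper's: assemble splittings of the facets $\E^{\nset\setminus\{k\},\emptyset}$ into a morphism to the $n$-pullback $\P$ of Theorem \ref{nPullback}, lift it through $\pi(\nset)$ using a splitting of the ultracore sequence of Proposition \ref{nCoreSES}, correct the lift with local frames so that it becomes linear over all $n$ structures, and glue by a partition of unity. But there is a genuine gap in the way you set up the induction, and it propagates into your gluing step. Your inductive hypothesis is only the \emph{absolute} statement that every $(n-1)$-fold vector bundle admits some splitting. From this you cannot conclude that the $n$ facet splittings can be chosen \emph{compatibly}, i.e.\ agreeing on every common face $E_I$, $I\subseteq\nset\setminus\{j,k\}$. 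Your justification, ``since everything over $U$ is a product, these can be taken mutually compatible'', is circular: the facets become products only after a choice of decompositions, and the issue is precisely whether these choices can be made coherently where the facets overlap. This is why the paper proves by induction a \emph{pair} of statements: (a) compatible families of facet splittings exist, and (b) any compatible family of facet splittings extends to a splitting of the total space. The relative statement (b) is what makes (a) provable, by an inner induction over the cardinality of the faces; with only the absolute statement the induction does not close up.

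The second, related, problem is your partition-of-unity argument. The set of \emph{all} linear splittings of $\E$ is not an affine space: two splittings $\Sigma$ and $\Sigma'$ have in general different face components $\Sigma(I)\neq\Sigma'(I)$ for $I\subsetneq\nset$, so their top maps send a point $e\in\overline{\E}(\nset)$ to points lying in \emph{different} fibers of $E\to E_{\nset\setminus\{k\}}$, and no fibrewise affine combination of them is defined. An affine structure (modelled on ultracore-valued multilinear morphisms, via Proposition \ref{nCoreSES}) exists only on the set of splittings \emph{with fixed facet components}. Since you chose your compatible facet splittings separately on each $U_\alpha$, your local splittings $\Sigma_{U_\alpha}$ and $\Sigma_{U_\beta}$ do not lie over the same face maps on $U_\alpha\cap U_\beta$, so the expression $\sum_\alpha\chi_\alpha\,\Sigma_{U_\alpha}$ is not well defined, in any of the fibre directions. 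The paper avoids this by making the facet splittings global first (that is exactly the content of statement (a)); only the top map is constructed locally, and all the local top maps $\Sigma^E_{U_\alpha}$ project to the \emph{same} global $\Sigma_j$ for every $j$, which is what makes the combination $\sum_\alpha\varphi_\alpha\cdot_{\nset\setminus\{1\}}\Sigma^E_{U_\alpha}$ meaningful and, via the interchange laws, linear over every $\Sigma_j$. Both gaps are repaired simultaneously by strengthening your inductive hypothesis to the pair (a)--(b) and constructing the compatible facet splittings globally before any localisation -- which is then precisely the paper's proof.
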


  \begin{proof}
We prove the following two claims by induction over $n$.
\begin{enumerate}
\item[(a)] Given an $n$-fold vector bundle $\E$, there exist $n$
  linear splittings $\Sigma_{\nset\setminus\{k\}}$ of
  $\E^{\nset\setminus\{k\}, \emptyset}$ for $k\in\nset$, such that
  $\Sigma_{\nset\setminus\{i\}}(I)=\Sigma_{\nset\setminus\{j\}}(I)$
  for any $I\subseteq\nset\setminus\{i,j\}$.
\item[(b)] Given a family of splittings as in (a), there exists a linear
  splitting of $\E$ with $\Sigma(I)=\Sigma_{\nset\setminus\{k\}}(I)$
  whenever $I\subseteq\nset\setminus\{k\}$.
\end{enumerate}

The case of $n=1$ is trivial. Take now $n\geq 2$ and assume that
both statements are true for $l$-fold vector bundles, for $l<n$.
First, we prove (a).  This is equivalent to having splittings
$\Sigma_I$ of $\E^{I,\emptyset}$ for all $I\subsetneq\nset$ such that
$\Sigma_{I_1}(J)=\Sigma_{I_2}(J)$ whenever $J\subseteq I_1\cap
I_2$. We prove that claim with an induction over $\sharp I$. For all
$I\subseteq \nset$ with $\sharp I=1$ or $\sharp I=2$, this is
immediate.
								
Assume now that we have fixed linear splittings of $\E^{I,\emptyset}$ 
for all $I$ with $\#I =l\leq n-2$, such that for all
$J\subseteq I_1\cap I_2$, $\Sigma_{I_1}(J)=\Sigma_J(J)=\Sigma_{I_2}(J)$.  
For any $I\subsetneq\nset$ with $\#I=l+1$ we can then find by induction
hypothesis (b) a linear splitting $\Sigma_I$ of $\E^{I,\emptyset}$ which 
satisfies $\Sigma_I(J)=\Sigma_J(J)$ for all $J\subseteq I$.  Now for 
$I_1,I_2$ of cardinality $l+1$ and $J\subseteq I_1\cap I_2$, we get
$\Sigma_{I_1}(J)=\Sigma_J(J)=\Sigma_{I_2}(J)$.  This shows that part
(a) is satisfied for every $n$-fold vector bundle since we eventually
find linear splittings $\Sigma_{\nset\setminus\{k\}}$ of all
$\E_{\nset\setminus\{k\}}$ which agree on all subsets
$I\subseteq \nset$ of cardinality $\# I\leq (n-2)$.
								
We denote in the following their top maps by
    \[\Sigma_k:=\Sigma_{\nset\setminus\{k\}}(\nset\setminus\{k\})\colon 
      \prod_{i\in\nset\setminus\{k\}}^M E_{\{i\}}\to E_{\nset\setminus
        \{k\}}\,.\] It is easy to check that given $m\in M$ and
    $e_i\in E_{\{i\}}$ with $p^{\{i\}}_\emptyset(e_i)=m$ for
    $i=1,\ldots,n$, the tuple
    $(\Sigma_1(e_2,\ldots,e_n),\Sigma_2(e_1,e_3,\ldots,e_n),\ldots,\Sigma_n(e_1,\ldots,e_{n-1})$
    is an element of $P$.  Short exact sequences of vector bundles are
    always non-canonically split, so we can take a splitting
    $\theta_1$ of the short exact sequence of vector bundles over
    $E_{\nset\setminus \{1\}}$ in Proposition \ref{nCoreSES}.  Define
    $\Sigma^E_1\colon \prod_{i\in\nset}^M E_{\{i\}}\to E$ by
    \begin{equation}\label{sigma1}
				\Sigma^E_1\colon (e_1,\ldots,e_n)\mapsto \theta_1\bigl(\Sigma_1(e_2,\ldots,e_n),\Sigma_2(e_1,e_3,\ldots,e_n),\ldots,\Sigma_n(e_1,\ldots,e_{n-1})\bigr)\,.
			\end{equation}
                        This is a vector bundle morphism over the
                        linear splitting $\Sigma_1$ of
                        $E_{\nset\setminus \{1\}}$ such that
		\begin{equation}\label{correctprojections}
		p_j\bigl(\Sigma^E_1(e_1,\ldots,e_n)\bigr)=\Sigma_j(e_1,\ldots,\hat{e_j},\ldots,e_n)\in E_{\nset\setminus\{j\}}
              \end{equation}
              for $j=2,\ldots,n$.
		However, $\Sigma^E_1$ is not necessarily linear over $\Sigma_j$ 
		as $\theta_1$ is not a morphism of $n$-fold vector bundles. We will 
		inductively construct a	morphism which is linear over all sides. 
		
		First we do this locally: we choose a neighbourhood
                $U$ of $m\in M$ that trivialises each of the $E_{\{i\}}$,
                for $i=1,\ldots,n$. Fix smooth local frames
                $(b_i^1,\ldots,b_i^{l_i})$ of $E_{\{i\}}$ for $l_i=\rk
                E_{\{i\}}$. Every element of $\prod_{i\in\nset}^M E_{\{i\}}$ over
                $m\in U$ can thus be written uniquely as
			\[(e_1,\ldots,e_n)=\Bigl(\sum_{j=1}^{l_1}\beta_1^{j} b^{j}_1(m),\ldots,\sum_{j=1}^{l_n}\beta_n^{j} b^{j}_n(m)\Bigr)\]
		where $\beta_i^{j}\in \R$. Assume now that we have a morphism 
		$\Sigma^E_{k,U}\colon\overline{\E}|_U\to \E|_U$ which is linear over the splittings 
		$\Sigma_j$ for $j=1,\ldots k$ and satisfies additionally 
		\eqref{correctprojections} for all other $j$. We then define $\Sigma^E_{k+1,U}$ by
			\[\Sigma^E_{k+1,U}(e_1,\ldots,e_n):= \sum_{j=1,\ldots,l_{k+1}}^{E\to E_{\nset\setminus\{k+1\}}}\beta^{j}_{k+1}\dvtimes{}{\nset\setminus{\{k+1\}}}\Sigma^E_{k,U}\bigl(e_1,\ldots,e_k,b_{k+1}^{j}(m),e_{k+2},\ldots,e_n\bigr)\,.\]
		That this morphism is still a vector bundle morphism over $\Sigma_j$ for all
		$j=1,\ldots,k$ follows from the interchange laws in the double vector bundles 
		$(E,E_{\nset\setminus\{j\}},E_{\nset\setminus\{k+1\}},E_{\nset\setminus\{j,k+1\}})$.
		That it is also a vector bundle morphism over $\Sigma_{k+1}$ is immediate. 
		It furthermore still satisfies \eqref{correctprojections} for all other $j$.
		Starting with the restriction to $U$ of $\Sigma^E_1$ from \eqref{sigma1} 
		we get after $(n-1)$ iterations the top map of a local linear splitting 
		$\Sigma^E_U$ of $\E|_U$. 

                Now we will prove the existence of a global splitting
                using a partition of unity. This method was already
                given for double vector bundles in the original
                reference by Pradines \cite{Pradines77}.  Choose a locally finite cover of
                neighbourhoods as above,
                $\mathcal{U}=\{U_\alpha\}_{\alpha\in A}$, and a
                partition of unity $\{\varphi_\alpha\}_{\alpha\in A}$
                subordinate to $\mathcal{U}$.  Take then the local
                linear splittings $\Sigma^E_{U_\alpha}$ and define the
                global splitting for $(e_1,\ldots,e_n)$ over $m\in M$
                by
		\[\Sigma^E(e_1,\ldots,e_n):=\sum^{E\to E_{\nset\setminus\{1\}}}_{\{\alpha\colon m\in U_\alpha\}}\varphi_\alpha(m)\dvtimes{}{\nset\setminus\{1\}}\Sigma^E_{U_\alpha}(e_1,\ldots,e_n)\,.\]
		That this is a vector bundle morphism over all
                $\Sigma_j$ follows from simple computations using
                again the interchange laws in the double vector
                bundles
                $(E,E_{\nset\setminus\{1\}},E_{\nset\setminus\{j\}},E_{\nset\setminus\{1,j\}})$.
                Injectivity follows directly from this as all
                $\Sigma_k$ are injective.
                The linear splitting is then given by $\Sigma(\nset):=\Sigma^E$ and
		$\Sigma(I):=\Sigma_{\nset\setminus\{k\}}(I)$ whenever 
		$I\subseteq\nset\setminus \{k\}$. This completes the proof.
	\end{proof}

	\begin{cor}
	Every $n$-fold vector bundle $\E$ is non-canonically isomorphic to the
	associated decomposed $n$-fold vector bundle defined in Section \ref{dec_n_fold}.	
	\end{cor}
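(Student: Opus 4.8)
The plan is to recognise that this corollary is, up to unwinding definitions, exactly the combination of the two main results already established. Indeed, a \emph{decomposition} of $\E$ is by definition a natural isomorphism $\mathcal S\colon \E^\A\to\E$ of $n$-fold vector bundles, i.e.\ precisely an isomorphism between $\E$ and its associated decomposed $n$-fold vector bundle. So producing the claimed non-canonical isomorphism is the same as producing a decomposition, and the natural route is to feed the linear splitting of Theorem \ref{thm_dec_n_vb} into the equivalence of Theorem \ref{thm_split_dec_n}(b). I would argue by induction on $n$, the case $n=1$ being trivial since there $\E^\A=\E$. For $n\geq 2$, Theorem \ref{thm_dec_n_vb} first supplies a linear splitting $\Sigma\colon\overline\E\to\E$; by Theorem \ref{corenvb} each highest-order core $\E^\nset_J$ with $\#J=2$ is an $(n-1)$-fold vector bundle, so the induction hypothesis decomposes each of them.

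The hard part will be arranging these core decompositions $\S^J$ to be \emph{compatible} in the precise sense demanded by Theorem \ref{thm_split_dec_n}(b): they must restrict to $\Sigma$ on the vacant parts and must agree with one another on every pairwise intersection $(E^\A)^\nset_I\cap(E^\A)^\nset_J$. One cannot simply pick a decomposition of each core independently, because distinct highest-order cores share lower (iterated) cores on which the chosen decompositions are forced to coincide. I would resolve this exactly in the spirit of the proof of Theorem \ref{thm_dec_n_vb}: fix once and for all the compatible family of face splittings $\Sigma_I$ produced there, and then build the decompositions of the cores by a secondary induction on cardinality, at each stage \emph{extending} the already-fixed decomposition of the relevant shared sub-core rather than choosing freely. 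The key bookkeeping tool here is Remark \ref{CoresofFaces}, which identifies every such intersection as an (iterated) core $E^\nset_{I\cap J}$ (or the corresponding iterated core when $I\cap J=\emptyset$); this guarantees that each coherence condition is a condition on strictly lower-order cores, which have already been fixed by the time the larger cores are treated.

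Once this compatible system $(\Sigma,\{\S^J\}_{\#J=2})$ is in place, Theorem \ref{thm_split_dec_n}(b) does the remaining work: it yields a unique decomposition $\mathcal S\colon\E^\A\to\E$ restricting to $\Sigma$ on the vacant part and to the $\S^J$ on the highest-order cores. This $\mathcal S$ is the sought non-canonical isomorphism, and its surjectivity is automatic from linearity and a dimension count, as already observed in the proof of Theorem \ref{thm_split_dec_n}. The only genuinely delicate ingredient, and the one I would spell out in detail, is the coherent choice of core decompositions; everything else is a direct appeal to the two theorems.
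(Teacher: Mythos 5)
Your proposal is correct and takes essentially the same approach as the paper: it combines Theorem \ref{thm_dec_n_vb} with Theorem \ref{thm_split_dec_n}, and it identifies the genuine difficulty---the coherent choice of decompositions of the highest-order cores---and resolves it exactly as the paper does, by fixing one splitting per distinct iterated core and building decompositions bottom-up (doubles first, then triples, and so on) so that agreement on shared sub-cores, tracked via Remark \ref{CoresofFaces}, holds by construction. The only cosmetic difference is that you wrap this in an outer induction on $n$, whereas the paper runs the iteration directly over the order of the iterated cores; the mechanism is the same.
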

	\begin{proof}
          This follows from Theorem \ref{thm_dec_n_vb} and Theorem
          \ref{thm_split_dec_n}. To apply Theorem \ref{thm_split_dec_n} we
          have to show that we can construct compatible decompositions of
          all the highest order cores. This follows from a similar argument
          to the beginning of the proof of Theorem \ref{thm_dec_n_vb}. 
          
          We have to consider all iterated highest order cores. These are 
          firstly the $(n-1)$-fold vector bundles $\E^\nset_I$ with
          $I\subseteq\nset$ and $\# I=2$, secondly the $(n-2)$-fold 
          vector bundles $(\E^\nset_I)^{\rho^\nset_I}_{\nu}$ with $\nu\subseteq\rho^\nset_I$ 
          and $\# \nu=2$ and so forth. Theorem \ref{thm_dec_n_vb} lets us choose 
          linear splittings of all these multiple vector bundles. Note that the 
          same multiple vector bundles can occur multiple times (see for example 
          Remark \ref{CoresofFaces} (c)). For these we still fix only one linear 
          splitting. With Theorem 
          \ref{thm_split_dec_n} we obtain then firstly unique decompositions of 
          all occurring double vector bundles. After fixing these, with Theorem 
          \ref{thm_split_dec_n} we obtain decompositions of all occurring triple 
          vector bundles and these are all compatible by construction. 
          Fixing these we obtain compatible decompositions of all occurring $4$-fold
          vector bundles and so forth. Eventually after obtaining compatible 
          decompositions of the highest order cores Theorem \ref{thm_split_dec_n}
          gives us a decompositions of $\E$. 
	\end{proof}
	\begin{cor}\label{splitCoreSES}
	For every $n$-fold vector bundle $\E$ and the associated $n$-pullback $\P$
	there is an injective morphism of $n$-fold vector bundles 
	$\Sigma^P\colon \P\to\E$ simultaneously splitting all the ultracore 
	sequences from Proposition \ref{nCoreSES}.
	\end{cor}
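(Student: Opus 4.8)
The plan is to reduce everything to the decomposed case. By the preceding corollary there is a decomposition $\S\colon\E^{\A}\to\E$, i.e.\ a natural isomorphism of $n$-fold vector bundles. First I would record that the $n$-pullback is functorial: a morphism $\tau\colon\E\to\F$ of $n$-fold vector bundles induces a morphism $\tau_{\P}\colon\P_{\E}\to\P_{\F}$ of their $n$-pullbacks, given on total spaces by $(e_i)_{i\in\nset}\mapsto\bigl(\tau(\nset\setminus\{i\})(e_i)\bigr)_{i}$ and by $\tau(S)$ on each proper face $E_S$; the relation defining $P$ is preserved precisely because $\tau$ commutes with the projections $p^{\nset\setminus\{i\}}_j$. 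Moreover $\pi$ is natural for this construction, i.e.\ $\tau_{\P}\circ\pi_{\E}=\pi_{\F}\circ\tau$. In particular, since $\S$ is an isomorphism, the induced $\S_{\P}\colon\P^{\A}\to\P$ is an isomorphism of $n$-fold vector bundles, with $\S_{\P}(S)=\S(S)$ on every proper face and $\pi(\nset)\circ\S(\nset)=\S_{\P}(\nset)\circ\pi^{\A}(\nset)$ on the top.

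Next I would treat the decomposed model $\E^{\A}$ directly. Its $n$-pullback has total space $P^{\A}\cong\prod^M_{J\subsetneq\nset}A_J$, and the projection $\pi^{\A}(\nset)\colon E^{\A}=\prod^M_{J\subseteq\nset}A_J\to P^{\A}$ simply forgets the top component $A_{\nset}=(E^{\A})^{\nset}_{\nset}$. Hence there is a tautological section $\sigma^{\A}\colon\P^{\A}\to\E^{\A}$ of $\pi^{\A}$, given on total spaces by the inclusion that sets the $A_{\nset}$-component to zero and by the identity on every proper face; it is manifestly a monomorphism of $n$-fold vector bundles and satisfies $\pi^{\A}(\nset)\circ\sigma^{\A}(\nset)=\id_{P^{\A}}$.

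I would then define
\[
\Sigma^P:=\S\circ\sigma^{\A}\circ\S_{\P}^{-1}\colon\P\to\E,
\]
a composite of morphisms of $n$-fold vector bundles, hence itself such a morphism. On each proper face $S\subsetneq\nset$ it equals $\S(S)\circ\id\circ\S(S)^{-1}=\id_{E_S}$, and on the top it is a section of $\pi(\nset)$: using naturality together with $\pi^{\A}(\nset)\circ\sigma^{\A}(\nset)=\id$,
\[
\pi(\nset)\circ\Sigma^P(\nset)=\S_{\P}(\nset)\circ\pi^{\A}(\nset)\circ\sigma^{\A}(\nset)\circ\S_{\P}(\nset)^{-1}=\id_P.
\]
Because $\Sigma^P$ is the identity on every $E_{\nset\nok}$ and $\Sigma^P(\nset)\colon P\to E$ is a vector bundle morphism over each such identity, it is a splitting of the ultracore sequence of Proposition \ref{nCoreSES} over $E_{\nset\nok}$ for every $k$; thus it splits all of them simultaneously. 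Injectivity is immediate, since a section of a surjection is injective on the top and $\Sigma^P$ restricts to the identity on the proper faces.

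The only genuinely non-formal step is the first one: verifying that the $n$-pullback is a functor and that $\pi$ is a natural transformation for it, so that the decomposition isomorphism descends to an isomorphism of pullbacks intertwining the two projections $\pi$ and $\pi^{\A}$. This amounts to a direct check that the defining compatibility relations of $P$ and the commuting squares are preserved under $\tau$, and I expect no serious obstacle; everything after it is a formal diagram chase.
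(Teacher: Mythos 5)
Your proposal is correct and is essentially the paper's own proof: the paper also chooses a decomposition $\S^E$ of $\E$, notes that the face decompositions induce a canonical decomposition $\S^P$ of $\P$ (your $\S_{\P}$, obtained by your functoriality observation), and defines $\Sigma^P(\nset):=\S^E\circ\iota\circ(\S^P)\inv$, where $\iota$ is precisely your tautological section $\sigma^{\A}$ setting the $E^{\nset}_{\nset}$-component to zero. Your write-up merely makes explicit the functoriality of the $n$-pullback and the naturality of $\pi$, which the paper leaves implicit.
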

	\begin{proof}
          We can choose a decomposition of $\E$ with top map
          $\S^E\colon \E^\A(\nset)\to E$. This is a morphism over
          decompositions of the faces $E_{\nset\nok}$ for all
          $k\in\nset$.  These decompositions induce a canonical
          associated decomposition of $\P$, the top map of which we
          denote by
          $\S^P\colon
          \prod^M_{I\subsetneq\nset}E^I_I\to
          P$.  Together with the canonical inclusion
          $\iota \colon
          \prod^M_{I\subsetneq\nset}E^I_I\to
          \E^\A(\nset)$ we then define such a splitting with top map
          given by $\Sigma^P(\nset):=\S^E\circ\iota\circ(\S^P)\inv$.
	\end{proof}

        \subsection{$n$-fold vector bundle atlases}\label{sec_atlas}
In this section we show how a change of splittings corresponds to 
statomorphisms of the decomposed multiple vector bundle, which were 
introduced in \cite{GrMa12}.
We then explain how $n$-fold vector bundles can
alternatively be defined using smoothly compatible $n$-fold vector
bundle charts.

For $I$ a finite subset of $\N$, we denote by
$\mathcal P(I)=\{\{I_1,\ldots,I_k\}\mid I=I_1\sqcup\ldots\sqcup I_k\}$
the set of disjoint partitions of $I$.  Since the elements of
$\mathcal{P}(I)$ are sets, not tuples, we do not take the order into
account. That is, we do not distinguish  the partition
$\{I_1,I_2\}$ from $\{I_2,I_1\}$.

\begin{mydef}
	Let $\E$ be an $n$-fold vector bundle. A \textbf{statomorphism} of
	$\E$ is an isomorphism $\tau\colon \E\to \E$ that induces
	the identity on all building bundles $E^I_I$ for $I\subseteq \nset$.
	The set of statomorphisms of $\E$ forms a group with composition. 
\end{mydef}

\begin{prop}\label{dec_torsor_stato}
  Let $\E$ be an $n$-fold vector bundle and $\E^\A$ the corresponding
  decomposed $n$-fold vector bundle as in Definition
  \ref{def_n-dec}. The set of global decompositions of $\E$ is a
  torsor over the group of statomorphisms of $\E^\A$.
\end{prop}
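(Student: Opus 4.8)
The plan is to exhibit an explicit free and transitive right action of the group $G$ of statomorphisms of $\E^\A$ on the set $\mathcal D$ of decompositions of $\E$, and to recall that $\mathcal D\neq\emptyset$ by the Corollary following Theorem \ref{thm_dec_n_vb}. First I would define the action by precomposition: for a decomposition $\S\colon\E^\A\to\E$ and a statomorphism $\tau\colon\E^\A\to\E^\A$, set $\S\cdot\tau:=\S\circ\tau$. This is again an isomorphism of $n$-fold vector bundles, and on each side node $\{i\}$ both $\S(\{i\})$ and $\tau(\{i\})$ are the identity -- the latter because $E_{\{i\}}=(E^\A)^{\{i\}}_{\{i\}}$ is a building bundle, which $\tau$ fixes -- so $\S\cdot\tau$ is the identity on the sides. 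To see that $\S\cdot\tau$ induces the identity on every building bundle, I would invoke the functoriality of $(\cdot)^I_I$ from Proposition \ref{ncoremorph}: one has $(\S\circ\tau)^I_I=\S^I_I\circ\tau^I_I$, and evaluating the top maps at $\{I\}$ gives $\id_{E^I_I}\circ\id_{(E^\A)^I_I}=\id$, using that $\S$ is a decomposition and $\tau$ a statomorphism. Hence $\S\cdot\tau\in\mathcal D$, and associativity together with the evident action of the identity statomorphism make this a right $G$-action.

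The freeness and transitivity are then formal. For freeness, if $\S\circ\tau=\S$ then, since $\S$ is invertible as a natural transformation, $\tau=\id_{\E^\A}$. For transitivity, given two decompositions $\S_1,\S_2\in\mathcal D$ I would set $\tau:=\S_2^{-1}\circ\S_1\colon\E^\A\to\E^\A$, which is an isomorphism of $n$-fold vector bundles satisfying $\S_2\circ\tau=\S_1$. It then remains only to check that $\tau$ is a statomorphism, i.e. that it induces the identity on all building bundles. Here I would again use Proposition \ref{ncoremorph}: since $(\cdot)^I_I$ is a functor and $\S_2$ is an isomorphism, $(\S_2^{-1})^I_I=\bigl((\S_2)^I_I\bigr)^{-1}$, whence $\tau^I_I=\bigl((\S_2)^I_I\bigr)^{-1}\circ(\S_1)^I_I$; evaluating the top maps at $\{I\}$ gives $(\id_{E^I_I})^{-1}\circ\id_{E^I_I}=\id$, because both $\S_1$ and $\S_2$ are decompositions. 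Thus $\tau\in G$ and the action is transitive, completing the torsor property once nonemptiness of $\mathcal D$ is noted.

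The only genuine content, and hence the step I expect to require the most care, is verifying that the composites $\S\circ\tau$ and $\S_2^{-1}\circ\S_1$ again satisfy the defining condition on \emph{all} the iterated building bundles, not merely on the top cores. This is exactly what the functoriality of the core operation $(\cdot)^S_J$ in Proposition \ref{ncoremorph} is designed to deliver, together with its compatibility with inverses of isomorphisms (itself an immediate consequence of that same functoriality applied to $\S_2\circ\S_2^{-1}=\id$). Everything else -- the group axioms, freeness, and the nonemptiness $\mathcal D\neq\emptyset$ -- is immediate from the fact that $\S$, $\S_1$, $\S_2$ are natural isomorphisms and from the existence of decompositions established earlier.
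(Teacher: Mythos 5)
Your proposal is correct and follows essentially the same route as the paper: a right action by precomposition, transitivity via $\S_2^{-1}\circ\S_1$, and freeness by invertibility of $\S$. The only difference is that you spell out, via the functoriality of $(\cdot)^I_I$ from Proposition \ref{ncoremorph}, the verification that the relevant composites are again decompositions resp.\ statomorphisms, which the paper's proof asserts without detail.
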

\begin{proof}
	Given a decomposition $\S\colon \E^\A\to \E$ and a statomorphism
	$\tau\colon \E^\A\to \E^\A$ the composition 
	$\S\circ\tau\colon \E^\A\to \E$ is again a decomposition of $\E$.
	This defines a right action of the group of statomorphisms of $\E^\A$
	onto the set of decompositions of $\E$. Given two decompositions 
	$\S_1,\S_2\colon \E^\A\to \E$ the composition 
	$\tau:=\S_1\inv\circ\S_2\colon \E^\A\to \E^\A$ defines a
	statomorphism of $\E^\A$ such that $\S_1\circ\tau=\S_2$. This shows 
	that the action is transitive. That it is free is immediate as 
	$\S\circ\tau=\S$ clearly implies $\tau=\id$. 
\end{proof}

The following description of statomorphisms can be found in slightly 
different notation in \cite{GrMa12}. 
\begin{prop}
	A statomorphism $\tau$ of $\E^\A$ is necessarily of the following form:
	\begin{equation}\label{statodec}
		\tau(\nset)\colon (e_I)_{I\subseteq\nset}\mapsto \left(
		\sum_{\rho=\{I_1,\ldots,I_k\}\in\mathcal{P}(I)}
		\varphi_{\rho}(e_{I_1},\ldots,e_{I_k})
		\right)_{I\subseteq\nset}\,,
	\end{equation}
	where $\varphi_\rho\in\Gamma(\Hom(E_{I_1}^{I_1}\otimes\ldots
	\otimes E_{I_k}^{I_k},E^I_I))$ and for the trivial partition $\rho=\{I\}$ 
	we additionally demand $\varphi_{\{I\}}=\id_{E^I_I}$. 
\end{prop}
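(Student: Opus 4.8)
The plan is to read off the form of $\tau$ from its top component $\tau(\nset)\colon E_\nset\to E_\nset$ by exploiting the multigrading that the $n$ scalar multiplications impose on $E_\nset=\prod^M_{J\subseteq\nset}A_J$. First I would record a reduction coming from naturality. Writing $\tau_K\colon E_\nset\to A_K$ for the $A_K$-component of $\tau(\nset)$, the commuting squares in the definition of a morphism give $p^\nset_i\circ\tau(\nset)=\tau(\nset\setminus\{i\})\circ p^\nset_i$ for every $i$; iterating these shows that $\tau_K$ factors through the projection $E_\nset\to E_K$ and therefore depends only on the components $(e_J)_{J\subseteq K}$. Hence it suffices to express each $\tau_K$ in the variables $(e_J)_{J\subseteq K}$, and it is enough to treat $K=\nset$, the other cases being the analogous top components of the faces $\E^{K,\emptyset}$.

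Next I would use that $\tau(\nset)$ is a vector bundle morphism in each of the $n$ directions and covers the identity on $M$, so it commutes with every scalar multiplication $\cdot_i$. Giving $e_J$ the weight $\mathbf 1_J\in\{0,1\}^n$ (its indicator vector, which is exactly how $e_J$ transforms under the joint action of the $\cdot_i$), this equivariance forces $\tau_K$ to be homogeneous of weight $\mathbf 1_K$. Using smoothness together with homogeneity for the commuting $\cdot_i$, $\tau_K$ is a multi-homogeneous polynomial map in the fibre variables; since every entry of the target weight $\mathbf 1_K$ is at most $1$, only square-free monomials in distinct factors $e_{K_1},\dots,e_{K_r}$ can occur, and matching weights imposes $\sum_{l}\mathbf 1_{K_l}=\mathbf 1_K$. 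The latter says precisely that $\{K_1,\dots,K_r\}$ is a partition $\rho\in\mathcal P(K)$; in particular no $e_J$ with $J\not\subseteq K$ contributes, consistently with the reduction above. The coefficient of each such monomial is then a smooth, fibrewise-multilinear bundle map, i.e.\ a section $\varphi_\rho\in\Gamma\bigl(\Hom(E^{K_1}_{K_1}\otimes\cdots\otimes E^{K_r}_{K_r},E^K_K)\bigr)$, where I identify $A_{K_l}=E^{K_l}_{K_l}$ and $A_K=E^K_K$ via Example \ref{decJcore}. This yields the asserted expansion $\tau_K=\sum_{\rho\in\mathcal P(K)}\varphi_\rho$.

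It then remains to normalise the trivial partition. Restricting $\tau_K$ to the building bundle $E^K_K\hookrightarrow E_K$, on which $e_J=0$ for all $\emptyset\neq J\subsetneq K$, every term with $r\geq 2$ vanishes because it contains a factor $e_{K_l}$ with $K_l\subsetneq K$, so that $\tau_K|_{E^K_K}=\varphi_{\{K\}}$. Since a statomorphism induces the identity on every building bundle, $\varphi_{\{K\}}=\id_{E^K_K}$, which is the remaining condition in \eqref{statodec}. Conversely one checks directly that any $\tau$ of this shape is a morphism of $n$-fold vector bundles inducing the identity on the building bundles (and is invertible because its trivial-partition part is the identity), but only the stated necessity is needed here.

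The step that needs the most care is the passage from the scalar-multiplication equivariance of $\tau(\nset)$ to the honest multilinear-polynomial form of $\tau_K$: homogeneity by itself only constrains which monomials may appear, and to conclude that $\tau_K$ is a genuine sum of multilinear bundle maps one must invoke the fibrewise linearity of $\tau(\nset)$ in each direction — equivalently, the decomposition of a smooth map intertwining homogeneity structures into its homogeneous components, in the spirit of \cite{GrRo09}. Everything else is bookkeeping with the $\{0,1\}^n$-grading and the identifications $A_J=E^J_J$.
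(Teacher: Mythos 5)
Your argument is correct, but there is nothing in the paper to compare it against: the proposition is stated without proof, accompanied only by the remark that this description of statomorphisms "can be found in slightly different notation in \cite{GrMa12}". So your proof does not parallel the paper's route; it replaces a citation by a self-contained argument, and the argument is sound. Your skeleton -- (i) naturality forces the $A_K$-component of $\tau(\nset)$ to factor through $E_\nset\to E_K$, so one may work with the top component of each face; (ii) equivariance under the $n$ commuting scalar multiplications makes this component multi-homogeneous of weight $\mathbf{1}_K\in\{0,1\}^n$; (iii) this forces an expansion into multilinear terms indexed by the partitions of $K$; (iv) restriction to the building bundle $E^K_K$, where $e_J=0$ for all $\emptyset\neq J\subsetneq K$, isolates the trivial-partition coefficient, which the statomorphism condition then sets equal to $\id_{E^K_K}$ -- is exactly right, and is in the same spirit as the scalar-multiplication viewpoint of \cite{GrRo09}. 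The step you flag in (iii) is indeed the only one with analytic content, and it closes the way you indicate: since $\tau(\nset)$ is a vector bundle morphism over $\tau(\nset\setminus\{i\})$ for each $i$, it is degree-one homogeneous in the group of variables $\{e_J\mid i\in J\}$ for every $i\in K$, and smoothness plus degree-one homogeneity already yields \emph{joint linearity} in that group (differentiate the homogeneity relation at $\lambda=0$; fibrewise additivity need not be assumed separately). Iterating these joint-linearity decompositions and comparing coefficients of powers of $\lambda$ under each $\cdot_i$ kills every cross term whose weight differs from $\mathbf{1}_K$, leaving precisely the partition-indexed multilinear terms. If you write this up, two details deserve a sentence each: first, the coefficients $\varphi_\rho$ vary smoothly in $m$ (they are polarizations of the smooth map $\tau(\nset)$ along the zero section), so they are genuine sections of $\Hom(E^{I_1}_{I_1}\otimes\cdots\otimes E^{I_k}_{I_k},E^I_I)$; second, the reduction to faces uses that each $\tau(K)$ is itself a statomorphism of $(\E^\A)^{K,\emptyset}$, which follows because its core morphisms agree with those of $\tau$ by Proposition \ref{ncoremorph} and Remark \ref{CoresofFaces}.
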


Now we define $n$-fold vector bundle charts and atlases and show that
our definition of $n$-fold vector bundles is equivalent to the definition
in terms of charts. 

\begin{mydef}\label{n_fold_atlas}
  Let $M$ be a smooth manifold and $E$ a topological space
  together with a continuous map $\Pi\colon E\to M$.  An
  \textbf{n-fold vector bundle chart} is a tuple
  \[c=(U,\Theta,(V_I)_{I\subseteq \nset}),\]
  where $U$ is an open set in $M$, for each $I\subseteq\nset$ the
  space $V_I$ is a (finite dimensional) real vector space and
  $\Theta\colon \Pi\inv(U)\to U \times \prod_{I\subseteq \nset}V_I$ is
  a homeomorphism such that $\Pi=\pr_1\circ\Theta$.

  Two $n$-fold vector bundle charts
  $c=(U,\Theta, (V_I)_{I\subseteq\nset})$ and
  $c'=(U',\Theta', (V_I')_{I\subseteq\nset})$ are \textbf{smoothly
    compatible} if $V_I=V_I'$ for all $I\subseteq\nset$ and the 
		``change of chart'' $\Theta'\circ\Theta\inv$
  over $U\cap U'$ has the following form:
\begin{equation}\label{change_of_charts}
  \bigl(p,(v_I)_{I\subseteq\nset}\bigr)\mapsto\left(p,\left(\sum_{\rho=\{I_1,\ldots,I_k\}\in\mathcal P(I)}\omega_{\rho}(p)(v_{I_1},\ldots,v_{I_k})\right)_{I\subseteq\nset}\right)
\end{equation}
with $p\in U\cap U'$, $v_I\in V_I$ and
$\omega_\rho\in C^\infty(U\cap U',\Hom(V_{I_1}\otimes\ldots\otimes
V_{I_k},V_{I}))$ for $\rho=\{I_1,\ldots,I_k\}\in\mathcal P(I)$.

A \textbf{smooth n-fold vector bundle atlas} $\lie A$ on $E$
is a set of n-fold vector bundle charts of $E$ that are
pairwise smoothly compatible and such that the set of underlying open
sets in $M$ covers $M$.  As usual, $E$ is then a
smooth manifold and two smooth $n$-fold vector bundle atlases $\lie A_1$
and $\lie A_2$ are \textbf{equivalent} if their union is a smooth
n-fold vector bundle atlas.  A smooth \textbf{$n$-fold vector bundle} structure
on $E$ is an equivalence class of smooth n-fold vector bundle
atlases on $E$.
\end{mydef}

Let $\mathbb E$ be an $n$-fold vector bundle. By Theorem
\ref{thm_dec_n_vb} and Theorem \ref{thm_split_dec_n} we have a
decomposition $\mathcal S\colon \E^{\mathcal A}\to \E$ of
$\E$, with $\mathcal A$ the family $(A_I)_{I\subseteq \nset}$ of
vector bundles over $M$ defined by $A_{\{i\}}=\E(\{i\})$ for
$i\in\nset$ and $A_I=E^I_I$ for $I\subseteq \nset$, $\# I\geq 2$. 
Set $\Pi=\E(\nset\to\emptyset)\colon E\to M$. For
each $I\subseteq \nset$, set $V_I:=\mathbb R^{\dim A_I}$, the vector
space on which $A_I$ is modelled. Take a covering
$\{U_\alpha\}_{\alpha\in\Lambda}$ of $M$ by open sets trivialising all
the vector bundles $A_I$;
\[ \phi^\alpha_I\colon q_{I}^{-1}(U_\alpha)\overset{\sim}{\longrightarrow} U_\alpha\times V_I
\]
for all  $I\subseteq \nset$ and all $\alpha\in\Lambda$.
Then we define $n$-fold vector bundle charts $\Theta_\alpha\colon\Pi^{-1}(U_\alpha)\to U_\alpha\times \prod_{I\subseteq \nset}V_I$ by 
\[\Theta_\alpha=\left(\Pi\times \left(\phi^\alpha_I\right)_{I\subseteq \nset}\right)\circ \mathcal S^{-1}\an{\Pi^{-1}(U_\alpha)}.
\]
Given $\alpha,\beta\in\Lambda$ with $U_\alpha\cap U_\beta\neq\emptyset$, the change of chart
\[\Theta_\alpha\circ\Theta_\beta\inv\colon (U_\alpha\cap U_\beta)\times \prod_{I\subseteq \nset}V_I\to  (U_\alpha\cap U_\beta)\times \prod_{I\subseteq \nset}V_I
\]
is given by 
\begin{equation}\label{simple_change_of_charts}
(p, (v_I)_{I\subseteq \nset})\mapsto (p, (\rho^{\alpha\beta}_I(p)v_I)_{I\subseteq \nset}),
\end{equation}
with
$\rho_I^{\alpha\beta}\in C^\infty(U_\alpha\cap U_\beta,
\operatorname{Gl}(V_I))$
the cocycle defined by $\phi^\alpha_I\circ (\phi_I^\beta)\inv$. The
two charts are hence smoothly compatible and we get an $n$-fold vector
bundle atlas
$\lie A=\{(U_\alpha,\Theta_\alpha, (V_I)_{I\subseteq\nset})\mid \alpha\in
\Lambda\}$ on $E$.

\medskip

Conversely, given a space $E$ with an $n$-fold vector bundle structure
over a smooth manifold $M$ as in Definition \ref{n_fold_atlas}, we
define $\mathbb E\colon \square^{\N}\to \Man$ as follows.  Take a
maximal atlas
$\lie A=\{(U_\alpha,\Theta_\alpha, (V_I)_{I\subseteq\nset})\mid \alpha\in
\Lambda\}$
of $E$; in particular $\{U_\alpha\}_{\alpha\in\Lambda}$ is an open covering
of $M$.  For $\alpha,\beta,\gamma\in\Lambda$ we obtain from the
identity
$\Theta_\gamma\circ\Theta_\alpha\inv=\Theta_\gamma\circ
\Theta_\beta\inv\circ\Theta_\beta\circ\Theta_\alpha\inv$
on $\Pi^{-1}(U_\alpha\cap U_\beta\cap U_\gamma)$ the following cocycle
conditions. For $I\subseteq \nset$ and
$\rho=\{I_1,\ldots,I_k\}\in\mathcal P(I)$:
\begin{equation}\label{n-cocycles}
  \begin{split}
    \omega^{\gamma\alpha}_\rho&(p)(v_{I_1},\ldots,v_{I_k})=\\
    &\sum_{\{1,\ldots,k\}=J_1\sqcup\ldots\sqcup J_l}
			\omega^{\gamma\beta}_{\{I_{J_1},\ldots,I_{J_l}\}}(p)
			\Bigl(\omega^{\beta\alpha}_{\{I_j\mid j\in J_1\}}(p)
			\bigl((v_{I_j})_{j\in J_1}\bigr),\ldots,
			\omega^{\beta\alpha}_{\{I_j\mid j\in J_l\}}(p)
			\bigl((v_{I_j})_{j\in J_l}\bigr)\Bigr)\,,
\end{split}
\end{equation}
where $I_{J_m}:=\bigcup_{j\in J_m}I_j$.

We set $\mathbb E(\nset)=E$, $\mathbb E(\emptyset)=M$, 
and more generally for $I\subseteq \nset$, 
\[\mathbb
E(I)=\left.\left(\bigsqcup_{\alpha\in\Lambda}\left(U_\alpha\times\prod_{J\subseteq
  I}V_J\right)\right)\right/\sim \]
with $\sim$ the equivalence relation defined on
$\bigsqcup_{\alpha\in\Lambda}(U_\alpha\times\prod_{J\subseteq I}V_J)$
by 
\[ U_\alpha\times\prod_{J\subseteq I}V_J\quad \ni\quad
\left(p,(v_J)_{J\subseteq I}\right)\quad \sim \quad
\left(q,(w_J)_{J\subseteq I}\right)\quad \in \quad U_\beta\times\prod_{J\subseteq I}V_J
\]
if and only if $p=q$ and
\[ (v_J)_{J\subseteq I}=\left(\sum_{\rho=\{J_1,\ldots,J_k\}\in\mathcal
    P(J)}\omega_{\rho}(p)(w_{J_1},\ldots,w_{J_k})\right)_{J\subseteq I}.
\]
The relations \eqref{n-cocycles} show the symmetry and transitivity of
this relation. As in the construction of a vector bundle from vector
bundle cocycles, one can show that $\mathbb E(I)$ has a unique smooth
manifold structure such that
$\Pi_I\colon \mathbb E(I)\to M$, $\Pi_I[p,(v_I)_{I\subseteq J}]=p$
is a surjective submersion and such that
 the maps 
\[\Theta^I_\alpha\colon \pi_I\left(U_\alpha\times\prod_{J\subseteq
  I}V_J\right)\to U_\alpha\times\prod_{J\subseteq I}V_J, \qquad
[p,(v_I)_{I\subseteq J}]\mapsto (p,(v_I)_{I\subseteq J})
\]
are diffeomorphisms, where $\pi_I\colon \bigsqcup_{\alpha\in\Lambda}(U_\alpha\times\prod_{J\subseteq
  I}V_J)\to \mathbb E(I)$ is the projection to the equivalence classes.

We have then also $\# I$ surjective submersions
\[ p^I_{I\setminus \{i\}}\colon \E(I)\to \E(I\setminus\{i\})\]
for $i\in I$, defined in charts by 
\[ U_\alpha\times \prod_{J\subseteq I}V_J \,\ni\, (p, (v_J)_{J\subseteq
  I})\,\mapsto \, (p, (v_J)_{i\not\in J\subseteq I})\,\in \, U_\alpha\times \prod_{J\subseteq I\setminus\{i\}} V_J
\]
and it is easy to see that $\E(I)$ is a vector bundle over
$\E(I\setminus\{i\})$, and that for $i,j\in I$, 
\[
\begin{tikzcd}
\E(I)\ar[rr,"p^{I}_{I\setminus\{i\}}"] \ar[d,"p^{I}_{I\setminus\{j\}}"] && \E(I\setminus \{i\})\ar[d,"p^{I\setminus \{i\}}_{I\setminus\{i,j\}} "]\\
\E(I\setminus \{j\}) \ar[rr,"p^{I\setminus \{j\}}_{I\setminus\{i,j\}}  "] & &\E(I\setminus \{i,j\})
\end{tikzcd} 
\]
is a double vector bundle,  with obvious local trivialisations given
by the local charts.

\medskip

The constructions above are inverse to each other and we get the
following corollary of our local splitting theorem.
\begin{cor}
Definition \ref{def_n_fold} of an $n$-fold vector bundle as a functor
from the $n$-cube category is equivalent
to Definition \ref{n_fold_atlas} of an $n$-fold vector bundle as a
space with a maximal $n$-fold vector bundle atlas.
\end{cor}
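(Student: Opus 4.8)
The plan is to verify that the two constructions carried out just above are mutually inverse, once we read each definition up to its natural equivalence: functors are identified up to natural isomorphism of $n$-fold vector bundles, and atlases up to the equivalence of Definition \ref{n_fold_atlas}. Accordingly there are two directions to check.

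First I would treat the passage functor $\to$ atlas $\to$ functor. Starting from $\E$, fix a decomposition $\S\colon \E^\A\to\E$ (which exists by Theorems \ref{thm_dec_n_vb} and \ref{thm_split_dec_n}) together with trivialisations $\phi^\alpha_I$ of the building bundles $A_I=E^I_I$; this yields the charts $\Theta_\alpha=(\Pi\times(\phi^\alpha_I))\circ\S^{-1}$, whose transitions are the \emph{diagonal} cocycles \eqref{simple_change_of_charts}. When one now reconstructs a functor $\E'$ from this atlas, the gluing of the pieces $U_\alpha\times\prod_{J\subseteq I}V_J$ involves only the trivial-partition terms $\omega_{\{J\}}=\rho^{\alpha\beta}_J$ of \eqref{change_of_charts}, so the gluing decouples over the indices $J\subseteq I$. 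Hence $\E'(I)$ is canonically the fibered product $\prod^M_{J\subseteq I}A_J=\E^\A(I)$; that is, $\E'=\E^\A$, and the chosen decomposition $\S$ is precisely the required natural isomorphism $\E'\to\E$.

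Next I would treat the passage atlas $\to$ functor $\to$ atlas. Starting from a maximal atlas $\lie A$, the reconstruction equips each $\E(I)$ with canonical charts $\Theta^I_\alpha$, and for $I=\nset$ the $\Theta^\nset_\alpha$ are $n$-fold vector bundle charts whose change of chart is exactly the original \eqref{change_of_charts} with the prescribed $\omega^{\alpha\beta}_\rho$; the cocycle identities \eqref{n-cocycles} are what guarantee both that the defining equivalence relation is well defined and that these charts are pairwise smoothly compatible. Under the canonical identification $\E(\nset)\cong E$ these $\Theta^\nset_\alpha$ are the charts of $\lie A$ itself. It then remains to see that the recipe functor $\to$ atlas, applied to this $\E$, produces an atlas equivalent to $\{\Theta^\nset_\alpha\}$ and hence to $\lie A$: on each patch $U_\alpha$ the chart $\Theta^\nset_\alpha$ itself arises from a \emph{local} decomposition $\S_\alpha$ and the tautological trivialisations, so comparing $\Theta_\alpha=(\Pi\times(\phi^\alpha_I))\circ\S^{-1}$ with $\Theta^\nset_\alpha$ amounts to comparing the global $\S$ with $\S_\alpha$ on $U_\alpha$.

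The main obstacle is exactly this choice-dependence of the functor $\to$ atlas passage: different decompositions $\S$ and different trivialisations yield a priori different atlases, and one must show they all lie in a single equivalence class so that the passage descends to equivalence classes. This is where I would invoke Proposition \ref{dec_torsor_stato}. Two decompositions of $\E^\A$ differ by a statomorphism $\tau$ of $\E^\A$, and the explicit form \eqref{statodec} of $\tau$ coincides term by term with the admissible change of chart \eqref{change_of_charts}. Thus on each $U_\alpha$ the transition $\Theta_\alpha\circ(\Theta^\nset_\alpha)^{-1}$ is, up to the diagonal linear changes coming from the $\phi^\alpha_I$, precisely such a statomorphism, hence of the admissible form; the two atlases are therefore smoothly compatible. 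The single observation that statomorphisms and changes of chart are governed by the identical partition-indexed formula is what closes both directions and makes the two constructions inverse up to the stated equivalences.
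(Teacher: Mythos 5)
Your proposal is correct and follows essentially the same route as the paper: the paper's proof consists precisely of the two constructions preceding the corollary, the assertion that they are mutually inverse, and the remark afterwards invoking Proposition \ref{dec_torsor_stato} together with \eqref{statodec} to show that different choices of (local or global) decomposition and trivialisation yield compatible charts. You have simply made explicit the round-trip verifications and the choice-independence argument that the paper states tersely.
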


\medskip

Our construction above of an $n$-fold vector bundle atlas on
$\E(\nset)$ from an $n$-fold vector bundle yields an atlas with
simpler changes of charts \eqref{simple_change_of_charts} than the
most general allowed change of charts \eqref{change_of_charts}. This
is due to our choice of a \emph{global} decomposition of the $n$-fold
vector bundle. Choosing different local or global decompositions will
yield an atlas with changes of charts as in \eqref{change_of_charts}.
That the equivalence class of atlases is independent of the choice of
decomposition follows from Proposition \ref{dec_torsor_stato} and
\eqref{statodec}. Two different decompositions will give compatible
charts.

\section{Decompositions of $\infty$-fold vector bundles}\label{sec_infty}

In this section we show how our proof of the existence of linear
decompositions of $n$-fold vector bundles for all $n\in \N$ yields
as well the existence of linear decompositions of $\infty$-fold
vector bundles. We write here $\infVB$ for the category of
$\infty$-fold vector bundles and $\infty$-fold vector bundle
morphisms.

Let $\E$ be an $\infty$-fold vector bundle. Then for each $n\in\N$,
the restriction $\E\circ \iota^\N_n$ defines an $n$-fold vector
bundle, and $\E^n:=\E\circ \iota^\N_n\circ\pi^\N_n$ defines again an
$\infty$-fold vector bundle, given by $\E^n(I)=\E(I\cap\nset)$ for all
finite $I\subseteq \N$. There is a sequence of monomorphisms of
$\infty$-fold vector bundles
\begin{equation}\label{functor_N_to_infVB}
  \E^0\overset{\iota_0^1}{\longrightarrow}\E^1\overset{\iota_1^2}{\longrightarrow}\E^2\overset{\iota_2^3}{\longrightarrow}\ldots
  \end{equation}
  defined by
  $\iota_k^l(I)=0^{I\cap \underline{l}}_{I\cap\underline{k}}$ for $k\leq l\in \N$ and a finite subset $I$ of $\N$;
  remember that $0^I_I=\id_{E_I}$.
  Thus we have a functor $\E^\cdot\colon\N\to \infVB$
  sending an object $n\in \N$ to $\E^n$ and an arrow $m\leq n$ to $\iota_m^n$.
  In the same manner, for each
  $n\in\N$ there is a monomorphism $\iota_n\colon \E^n\rightarrow \E$
  defined by
  $\iota_n(I)=0^{I}_{I\cap\underline{n}}\colon \E^n(I)\to \E(I)$ for
  all finite $I\subseteq \N$.
It is easy to see that
  $\E$ together with the inclusions $\iota_n\colon \E^n\rightarrow \E$
  defines a colimit for \eqref{functor_N_to_infVB} in the category of
  $\infty$-fold vector bundles. 

The inductive nature of the proof
  of Theorem \ref{thm_dec_n_vb} yields the following corollary.
\begin{cor}
  Let $\E$ be an $\infty$-fold vector bundle. Let 
	$\A=(q_{I}\colon A_I\to M)_{I\subseteq \N, \#I<\infty}$ be the family
  of vector bundles over $M$ defined by $A_I=E^I_I$ for $2\leq \#I<\infty$,
  $A_{\{k\}}=E_{\{k\}}$ and $A_\emptyset=\E(\emptyset)=M$. Then
  there exists a sequence of decompositions 
	$\tilde{\S}^n\colon \E^\A\circ \iota^\N_n\rightarrow \E\circ\iota^\N_n$ 
	such that the diagram of $\infty$-fold vector bundles
\begin{equation}\label{infty_com_diag}
	\begin{tikzcd}
	\E^0\ar[r]& \E^1\ar[r] &\E^2\ar[r]&\ldots\\
	(\E^\A)^0\ar[r] \ar[u,"\S^0"] & (\E^\A)^1\ar[r] \ar[u,"\S^1"]&(\E^\A)^2\ar[r] \ar[u,"\S^2"] &\ldots\,,
	\end{tikzcd}
\end{equation}
commutes, where $\S^n(I):=\tilde{\S}^n(I\cap\nset)$ is the morphism
of $\infty$-fold vector bundles induced by $\tilde{\S}^n$.
\end{cor}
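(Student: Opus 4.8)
The plan is to produce the decompositions $\tilde{\S}^n$ by induction on $n$, arranging that each $\tilde{\S}^{n+1}$ restricts, on the face indexed by $\nset\subseteq\underline{n+1}$, to the previously built $\tilde{\S}^n$; commutativity of \eqref{infty_com_diag} will then be a formal consequence. The basic observation is that all the data entering the proofs of Theorems \ref{thm_dec_n_vb} and \ref{thm_split_dec_n} are \emph{intrinsic} to $\E$ and insensitive to truncation. Indeed, for $J\subseteq S\subseteq\nset$ the core $E^S_J$ depends only on the face $\E^{S,\emptyset}$, which is literally the same functor whether regarded inside $\E\circ\iota^\N_n$, inside $\E\circ\iota^\N_{n+1}$, or inside $\E$. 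In particular the building bundles $A_I=E^I_I$ with $I\subseteq\nset$ agree in all truncations, so $\E^\A\circ\iota^\N_n$ really is the decomposed $n$-fold vector bundle associated to $\E\circ\iota^\N_n$, and the $\nset$-face of $\E\circ\iota^\N_{n+1}$ is exactly $\E\circ\iota^\N_n$.

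For the inductive step (the base case $n=0$ being the identity on $M$) I would feed $\tilde{\S}^n$ into the construction of $\tilde{\S}^{n+1}$. The splitting $\Sigma^{n+1}$ of $\E\circ\iota^\N_{n+1}$ is assembled in Theorem \ref{thm_dec_n_vb} from compatible linear splittings of its codimension-one faces $\E^{\underline{n+1}\setminus\{k\},\emptyset}$, $k\in\underline{n+1}$; for $k=n+1$ this face is $\E\circ\iota^\N_n$ and I take its splitting to be the one underlying $\tilde{\S}^n$, namely $\Sigma^n=\tilde{\S}^n\circ\iota$ from \eqref{emb_spl_dec}. The splittings for $k\le n$ are then chosen by the cardinality induction of part (a) of that proof, relative to the already-fixed splittings on all sub-faces $I\subseteq\nset$, which is possible because two codimension-one face splittings only need to agree on common sub-faces, all of which are prescribed by $\Sigma^n$. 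Likewise, the compatible decompositions of the highest-order cores $\E^{\underline{n+1}}_{J}$, $\#J=2$, required by Theorem \ref{thm_split_dec_n} are built from one fixed system of core splittings of $\E$ (one per distinct multiple vector bundle $\E^S_J$, as in the Corollary following Theorem \ref{thm_dec_n_vb}), reusing for the cores living over $\nset$ the very choices already made for $\tilde{\S}^n$. Theorem \ref{thm_split_dec_n}(b) then glues $\Sigma^{n+1}$ and these core decompositions into a decomposition $\tilde{\S}^{n+1}$ of $\E\circ\iota^\N_{n+1}$.

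The crucial point, and the place where care is needed, is that this $\tilde{\S}^{n+1}$ restricts to $\tilde{\S}^n$ on the $\nset$-face, i.e.\ $\tilde{\S}^{n+1}(J)=\tilde{\S}^n(J)$ for every $J\subseteq\nset$. By Theorem \ref{thm_split_dec_n}(a) a decomposition induces a decomposition of each face, and that induced face decomposition is pinned down, via the uniqueness in Theorem \ref{thm_split_dec_n}(b), by the splitting of the face together with the decompositions of its iterated highest-order cores. Since these are precisely the cores $\E^S_J$ with $S\subseteq\nset$, for which $\tilde{\S}^{n+1}$ and $\tilde{\S}^n$ use identical choices, the face decomposition induced by $\tilde{\S}^{n+1}$ must coincide with $\tilde{\S}^n$. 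This is the main obstacle: everything rests on the constructions of Theorems \ref{thm_dec_n_vb} and \ref{thm_split_dec_n} being \emph{relative}, able to incorporate prescribed splittings and core decompositions on one codimension-one face; once this is granted, the extension property is automatic and the rest is the bookkeeping already carried out in the finite case.

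Finally I would read off commutativity of \eqref{infty_com_diag}. Fix a finite $I\subseteq\N$ and set $J=I\cap\nset$ and $J'=I\cap\underline{n+1}$; if $n+1\notin I$ then $J'=J$ and the component is trivially commutative, so assume $J'=J\cup\{n+1\}$. The horizontal arrows of the square are the zero sections $0^{\E^\A,J'}_{J}$ and $0^{\E,J'}_{J}$ (recall $\iota_k^l(I)=0^{I\cap\underline{l}}_{I\cap\underline{k}}$), while the vertical arrows are $\S^n(I)=\tilde{\S}^n(J)$ and $\S^{n+1}(I)=\tilde{\S}^{n+1}(J')$. As the $J'$-component of a morphism of multiple vector bundles, $\tilde{\S}^{n+1}(J')$ is a vector bundle morphism over $\tilde{\S}^{n+1}(J)$ for the projection $p^{J'}_{n+1}$, hence intertwines the zero sections:
\[
\tilde{\S}^{n+1}(J')\circ 0^{\E^\A,\,J'}_{J}
= 0^{\E,\,J'}_{J}\circ\tilde{\S}^{n+1}(J)
= 0^{\E,\,J'}_{J}\circ\tilde{\S}^{n}(J),
\]
the last equality using the restriction property established above. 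This is exactly the equality of the two composites around the square, so \eqref{infty_com_diag} commutes.
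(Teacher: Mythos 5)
Your proposal is correct and takes essentially the same route as the paper, which offers no detailed argument beyond the remark that ``the inductive nature of the proof of Theorem \ref{thm_dec_n_vb} yields the corollary'': you make exactly that idea precise, using the relative form of claims (a) and (b) in the proof of Theorem \ref{thm_dec_n_vb} together with the uniqueness in Theorem \ref{thm_split_dec_n}(b) to extend $\tilde{\S}^n$ to $\tilde{\S}^{n+1}$, and then checking commutativity via the zero-section intertwining property of multiple vector bundle morphisms. This is a faithful and correct expansion of the paper's intended proof.
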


Since \eqref{infty_com_diag} commutes, and for each $n$,
$\mathcal S^n$ is an isomorphism, we find that $\E^\A$
together with the morphisms $\tau(n)=\iota^\A_n\circ (\S^n)\inv$ for
all $n$, is also a colimit  for \eqref{functor_N_to_infVB} in the category of
  $\infty$-fold vector bundles. 
Therefore there is a unique isomorphism $\mathcal S\colon \E^\A\rightarrow \E$ such
that $\iota_n\circ \S^n=\mathcal S\circ\iota^\A_n$ for all $n\in \N$.
We get the following theorem.

\begin{thm}
Let $\E$ be an $\infty$-fold vector bundle. Let $\A=(q_{I}\colon A_I\to M)_{I\subseteq \N, \#I<\infty}$ be the family
  of vector bundles over $M$ defined by $A_I=E^I_I$ for $2\leq \#I<\infty$,
  $A_{\{k\}}=E_{\{k\}}$ and $A_\emptyset=\E(\emptyset)=M$. 

Then $\E$ is non-canonically isomorphic 
to the associated decomposed $\infty$-fold vector bundle $\E^\A$.
More precisely, given a tower of decompositions as in \eqref{infty_com_diag}, the decomposition
  $\mathcal S\colon \E^\A\rightarrow \E$ of $\E$ can be uniquely chosen so that
  for each $n\in \N$, $\mathcal S^n\colon (\E^\A)^n\rightarrow \E^n$ satisfies
 \begin{equation}\label{res_inf_mor}
\mathcal S^n(I)=\S(I\cap \nset)\colon (\E^\A)^n(I)=\E^\A(I\cap\nset)\to
  \E^n(I)=\E(I\cap\nset)\end{equation} for all finite $I\subseteq \N$.
\end{thm}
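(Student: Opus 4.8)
The plan is to obtain this theorem as a clean colimit statement, reusing the tower of finite decompositions already produced and making the colimit bookkeeping precise. First I would invoke the previous corollary to fix a sequence of decompositions $\tilde{\S}^n\colon \E^\A\circ\iota^\N_n\to\E\circ\iota^\N_n$ whose induced $\infty$-fold morphisms $\S^n$, with $\S^n(I)=\tilde{\S}^n(I\cap\nset)$, make the ladder \eqref{infty_com_diag} commute. The text has already established that $\E$, equipped with the inclusions $\iota_n\colon\E^n\to\E$, is a colimit of the sequence \eqref{functor_N_to_infVB}; by the identical argument $\E^\A$, with its inclusions $\iota^\A_n\colon(\E^\A)^n\to\E^\A$, is a colimit of the bottom row $(\E^\A)^0\to(\E^\A)^1\to\cdots$ of \eqref{infty_com_diag}. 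So I would have two diagrams over $\N$ connected by the vertical family $\{\S^n\}$, each $\S^n$ an isomorphism since it is a decomposition.

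Next I would run the universal property. Using the commutativity of \eqref{infty_com_diag} and the fact that the $\iota_n$ form a cocone, the morphisms $\iota_n\circ\S^n\colon(\E^\A)^n\to\E$ assemble into a cocone under the bottom row, so the colimit $\E^\A$ yields a unique $\infty$-fold morphism $\mathcal S\colon\E^\A\to\E$ with $\mathcal S\circ\iota^\A_n=\iota_n\circ\S^n$ for all $n$. Symmetrically, since each $\S^n$ is invertible, the maps $\tau(n)=\iota^\A_n\circ(\S^n)\inv$ form a cocone under \eqref{functor_N_to_infVB}, exhibiting $\E^\A$ as a second colimit of the top row; comparing the two colimit descriptions of that row gives a two-sided inverse to $\mathcal S$, so $\mathcal S$ is an isomorphism and is uniquely determined by the cocone identities.

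To extract \eqref{res_inf_mor}, I would evaluate the identity $\mathcal S\circ\iota^\A_n=\iota_n\circ\S^n$ at an object $K\subseteq\nset$. For such $K$ one has $K\cap\nset=K$, so the zero-section components $\iota^\A_n(K)=0^{K}_{K}$ and $\iota_n(K)=0^{K}_{K}$ are both identities, and the identity collapses to $\mathcal S(K)=\S^n(K)=\tilde{\S}^n(K)$. Applying this with $K=I\cap\nset$ for an arbitrary finite $I\subseteq\N$ and any $n\ge\max I$ gives $\mathcal S^n(I)=\mathcal S(I\cap\nset)$, which is exactly \eqref{res_inf_mor}, and incidentally shows the value is independent of the admissible $n$. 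The same equality $\mathcal S(K)=\tilde{\S}^n(K)$ also shows $\mathcal S$ is a genuine \emph{decomposition} and not merely an isomorphism: each $\tilde{\S}^n$ restricts to the identity on the $E_{\{i\}}$ and induces the identity on all building bundles $A_K=E^K_K$ with $\#K\le n$; since every finite building bundle is seen at the stage $n=\max K$, $\mathcal S$ inherits these normalizations on all faces.

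The step I expect to be the main obstacle is the second one, namely justifying that $\E^\A$ is also a colimit of the top row and that the resulting comparison map is canonical. This is where the hypotheses that each $\S^n$ is an isomorphism and that \eqref{infty_com_diag} commutes must be used in full, and where one must confirm that the relevant sequential colimits genuinely exist in $\infVB$ and are computed objectwise over $\square^\N$. Only then is the unique comparison map automatically a natural isomorphism of functors $\square^\N\to\Man$, rather than just a compatible collection of maps on finite faces; everything else is the routine evaluation of the cocone identities carried out above.
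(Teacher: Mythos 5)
Your proposal is correct and takes essentially the same route as the paper: both rest on the commuting tower \eqref{infty_com_diag} of finite decompositions and on $\E$ and $\E^\A$ being colimits of the relevant sequential diagrams in $\infVB$, with the unique comparison isomorphism $\mathcal S$ characterised by $\mathcal S\circ\iota^\A_n=\iota_n\circ\S^n$ and \eqref{res_inf_mor} read off by evaluating this identity at objects $K\subseteq\nset$ (the paper's proof simply sets $\mathcal S(I):=\mathcal S^n(I)$ for any $n$ with $I\subseteq\nset$). The only cosmetic difference is that you first invoke the universal property of $\E^\A$ as colimit of the bottom row and then produce the inverse via the top-row comparison, whereas the paper directly compares the two cocones under the top row using $\tau(n)=\iota^\A_n\circ(\S^n)\inv$.
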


\begin{proof}
The morphism $\mathcal S\colon\E^\A\rightarrow\E$ is explicitly
defined as follows.
Choose a finite subset $I\subseteq \N$. Then there is $n\in\N$ with
$I\subseteq \nset$ and we can set $\mathcal S(I)=\mathcal S^n(I)$. 
The equalities \eqref{res_inf_mor} are now easy to check.
  \end{proof}

\section{Example: triple vector bundles}\label{sec_triple}

In this section, we explain for the convenience of the reader how our
results and considerations in Sections \ref{sec_multiple_def} and
\ref{sec_ex_splitting} read in the case $n=3$. Then we consider \emph{doubly
linear sections} of triple vector bundles, and we explain how they can
be understood -- using linear decompositions -- as horizontal lifts of
pairs of linear sections of the sides double vector bundles.

\subsection{Splittings of triple vector bundles}

Given a triple vector bundle $\E$ we will write in the following 
$T:=\E(\{1,2,3\})$, $D:=\E(\{1,2\})$, $E:=\E(\{2,3\})$, $F:=\E(\{1,3\})$,
$A:=E_{\{1\}}$, $B:=E_{\{2\}}$ and $C:=E_{\{3\}}$.
The triple vector bundle is then a cube of vector bundle structures
\begin{equation}\label{Tcube}
\begin{tikzcd}
  T \ar[rr,"p^T_D"]\ar[rd,"p^T_F"]\ar[dd,"p^T_E"'] & & D \ar[dd,"p^D_B"', near start]\ar[rd,"p^D_A"] & \\
  & F \ar[rr,"p^F_A"', near start, crossing over] & & A \ar[dd,"q_A"] \\
  E \ar[rr,"p^E_B"', near end]\ar[rd,"p^E_C"'] & & B\ar[rd,"q_B"] & \\
  & C \ar[uu, crossing over, leftarrow, "p^F_C", near
  end]\ar[rr,"q_C"] & & M
\end{tikzcd} 
\,,
\end{equation}
where all faces are double vector bundles. 

We will denote the
cores of the double vector bundles $(T;D,E;B)$, $(T;E,F;C)$,
$(T;F,D;A)$ by $L_{DE}$, $L_{EF}$ and $L_{FD}$ and the cores of the
double vector bundles $(D;A,B;M)$, $(E;B,C;M)$, $(F;C,A;M)$ by
$K_{AB}$, $K_{BC}$ and $K_{CA}$, respectively. In the general notation
we would write $E^{\{1,2,3\}}_{\{2,3\}}=:L_{FD}$, $E^{\{1,2,3\}}_{\{1,3\}}=:L_{DE}$ 
and $E^{\{1,2,3\}}_{\{1,2\}}=:L_{EF}$ for the upper cores and 
$E^{\{1,2\}}_{\{1,2\}}=:K_{AB}$, $E^{\{2,3\}}_{\{2,3\}}=:K_{BC}$ 
and $E^{\{1,3\}}_{\{1,3\}}=:K_{CA}$ for the lower cores. 
The \textbf{triple core} of this triple vector bundle is 
$S:=E^{\{1,2,3\}}_{\{1,2,3\}}$, a vector bundle over $M$.

The upper cores $L_{DE}$, $L_{EF}$ and $L_{FD}$ are themselves double 
vector bundles by Theorem \ref{corenvb}. All three have by Lemma \ref{core_of_core}
the core $S$, whereas the sides of 
$L_{DE}$ are given by $K_{CA}$ and $B$, the sides of $L_{EF}$ 
by $K_{AB}$ and $C$, and the sides of $L_{FD}$ by $K_{BC}$ and $A$.

 A decomposition of a triple vector bundle $(T;D,E,F;A,B,C;M)$ as
  above is now an isomorphism of triple vector bundles $\mathcal{S}$ from
  the associated decomposed triple vector bundle as in Example \ref{decTVB}
  to $T$ over decompositions 
  of $D,E$ and $F$ as double vector bundles and inducing the identity
  on $S$. In particular it is over the identities on $A,B$ and $C$,
  and is inducing the identities on $K_{AB},K_{BC}$ and $K_{CA}$.

  A linear splitting of a triple vector bundle $(T;D,E,F;A,B,C;M)$ as
  above is an injective morphism of triple vector bundles $\Sigma$
  from the vacant triple vector bundle
  $(A\times_M B\times_M C;A\times_M B,B\times_M C,C\times_M
  A;A,B,C;M)$
  over linear splittings of the double vector bundles $D,E$ and $F$,
  hence over the identities on $A,B$ and $C$.

  We have proved the following lemma, which is the case $n=3$ of
	Theorem \ref{thm_split_dec_n}. 

\begin{lemma}\label{SplitvsDec}
  A decomposition of a triple vector bundle $T$ is equivalent to a
  linear splitting of $T$ and linear splittings of the three core
  double vector bundles $L_{DE},L_{EF}$ and $L_{FD}$.
\end{lemma}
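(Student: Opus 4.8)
The plan is to read off the lemma as the special case $n=3$ of Theorem~\ref{thm_split_dec_n}, the only real work being to check that in this dimension the compatibility hypotheses appearing there are automatically satisfied, so that they need not be mentioned in the statement.

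First I would match up the data. For $S=\{1,2,3\}$ the highest order cores $\E^{S}_J$ with $\#J=2$ are precisely the three upper cores $L_{DE}=\E^{S}_{\{1,3\}}$, $L_{EF}=\E^{S}_{\{1,2\}}$ and $L_{FD}=\E^{S}_{\{2,3\}}$, which are double vector bundles by Theorem~\ref{corenvb}. As $n-1=2$, a decomposition of each of these is, by the equivalence of linear splittings and decompositions for ordinary double vector bundles recalled at the beginning of Section~\ref{splitdecnvb}, the same datum as a linear splitting. Thus the input required by the converse direction of Theorem~\ref{thm_split_dec_n} --- a linear splitting of $T$ together with compatible decompositions of $L_{DE}$, $L_{EF}$ and $L_{FD}$ --- is exactly a linear splitting of $T$ together with linear splittings of the three core double vector bundles.

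The key point is that the two compatibility conditions of Theorem~\ref{thm_split_dec_n} are vacuous here. For the first, when $\#I=2$ the set $\{1,2,3\}\setminus I$ is a singleton and $\E^{S}_I$ has the corresponding side ($A$, $B$ or $C$) at that node; a decomposition of the core double vector bundle restricts to the identity on this side, which is also the value of the splitting $\Sigma$ there, so the two agree. For the second, using the decomposed cores computed in Example~\ref{decJcore} the overlap $(E^\A)^{S}_I\cap(E^\A)^{S}_J$ of two distinct two-element cores collapses to their single common factor $A_{\{1,2,3\}}=S$, the triple core; since every decomposition of a double vector bundle restricts to the identity on its core, each core decomposition is the identity on $S$, whence any two of them agree there. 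With both compatibility conditions free, the forward direction of Theorem~\ref{thm_split_dec_n} turns a decomposition of $T$ into a linear splitting of $T$ and splittings of $L_{DE}$, $L_{EF}$ and $L_{FD}$, while its converse turns any such collection into a unique decomposition of $T$ inducing it; the uniqueness there makes the two passages mutually inverse, which is the asserted equivalence. The only step needing genuine care is the reduction of the overlap sets to the triple core $S$; everything else is a direct transcription of Theorem~\ref{thm_split_dec_n} into the notation of the cube \eqref{Tcube}.
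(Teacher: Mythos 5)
Your proposal is correct and takes essentially the same route as the paper, whose entire proof is the remark that the lemma is the case $n=3$ of Theorem~\ref{thm_split_dec_n}. The point you verify explicitly --- that both compatibility conditions of that theorem are vacuous for $n=3$, since the shared lower nodes are just the sides $A$, $B$, $C$ and the pairwise overlaps of the highest order cores collapse to the triple core $S$, on all of which decompositions and splittings are identities by definition --- is precisely the detail the paper leaves implicit, and your computation of those overlaps is correct.
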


Note that here, starting from the splittings we get an explicit formula for the decomposition:
$\mathcal{S}(a,b,c,k_{AB},k_{BC},k_{CA},s)$ equals
\begin{equation*}
\begin{split}
  &\Bigl(\bigl(\Sigma(a,b,c)+_D(0^T_{\Sigma^D(a,b)}+_F\Sigma^{L_{FD}}(a,k_{BC}))\bigr)+_F(0^T_{\Sigma^F(a,c)}+_E\Sigma^{L_{EF}}(c,k_{AB}))\Bigr)\\
  & \qquad +_E \Bigl(0^T_{\mathcal S^E(b,c,k_{BC})}+_D\Sigma^{L_{DE}}(b,k_{CA})+_D \bigl({0^{T}_{0_b^D}
	+_{F}\overline{s}}\bigr)\Bigr)\,.
\end{split}
\end{equation*}

\bigskip
Now let us consider the pullback triple vector bundle associated with a triple
  vector bundle. Given double vector bundles $(D,A,B,M)$, $(E,B,C,M)$ and $(F,C,A,M)$,
we consider the set 
\[P=\bigl\{(d,e,f)\in D\times E\times F \mid
  \,\,p^D_A(d)=p^F_A(f),\,\, p^D_B(d)=p^E_B(e),\,\,
  p^E_C(e)=p^F_C(f)\bigr\}\,.\] Then $P$ is a triple
vector bundle, with the obvious projections to $D$, $E$ and $F$ and
the additions defined as follows.  The space $E\times_C
F$ 
has a vector bundle structure
\begin{align*}E\times_C F&\to B\times_M A, \qquad (e,f)\mapsto(p^E_B(e),p^F_A(f))\,,
\end{align*}
with addition $(e_1,f_1)+(e_2,f_2)=(e_1+_Be_2,f_1+_Af_2)$.  Since $D$
is a double vector bundle and so non-canonically split, we have the
surjective submersion $\delta^D\colon D\rightarrow B\times_M A$, given
by $\delta^D(d):=(p^D_B(d),p^D_A(d))$.  We define the vector bundle
$P\to D$ as the pullback vector bundle structure
$(\delta^D)^!(E\times_C F)\rightarrow D$.  We call $P$ the
\textbf{pullback triple vector bundle defined by $D$, $E$ and $F$}
because it fills a cube in a similar manner as the pullback in
category theory fills a square.

We have three short exact sequences of vector bundles over $D$, $E$ and $F$, 
respectively; the one over $D$ reads
\[\begin{tikzcd}[column sep=scriptsize,row sep=small]\label{tvb_core_ses}
0\ar[r] & (\pi^D_M)^!S\ar[r] & T\ar[rrr,"{(\delta^D)^!(p^T_E,p^T_F)}"]& & & P\ar[r] &0\,,\\
\end{tikzcd}
\]
where $\pi^D_M=q_A\circ p^D_A=q_B \circ p^D_B$.
We are now able to state Theorem \ref{thm_dec_n_vb} in the case $n=3$.
\begin{thm}
  Every triple vector bundle is non-canonically isomorphic to a 
decomposed triple vector bundle. 
\end{thm}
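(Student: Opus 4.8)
The statement is exactly the case $n=3$ of the Corollary to Theorem \ref{thm_dec_n_vb}, and the plan is to make that argument explicit in the triple vector bundle notation fixed above: I will assemble a decomposition of $T$ out of one linear splitting of $T$ together with linear splittings of its three upper cores, and then invoke Lemma \ref{SplitvsDec}.

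First I would apply Theorem \ref{thm_dec_n_vb} to $T$ to obtain a linear splitting $\Sigma\colon \overline{\E}\to\E$, that is, an injective morphism of triple vector bundles from the vacant triple vector bundle $(A\times_M B\times_M C;\ A\times_M B,\,B\times_M C,\,C\times_M A;\ A,B,C;\,M)$ into $T$, which is the identity on $A$, $B$ and $C$. Restricting $\Sigma$ to the side faces yields in particular linear splittings $\Sigma^D$, $\Sigma^E$ and $\Sigma^F$ of the three side double vector bundles $D$, $E$ and $F$.

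Next I would split the cores. By Theorem \ref{corenvb} the three upper cores $L_{DE}$, $L_{EF}$ and $L_{FD}$ are double vector bundles, with respective sides $(K_{CA},B)$, $(K_{AB},C)$, $(K_{BC},A)$ and common core the triple core $S$. Applying the double vector bundle splitting result of Section \ref{2-case} (equivalently Theorem \ref{thm_dec_n_vb} with $n=2$) to each of them, I would fix linear splittings $\Sigma^{L_{DE}}$, $\Sigma^{L_{EF}}$ and $\Sigma^{L_{FD}}$. Feeding $\Sigma$ and these three core splittings into Lemma \ref{SplitvsDec} then produces an isomorphism $\mathcal S$ from the decomposed triple vector bundle of Example \ref{decTVB} onto $T$; concretely $\mathcal S$ is given by the explicit formula displayed immediately after Lemma \ref{SplitvsDec}.

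The only step needing genuine attention is the compatibility of the chosen splittings demanded by Theorem \ref{thm_split_dec_n}, but for $n=3$ it is \emph{automatic}, so I expect no real obstacle. Indeed, any two of the upper cores intersect only in the triple core $S$, on which every decomposition of a double vector bundle restricts to the identity, so the three core splittings agree there without any coordination; and their outer sides $A,B,C,K_{AB},K_{BC},K_{CA}$ are precisely the building bundles on which $\Sigma$ and each core splitting is canonically the identity, so the splittings agree with $\Sigma$ on the shared side data as well. Hence the hypotheses of Lemma \ref{SplitvsDec} are met with no further choices, and the resulting $\mathcal S$ exhibits $T$ as non-canonically isomorphic to a decomposed triple vector bundle.
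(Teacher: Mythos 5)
Your proof is correct and follows essentially the paper's own route: the theorem is stated there precisely as the case $n=3$ of Theorem \ref{thm_dec_n_vb} combined with Theorem \ref{thm_split_dec_n} (in the guise of Lemma \ref{SplitvsDec}), which is exactly the argument you carry out. Your explicit verification that the compatibility hypotheses of Theorem \ref{thm_split_dec_n} are vacuous for $n=3$ --- since the upper cores $L_{DE}$, $L_{EF}$, $L_{FD}$ pairwise intersect only in the triple core $S$, on which any decomposition restricts to the identity, and the remaining overlaps are building bundles where everything is the identity --- is precisely the fact the paper leaves implicit by stating Lemma \ref{SplitvsDec} without compatibility conditions.
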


\subsection{Splittings, decompositions and horizontal lifts.}
Let us mention first that a decomposition of a double vector bundle is
equivalent to a splitting of the short exact sequences given by its
linear sections.  As we have seen in Section \ref{2-case}, a splitting
$\Sigma\colon A\times_MB\to D$ of $D$ is equivalent to a homomorphism
of $C^\infty(M)$-modules $\sigma_B\colon \Gamma(B)\to\Gamma_A^\ell(D)$
(a \emph{horizontal lift}) which splits this short exact sequence.
The correspondence is given by $\sigma_B(b)(a_m)=\Sigma(a_m,b(m))$ for
all $b\in\Gamma(B)$ and $a_m\in A$.  By symmetry of $\Sigma$ a
horizontal lift $\sigma_B$ is therefore also equivalent to a
horizontal lift $\sigma_A\colon\Gamma(A)\rightarrow \Gamma^\ell_B(D)$,
splitting the sequence
\[0 \rightarrow
\Gamma(\Hom(B,C))\stackrel{\tilde{\cdot}}{\longrightarrow}
\Gamma^\ell_B(D)\stackrel{\pi}{\longrightarrow} \Gamma(A)\rightarrow
0\,.\]

\medskip

In this section, we explain how a splitting of the triple vector
bundle $T$ is equivalent to a ``horizontal lift'' of pairs of linear
sections in $\Gamma_A^\ell(F)\times_{\Gamma(C)}\Gamma_B^\ell(E)$ to
\emph{doubly linear sections} of $T\to D$. Of course, similar results
hold for doubly linear sections of $T\to E$ as lifts of elements of
$\Gamma_C^\ell(F)\times_{\Gamma(A)}\Gamma_B^\ell(D)$, etc.

\begin{mydef}
  A doubly linear section of $T$ over $D$ is a section which is a
  double vector bundle morphism from $(D;A,B;M)$ to $(T;F,E;C)$ over
  some morphisms $\xi\colon A\rightarrow F$, $\eta\colon B\rightarrow E$,
  $c\colon M\rightarrow C$.  The morphisms $\xi$ and $\eta$ are then
  themselves linear sections of the double vector bundles $E$ and $F$
  over the same section of $C$. We denote the set of doubly linear 
  sections of $T$ over $D$ by $\Gamma_D^{\ell^2}(T)$.
\end{mydef}
The space $\Gamma_D^{\ell^2}(T)$ is naturally a $C^\infty(M)$-module: 
for $f\in C^\infty(M)$ and $\xi\in\Gamma^{\ell^2}_D(T)$ doubly linear 
over $\xi_A\in\Gamma_A^\ell(F)$ and $\xi_B\in\Gamma_B^\ell(E)$, the section
$(q_A\circ p^D_A)^*f\cdot \xi$ is doubly linear over $q_A^*f\cdot\xi_A$ 
and $q_B^*f\cdot\xi_B$.

Consider the double vector bundle $S$ with sides $M$ and core $S$:
\[
  \begin{tikzcd}
   S \ar[r,"q_S"] \ar[d,"q_S"'] & M \ar[d,"\id_M"]\\
M \ar[r,"\id_M"] & M
\end{tikzcd} 
\]
As we have seen in Lemma \ref{module_morphisms}, the space
$\operatorname{Mor}_2(D,S)$ of double vector bundle morphisms $D\to S$
is a $C^\infty(M)$-module. It is easy to see that given a
decomposition $A\times_MB\times_MK_{AB}\to D$, we get
$\operatorname{Mor}_2(D,S)\simeq \Gamma(K_{AB}^*\otimes
S)\oplus\Gamma(A^*\otimes B^*\otimes S)$. We have an obvious inclusion
\[ \widetilde{\cdot}\colon \operatorname{Mor}_2(D,S)\hookrightarrow \Gamma_D^{\ell^2}(T),
\]
the images of which are exactly the doubly linear sections that project to the zero sections of $E\to A$ and $F\to B$,
and so to the zero section of $C$.

Both $\Gamma^\ell_A(F)$ and $\Gamma^\ell_B(E)$ project onto
$\Gamma(C)$, thus we can build the pullback
$\Gamma^\ell_A(F)\times_{\Gamma(C)}\Gamma^\ell_B(E)$ which consists of
pairs of linear sections of the respective bundles which are linear
over the same section of $C$.  Now $\Gamma_D^{\ell^2}(T)$ fits into
a short exact sequence of $C^\infty(M)$-modules as in the following
proposition.
\begin{prop}\label{PropSESMod}
  Let $T$ be a triple bundle as in (\ref{Tcube}). We have a short exact sequence of 
	$C^\infty(M)$-modules
  \begin{equation}\label{ModSES}0\to \Mor_2(D,S)\overset{\widetilde\cdot}{\rightarrow}\Gamma_D^{\ell^2}(T)\stackrel{\pi}
{\rightarrow}\Gamma^\ell_A(F)\times_{\Gamma(C)}\Gamma^\ell_B(E)\rightarrow
    0\,.\end{equation}
\end{prop}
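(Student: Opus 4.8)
The plan is to verify the three defining properties of a short exact sequence of $C^\infty(M)$-modules, namely that $\widetilde{\cdot}$ and $\pi$ are module homomorphisms, that $\widetilde{\cdot}$ is injective with image exactly $\ker\pi$, and that $\pi$ is surjective. That the two maps respect the $C^\infty(M)$-module structures is immediate from the structures recalled above: scaling a doubly linear section over $(\xi_A,\xi_B)$ by $(q_A\circ p^D_A)^*f$ scales its side sections by $q_A^*f$ and $q_B^*f$, so $\pi$ is $C^\infty(M)$-linear, and $\widetilde{\cdot}$ is linear for the same reason, with additivity analogous. Injectivity of $\widetilde{\cdot}$ is clear, as it is induced by the core inclusion $(\pi^D_M)^!S\hookrightarrow T$, which is fibrewise injective. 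It therefore remains to prove exactness in the middle and surjectivity of $\pi$, and the real content lies in the latter.

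For exactness in the middle I would use the short exact sequence of vector bundles over $D$,
\[
0\longrightarrow (\pi^D_M)^!S\longrightarrow T\xrightarrow{\;(\delta^D)^!(p^T_E,p^T_F)\;} P\longrightarrow 0\,.
\]
A doubly linear section $\zeta$ lies in $\ker\pi$ precisely when its side morphisms $\xi\colon A\to F$ and $\eta\colon B\to E$ are the zero sections of $F\to A$ and $E\to B$; equivalently $p^T_F\circ\zeta$ and $p^T_E\circ\zeta$ are the zero sections along $p^D_A$ and $p^D_B$. Composing such a $\zeta$ with $(\delta^D)^!(p^T_E,p^T_F)$ then lands in the zero section of $P\to D$, so $\zeta$ takes values in the kernel $(\pi^D_M)^!S$. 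Being moreover a double vector bundle morphism into this core, $\zeta$ equals $\widetilde{\phi}$ for a unique $\phi\in\Mor_2(D,S)$. Conversely, every $\widetilde{\phi}$ projects to the zero sections by construction, so $\operatorname{im}\widetilde{\cdot}=\ker\pi$.

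Surjectivity of $\pi$ is where I expect the main obstacle, and it is resolved using Corollary \ref{splitCoreSES} (which ultimately rests on the decomposition theorem). Given a compatible pair $(\xi,\eta)\in\Gamma^\ell_A(F)\times_{\Gamma(C)}\Gamma^\ell_B(E)$, linear over a common $c\in\Gamma(C)$, I would first build the doubly linear section of the pullback triple vector bundle $P$ over $D$,
\[
s_{\xi,\eta}\colon D\to P,\qquad d\mapsto\bigl(d,\eta(p^D_B(d)),\xi(p^D_A(d))\bigr)\,.
\]
The three compatibility conditions defining $P$ hold because $\xi$ and $\eta$ are sections of $F\to A$ and $E\to B$ and are linear over the same $c$; their linearity makes $s_{\xi,\eta}$ a double vector bundle morphism $(D;A,B;M)\to(P;F,E;C)$ over $(\xi,\eta,c)$. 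Corollary \ref{splitCoreSES} supplies an injective morphism of triple vector bundles $\Sigma^P\colon P\to T$ that restricts to the identity on every proper face, hence covers $\id_D$, and that splits the sequence above, so $(\delta^D)^!(p^T_E,p^T_F)\circ\Sigma^P=\id_P$. Then $\Sigma^P\circ s_{\xi,\eta}\colon D\to T$ is again a double vector bundle morphism over $(\xi,\eta,c)$; it is a section of $T\to D$ since $\Sigma^P$ covers $\id_D$, and $\pi(\Sigma^P\circ s_{\xi,\eta})=(\xi,\eta)$. This establishes surjectivity and completes the proof.
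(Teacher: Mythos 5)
Your proof is correct, and its injectivity and kernel arguments coincide in substance with the paper's: the paper extracts the core element of a kernel section by subtracting zero sections, while you phrase the same computation through the ultracore sequence of vector bundles over $D$. Where you genuinely differ is the surjectivity of $\pi$. The paper chooses a linear splitting $\Sigma^{E,F}$ of the double vector bundle $(T;E,F;C)$ and proposes $\hat{\xi}(d):=\Sigma^{E,F}\bigl(\xi^E(p^D_B(d)),\xi^F(p^D_A(d))\bigr)$ as a preimage of $(\xi^F,\xi^E)$, whereas you form the tautological doubly linear section $s_{\xi,\eta}\colon d\mapsto\bigl(d,\eta(p^D_B(d)),\xi(p^D_A(d))\bigr)$ of the pullback $P\to D$ and push it into $T$ by the morphism $\Sigma^P$ of Corollary \ref{splitCoreSES}. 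Your route uses heavier machinery --- Corollary \ref{splitCoreSES} rests on the full decomposition theorem (Theorem \ref{thm_dec_n_vb}), while the paper invokes only the existence of a splitting of a single double vector bundle --- but it buys a property that the paper's formula actually lacks: since $\Sigma^P$ restricts to $\id_D$ on the face $D$, your lift $\Sigma^P\circ s_{\xi,\eta}$ satisfies $p^T_D\circ(\Sigma^P\circ s_{\xi,\eta})=\id_D$, i.e.\ it really is a section of $T\to D$. The paper's $\hat{\xi}$ depends on $d$ only through $\bigl(p^D_A(d),p^D_B(d)\bigr)$, so $p^T_D\circ\hat{\xi}$ is constant along the fibres of $(p^D_A,p^D_B)\colon D\to A\times_M B$ and cannot equal $\id_D$ unless the core $K_{AB}$ is trivial; in a decomposition, $p^T_D\bigl(\hat{\xi}(a,b,k_{AB})\bigr)$ has no $k_{AB}$-dependence. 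Thus $\hat{\xi}$ is a doubly linear morphism over $(\xi^F,\xi^E)$ but not an element of $\Gamma^{\ell^2}_D(T)$ as defined, and it needs a correction along the core of $D$ --- which is exactly what your $\Sigma^P$ provides (compare also the explicit lift in the proof of Proposition \ref{eq_lift_dec}, which keeps the core coordinate $k_m$). So your argument is not merely a valid alternative: it closes a genuine slip in the printed proof.
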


\begin{proof}
Injectivity of $\widetilde{\cdot}$ is immediate. To show surjectivity 
of $\pi$, choose a linear splitting $\Sigma^{E,F}$ of the double 
vector bundle $(T;E,F;C)$. Given 
$\xi=(\xi^F,\xi^E)\in \Gamma^\ell_A(F)\times_{\Gamma(C)}\Gamma^\ell_B(E)$
we can then define $\hat{\xi}\in\Gamma_D^{\ell^2}(T)$ by 
$\hat{\xi}(d):=\Sigma^{E,F}(\xi^E(p^D_B(d)),\xi^F(p^D_A(d))$. It is 
easy to see that this is in fact a doubly linear section. Note that 
the map $\hat{\cdot}$ does not define a splitting of the short exact
sequence, as it is not linear over $D$. 

Given any $\phi\in\Mor_2(D,S)$ and $d\in D$ over $a\in A$ and $b\in B$
it is clear that $p^T_E(\tilde{\phi}(d))=\nvbzero{E}{b}$ and
$p^T_F(\tilde{\phi}(d))=\nvbzero{F}{a}$. Thus $\tilde{\phi}$ is linear
over the zero sections of $E\to B$ and $F\to A$ and thus in the kernel
of $\pi$. Conversely, given $\xi \in \Gamma_D^{\ell^2}(T)$ over the
zero sections of $E\to B$ and $F\to A$, we get for any $d\in D$ over
$a\in A$ and $b\in B$ that
$\bigl(\xi(d)-_E\nvbzero{T}{d}\bigr)-_F\nvbzero{T}{{\nvbzero{D}{b}}}$
projects to zero in all directions and thus defines an element
$\phi(d)$ of the triple core $S$. It is easy to check that this
assignment defines a morphism $\phi\in\Mor_2(D,S)$. Then
$\xi=\tilde{\phi}$ and the sequence is exact.
\end{proof}

\begin{prop}\label{eq_lift_dec}
  A decomposition of a triple vector bundle $T$ as in (\ref{Tcube}) is
  equivalent to linear splittings of the double vector bundles $D$,
  $E$, $F$, $L_{DE}$ and $L_{FD}$ and a \textbf{horizontal lift}, that
  is a splitting
  $\sigma\colon
  \Gamma^\ell_A(F)\times_{\Gamma(C)}\Gamma^\ell_B(E)\to
  \Gamma_D^{\ell^2}(T) $ of the short exact sequence (\ref{ModSES})
  that is compatible with the splittings of the double vector bundles
  in the sense that for all $d\in D$  we have 
  $\sigma(\widetilde{\phi^F},0^E_B)(d)=
  \nvbzero{T}{d}+_E\Sigma^{L_{DE}}\bigl(p^D_B(d),\phi^F(p^D_A(d))\bigr)$
  for all $\phi^F\in\Gamma(\Hom(A,K_{CA}))$ and
  $\sigma(0^F_A,\widetilde{\phi^E})(d)=
  \nvbzero{T}{d}+_F\Sigma^{L_{FD}}\bigl(p^D_A(d),\phi^E(p^D_B(d))\bigr)$
  for all $\phi^E\in\Gamma(\Hom(B,K_{BC}))$.
\end{prop}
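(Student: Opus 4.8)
The plan is to reduce everything to Lemma \ref{SplitvsDec} and then to repackage the data it produces. By that lemma a decomposition of $T$ is the same thing as a linear splitting $\Sigma$ of $T$ together with linear splittings of the three upper core double vector bundles $L_{DE}$, $L_{EF}$ and $L_{FD}$. Since $\Sigma$ is a morphism of triple vector bundles over linear splittings $\Sigma^D,\Sigma^E,\Sigma^F$ of the faces, it in particular determines these face splittings. Fixing $\Sigma^D,\Sigma^E,\Sigma^F,\Sigma^{L_{DE}},\Sigma^{L_{FD}}$ once and for all, the proposition reduces to the claim that the remaining data — the splitting $\Sigma$ lying over the chosen face splittings together with $\Sigma^{L_{EF}}$ — is in natural bijection with the horizontal lifts $\sigma$ of (\ref{ModSES}) that are compatible with $\Sigma^{L_{DE}}$ and $\Sigma^{L_{FD}}$ in the stated sense. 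The crucial point I want to exploit is that one such $\sigma$ simultaneously encodes $\Sigma$ and $\Sigma^{L_{EF}}$.

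For the direction from splittings to a lift, I would first note that each $c\in\Gamma(C)$ produces linear sections $\hat c^F\in\Gamma^\ell_A(F)$ and $\hat c^E\in\Gamma^\ell_B(E)$ by $\hat c^F(a_m)=\Sigma^F(a_m,c(m))$ and $\hat c^E(b_m)=\Sigma^E(b_m,c(m))$, both linear over $c$, so that $(\hat c^F,\hat c^E)$ lies in the right-hand term of (\ref{ModSES}); together with the core-type sections $\widetilde{\phi^F}$, $\phi^F\in\Gamma(\Hom(A,K_{CA}))$, and $\widetilde{\phi^E}$, $\phi^E\in\Gamma(\Hom(B,K_{BC}))$, they additively generate $\Gamma^\ell_A(F)\times_{\Gamma(C)}\Gamma^\ell_B(E)$. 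I would then set $\sigma(\widetilde{\phi^F},0^E_B)$ and $\sigma(0^F_A,\widetilde{\phi^E})$ equal to the two prescribed compatibility expressions, and declare $\sigma(\hat c^F,\hat c^E)$ to be the doubly linear section of $T$ over $D$ whose value on the zero-core part $\Sigma^D(A\times_MB)$ of $D$ is $\Sigma(\,\cdot\,,\,\cdot\,,c)$ and whose core map $K_{AB}\to L_{EF}$ over $c$ is $\Sigma^{L_{EF}}(\,\cdot\,,c)$; extending $C^\infty(M)$-linearly yields $\sigma$. That $\sigma$ is linear over $D$ and satisfies $\pi\circ\sigma=\id$ follows from the interchange laws in the faces of (\ref{Tcube}), exactly as in Theorem \ref{thm_split_dec_n}.

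For the converse, from a lift $\sigma$ I would recover $\Sigma$ and $\Sigma^{L_{EF}}$ by examining the single doubly linear section $\sigma(\hat c^F,\hat c^E)$: its value at $\Sigma^D(a_m,b_m)$ defines $\Sigma(a_m,b_m,c_m)$ for any $c$ with $c(m)=c_m$, while its core map, a vector bundle morphism $K_{AB}\to L_{EF}$ over $c$, defines $\Sigma^{L_{EF}}(k_m,c_m)$. Both are independent of the chosen section $c$ precisely because $\sigma$ is $C^\infty(M)$-linear, in the same way that the well-definedness of $\Sigma_h$ was argued in Section \ref{2-case}. Keeping the fixed splittings and feeding $\Sigma,\Sigma^{L_{DE}},\Sigma^{L_{EF}},\Sigma^{L_{FD}}$ back into Lemma \ref{SplitvsDec} assembles a decomposition. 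The two passages are mutually inverse by construction: a doubly linear section over $D$ is determined by its base maps together with its outer value and its core map, and the compatibility formulas pin $\sigma$ down on the complementary core directions $\widetilde{\phi^F}$, $\widetilde{\phi^E}$.

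The step I expect to be the main obstacle is checking that the core map of $\sigma(\hat c^F,\hat c^E)$ really defines a \emph{linear splitting} of the double vector bundle $L_{EF}$ — that it is a section of $L_{EF}\to K_{AB}$ over the identity and is additive — and, in the opposite direction, that recombining the outer datum $\Sigma$ and the core datum $\Sigma^{L_{EF}}$ through the addition $+_E$ of the double vector bundle $(T;E,F;C)$ produces a genuine doubly linear section and not merely a set-theoretic section of $\pi$. Both verifications rest on the interchange laws in the faces of (\ref{Tcube}) and are the triple-vector-bundle shadow of the inductive interchange-law computations already carried out in the proof of Theorem \ref{thm_split_dec_n}; once they are in hand, the $C^\infty(M)$-linearity and the mutual invertibility of the two constructions are routine.
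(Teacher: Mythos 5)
Your proposal is correct and takes essentially the same route as the paper's own proof: both directions rest on Lemma \ref{SplitvsDec}, your forward construction (defining $\sigma$ on the generators $(\hat c^F,\hat c^E)$, $(\widetilde{\phi^F},0)$, $(0,\widetilde{\phi^E})$ and extending $C^\infty(M)$-linearly) is just a repackaging of the paper's single formula $\sigma(\xi^E,\xi^F)\bigl(\mathcal S^D(a_m,b_m,k_m)\bigr)=\mathcal S\bigl(a_m,b_m,c(m),k_m,\phi^F(a_m),\phi^E(b_m),0^S_m\bigr)$, and your converse recovers $\Sigma$ and $\Sigma^{L_{EF}}$ by evaluating $\sigma(\hat c^F,\hat c^E)$ on $\Sigma^D(A\times_M B)$ and on $K_{AB}\subset D$, which is precisely what the paper does. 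The verifications you flag as the main obstacles (that the core map yields a linear splitting of $L_{EF}$ and that the reassembled section is genuinely doubly linear) are exactly the ones the paper also defers to "straightforward computations" via the interchange laws.
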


\begin{proof}
	A given decomposition $\S$ of $T$ induces decompositions of all the 
	double vector bundles by definition. These are equivalent to linear 
	splittings and horizontal lifts 
	$\sigma^E_C\colon\Gamma(C)\to\Gamma^\ell_B(E)$ and 
	$\sigma^F_C\colon\Gamma(C)\to\Gamma^\ell_A(F)$. Now any two linear sections
	$\xi^E\in\Gamma^\ell_B(E)$ and $\xi^F\in\Gamma^\ell_A(F)$ over the same 
	$c\in\Gamma(C)$ can be written as $\xi^E=\sigma^E_C(c)+\widetilde{\phi^E}$ 
	and $\xi^F=\sigma^F_C(c)+\widetilde{\phi^F}$ for some 
	$\phi^E\in\Gamma(B^*\otimes K_{BC})$ and $\phi^F\in\Gamma(A^*\otimes K_{AC})$. 
	We define a horizontal lift by 
	\[\sigma\bigl(\xi^E,\xi^F\bigr)\bigl(\S^D(a_m,b_m,k_m)\bigr):=
	\S\bigl(a_m,b_m,c(m),k_m,\phi^F(a_m),\phi^E(b_m),0^S_m\bigr)\,.
	\]
	It is easy to check that this lift satisfies the additional compatibility
	conditions. 

	Conversely, given linear splittings of the double vector bundles $D$, $E$,
	$F$, $L_{DE}$, $L_{FD}$ and a horizontal lift $\sigma$ satisfying the
	extra condition, we first define a linear splitting
	$\Sigma^{L_{EF}}\colon C\times_M K_{AB}\to L_{EF}$ 
	by 
	$\Sigma^{L_{EF}}(c_m,k_{AB}):=\sigma(\sigma_C^F(c),\sigma_C^E(c))(k_{AB})$
        for any section $c$ of $C\to M$ with $c(m)=c_m$, and 
	where we view $K_{AB}$ as a subset of $D$. 
	Then we define a linear splitting of $T$ by 
	\[
	\Sigma(a_m,b_m,c_m):=\sigma\bigl(\sigma^E_C(c),\sigma^F_C(c)\bigr)(\Sigma^D(a_m,b_m))
	\]
	where $c\in\Gamma(C)$ is any section such that $c(m)=c_m$.  Together
	with Lemma \ref{SplitvsDec} this gives a decomposition of $T$.
	
	Straightforward computations show that these two constructions are 
	indeed inverse to each other and we get the desired equivalence. 	
\end{proof}

\medskip

The analogon of Proposition \ref{PropSESMod} for general $n$ is easy
to write down and prove \cite{Heuer19}, but Proposition
\ref{eq_lift_dec} becomes highly technical for increasing $n$. It is
relatively easy to see that a horizontal lift defines a linear
splitting of the $n$-fold vector bundle, and conversely that a
decomposition of an $n$-fold vector bundle defines a horizontal
lift. However, as the additional conditions in Proposition
\ref{eq_lift_dec} and in Theorem \ref{thm_split_dec_n} suggest, the
formulation of equivalent constructions is not straightforward. This
is work in preparation \cite{Heuer19}.


\def\cprime{$'$} \def\polhk#1{\setbox0=\hbox{#1}{\ooalign{\hidewidth
  \lower1.5ex\hbox{`}\hidewidth\crcr\unhbox0}}} \def\cprime{$'$}
  \def\cprime{$'$}

\end{document}